\theoremstyle{plain}
\newtheorem{theorem}{Theorem}
\newtheorem{lemma}[theorem]{Lemma}
\newtheorem{proposition}[theorem]{Proposition}
\newtheorem{corollary}[theorem]{Corollary}
\theoremstyle{definition}
\theoremstyle{remark}
\newtheorem{remark}[theorem]{Remark}
\renewcommand{\O}{\Omega}
\newcommand{\R}{\mathbb{R}}
\newcommand{\N}{\mathbb{N}}
\renewcommand{\O}{\Omega}
\newcommand{\Od}{\Omega_{\delta}}
\newcommand{\Gnl}{\Gamma^{\text{nl}}}
\newcommand{\dx}{\,\mathrm{d}x}
\newcommand{\cD}{\mathcal{A}}
\newcommand{\cK}{\mathcal{K}}
\newcommand{\cU}{\mathcal{U}}
\newcommand{\cS}{\mathcal{S}}
\newcommand{\comploc}{\mathfrak{c}}
\newcommand{\compnl}{\mathfrak{c}_\delta}
\DeclareMathOperator{\supp}{supp}
\DeclareMathOperator{\diver}{div}
\DeclareMathOperator*{\minz}{minimize}
\DeclareMathOperator*{\argmin}{arg\,min}
\DeclareMathOperator{\diam}{diam}
\DeclareMathOperator{\cl}{closure}
\begin{document}

%\title{A nonlocal optimal design problem: well-posedness, convergence to a local limit problem and numerical approximation}

\title{Non-local control in the conduction coefficients: \\
well posedness and convergence to the local limit}

\author{Anton Evgrafov\textsuperscript{1,*} and Jos\'e C. Bellido\textsuperscript{2}
\bigskip
\\
\small
1 Department of Mechanical Engineering,
Technical University of Denmark
\\
\small
2 Department of Mathematics,
University of Castilla-La Mancha
\bigskip
\\
\small
* aaev@mek.dtu.dk}

\maketitle

\noindent  \paragraph*{Abstract:} We consider a problem of optimal distribution of conductivities in a system governed by a non-local diffusion law.  The problem stems from applications in optimal design and more specifically topology optimization.  We propose a novel parametrization of non-local material properties.  With this parametrization the non-local diffusion law in the limit of vanishing non-local interaction horizons converges to the famous and ubiquitously used generalized Laplacian with SIMP (Solid Isotropic Material with Penalization) material model.  The optimal control problem for the limiting local model is typically ill-posed and does not attain its infimum without additional regularization.  Surprisingly, its non-local counterpart attains its global minima in many practical situations, as we demonstrate in this work.  In spite of this qualitatively different behaviour, we are able to partially characterize the relationship between the non-local and the local optimal control problems.  We also complement our theoretical findings with numerical examples, which illustrate the viability of our approach to optimal design practitioners.

\noindent  \paragraph*{Keywords:} Nonlocal optimal design, nonlocal optimal control in the coefficients, convergence to local problems, numerical approximation of nonlocal problems.

% !TEX root = NonlocalOD.tex
%

%
\section{Introduction}
\label{sec:intro}

Nonlocal problems receive a lot of attention nowadays owing to a wide range of
applications they have in a variety of contexts. In particular, we mention micromechanics~\cite{Rogers91}, image processing~\cite{Gios08}, phase transitions~\cite{AlBe98}, pattern formation~\cite{Fife03}, population dispersal~\cite{CoCoElMa07}, optimal design~\cite{andres2015nonlocal} and shape optimization~\cite{bonder}, optimal control~\cite{Delia1} and inverse problems \cite{Delia2}; see also a very recent monograph on the subject~\cite{Du_book}. Among nonlocal problems, fractional or nonlocal diffusion plays a central role.  It has attracted enormous interest and a great deal of work has been done over the past twenty years.
The number of references on the subject and connections of nonlocal diffusion with
remarkable applications is really overwhelming. We refer the interested readers to the monographs~\cite{Mazon_book, Valdinoci_book} and the references therein.
In the context of continuum mechanics the non-local modelling goes back at least to the Eringen's model~\cite{eringen,MMS_2019},
and more recently has been focused on peridynamical modelling~\cite{kunin1975,silling2000reformulation,Peridynamics_book,Handbook,EmWe07, MengeshaDu}.
These latter models refrain from using the gradients of the state fields with the goal of
unified description of singular phenomena, such as fracture or dislocation.
When considering scalar equations, such as for example the steady state heat equation, peridynamics
equations may essentially be seen as a nonlocal diffusion equations on bounded domains~\cite{AkMe10,andres2015type}.

In this paper we consider a prototypical optimal design problem for diffusion
phenomena, in which one has to determine the best way of distributing
conducting materials inside a given computational domain.
For models governed by the local diffusion phenomena such problems have been studied for a long time
and are quite well understood, see for example~\cite{allaire2012shape,bendose2003topology,Pablo_book} and references therein.
As a very rough summary we can say that these problems are typically ill-posed and do not attain their infimum.
One possible way of dealing with this issue is to bring the limits of minimizing sequences into consideration and interpret them as composite materials obtained from mixing the original materials in the sense described by the theory of homogenization.
Another option is to restrict the set of considered material distributions by introducing constraints or penalty functions with regularizing effect.
An extremely successful example of the latter approach, which is nowadays widely utilized in engineering
practice, is the SIMP (Solid Isotropic Material with Penalization) material parametrization model combined
with additional regularization techniques~\cite{bendose2003topology,bendsoe1988generating,bourdin2001filters,sigmund1997design}.
An interesting recent study, which can be interpreted in the light of comparing the two outlined approaches
in the context of the steady state heat conduction, is~\cite{Suna}.

% The most famous example of the latter approach is t
% The latter approach has been successfully utilized in the
% Whereas the problem
% In the local framework, optimal design in conduction is nowadays a well understood topic, being the homogenization method the most popular and successful approach. An essential reference regarding the application of homogenization method to optimal design problems is \cite{allaire2012shape}. Another very popular way to deal with optimal design problems, mainly focused on looking for black and white designs (avoiding thus microstructural solutions as those provided by the homogenization method) through numerical algorithms is the topology optimization method, where SIMP (solid isotropic material with interpolation) approach plays a central role \cite{bendose2003topology}. A recent interesting comparison on the effectiveness of the topology optimization and homogenization methods is \cite{Suna}. For optimal design problems for scalar equations like this one, another interesting and successful approach based on vector variational problems was also recently developed \cite{Pablo_book}.% succeeding even in dealing with problems with non-weakly continuous cost functionals \cite{}, or problems involving pointwise constraints on the  design and state variables \cite{bellido1,bellido2}.

Research in optimal design of systems described by the non-local governing equations is in its infancy.
Our present model is inspired by the recent studies~\cite{andres2015nonlocal, andres2017convergence},
where a very similar optimal design problem for a nonlocal diffusion state law was analyzed.
The main novelty of our work is the way in which the material properties (conductivities) enter into the model. We chose a nonlocal interpolation of material properties that allows us to establish a natural link between the nonlocal optimal design problem and a local one in which material properties are interpolated by the SIMP scheme. Our main objective is to analyze this \emph{nonlocal} optimal design problem and to characterize its relationship with the local optimal design problem as the nonlocal parameter (the \emph{interaction horizon}) goes to zero.
We have documented our preliminary findings in a brief note~\cite{eb2019sensitivity}, where we have been primarily
concerned with the qualitative relation between our non-local model and a particular heuristic
regularization method (``sensitivity filtering'', see~\cite{sigmund1997design}) for the local model with SIMP.
This work includes the technical results, proofs, and numerical experiments, which have been omitted
from~\cite{eb2019sensitivity} owing to the space requirements.

The outline of the paper is as follows.
In Section~\ref{sec:model} we formulate the non-local state equations and discuss their well-posedness.
We also state the nonlocal optimal design problem, which is going to be the main subject of our study, as well as its local counterpart.
Section~\ref{sec:model} also includes Theorem~\ref{thm:dsgn2state}, an interesting result
illustrating the continuity of the conductivity-to-state map for the non-local problem.
In Section~\ref{sec:exist} we include several existence results for the nonlocal optimal design problem depending on the SIMP penalization parameter.
In Section~\ref{sec:gamma} we address the convergence of the nonlocal problems to the local one and the connection of our proposed model with SIMP in the local case.
Finally, in Section~\ref{sec:num}, a numerical approach to the nonlocal optimal design problems is described and several numerical examples are presented.

%% FIXIT

%% FIXIT

% !TEX root = NonlocalOD.tex
%

%
\section{Problem formulation and perliminaries}
\label{sec:model}

\subsection{Local control in the conduction coefficients}
\label{subsec:locprob}
We begin our discussion with a well understood optimal control problem in the
conduction coefficients of generalized Laplace equation, or topology optimization
through material distribution, see for example~\cite{cea1970example,allaire2012shape,bendose2003topology}
and references therein.
This problem will serve us both as a reference physical model and also as a limiting problem
later on.

Let \(\O \subset \R^n\), \(n\ge 2\), be an open, bounded, and connected domain.
In this domain we consider the generalized Laplace equation with a volumetric source
\(f \in L^2(\O)\), homogeneous Dirichlet boundary conditions, and a spatially heterogeneous
diffusion (conduction) coefficient \(\kappa^{\text{loc}}\).
Its weak solution \(u \in \cU_0 = W^{1,2}_0(\O)\) satisfies the variational statement
\begin{equation}\label{eq:w_loc}
  a_{\kappa^{\text{loc}}}(u,v) = \ell(v), \qquad \forall \: v \in \cU_0,
\end{equation}
where the parametric bilinear form \(a_{\kappa^{\text{loc}}}\) and the linear functional
\(\ell\) are given by
\begin{equation}\label{eq:al_loc}
  \begin{aligned}
    a_{\kappa^{\text{loc}}}(u,v) &= \int_{\O} \kappa^{\text{loc}}(x) \nabla u(x)\cdot \nabla v(x)\dx,\quad \text{and}\\
    \ell(v) &= \int_{\O} f(x)v(x)\dx.
  \end{aligned}
\end{equation}
We recall that
\begin{equation}\label{eq:loc_dir_pple}
  u \text{\ solves~\eqref{eq:w_loc}} \quad\iff\quad
  u = \argmin_{v \in \cU_0} I_{\kappa^{\text{loc}}}(v) := \bigg[\frac{1}{2} a_{\kappa^{\text{loc}}}(v,v) - \ell(v)\bigg],
\end{equation}
where \(I_{\kappa^{\text{loc}}}\) is an associated quadratic energy functional.
We will utilize the shorthand notation \(\cS^{\text{loc}}(\kappa^{\text{loc}})
:= \argmin_{v \in \cU_0} I_{\kappa^{\text{loc}}}(v)\) for the control coefficient-to-state
operator for this system.

In connection with this governing equation we consider a problem of optimal distribution
of conductive material in \(\O\) under simple constraints.
Specifically, we will define the following convex set of admissible material distributions:
\begin{equation}\label{eq:adm_loc}
  \cD = \bigg\{\, \rho \in L^\infty(\O) \mid \underline{\rho}\le
  \rho(\cdot)\le \overline{\rho}, \text{a.e.\ in \(\O\)},
  \int_{\O} \rho(x)\dx \le \gamma|\Omega| \,\bigg\},
\end{equation}
where \(\gamma \in ]\underline{\rho},\overline{\rho}[\) is a given volume
fraction, and \(0 < \underline{\rho} < \underline{\rho} < \infty\) are given bounds.\footnote{%
Our setup can be easily generalized to the situation when
\(\underline{\rho}, \overline{\rho} \in L^\infty(\O)\).}
We assume that the local material conductivity \(\kappa^{\text{loc}}\) is related
to the control parameter \(\rho \in \cD^\delta\) through the so-called
SIMP (Simple Interpolated Material with Penalization) model, see~\cite{bendose2003topology,allaire2012shape}:
\(\kappa^{\text{loc}}(x) = \rho^p(x)\), where \(p \ge 1\) plays
a role of a penalty parameter in certain optimal design problems.
Each \(\rho\in \cD^\delta\) is therefore mapped into \(\kappa^{\text{loc}}\)
satisfying the bounds
\(\underline{\kappa} = \underline{\rho}^p \le \kappa^{\text{loc}} \le \overline{\kappa} = \overline{\rho}^p\).

For a given performance functional \(J: \cD \times \cU_0 \to \R\) we consider
the following optimization problem:
\begin{equation}\label{eq:min_loc}
  \minz_{\rho \in \cD} J(\rho,\cS^{\text{loc}}(\rho^{p})).
\end{equation}
Note that the use of ``\(\minz\)'' instead of ``\(\inf\)'' does not automatically mean
that the infimum is attained in~\eqref{eq:min_loc}.
In fact, in most interesting situations this is not the case without additional
regularization of the problem~\eqref{eq:min_loc}.
We will return to this issue in Section~\ref{sec:exist}.

Of particular interest to us will be the case of \(J(\rho,u) = \ell(u)\),
corresponding to \emph{compliance minimization}.
Note that in this case the \emph{reduced} compliance can be expressed
in a variety of ways:
\begin{equation}
  \comploc(\rho) := \ell(\cS^{\text{loc}}(\rho^{p})) =
  a_{\rho^p}(\cS^{\text{loc}}(\rho^{p}),\cS^{\text{loc}}(\rho^{p}))
  = -2 I_{\rho^p}(\cS^{\text{loc}}(\rho^{p})).
\end{equation}
The last expression in conjunction with~\eqref{eq:loc_dir_pple} allows us
to state the compliance minimization problem as a saddle point problem, which
will be used later on.
\subsection{Non-local state equation}
We will now introduce a non-local analogue of the local governing equations~\eqref{eq:w_loc}, \eqref{eq:al_loc}.
We will use \(\delta>0\) to denote the non-local interaction horizon,
and \(B(x,\delta) = \{\, y \in \R^n : |x-y| < \delta \,\}\) to denote an open ball
of radius \(\delta\) centered at \(x \in \R^n\).
Let \(\Od = \cup_{x \in \O} B(x,\delta)\) be the set of points located within distance \(\delta\)
from points in \(\O\), and \(\Gnl = \Od\setminus\O\) be the ``non-local boundary'' of \(\O\).
We consider nonlocal linear diffusion equations, that in this scalar framework can also be seen as peridynamics equations.
Similarly to~\eqref{eq:w_loc}, they will be formulated with the help of the following parametric bilinear form
\(a_{\delta,\kappa}(\cdot,\cdot)\):
\begin{equation}\label{eq:al}
  \begin{aligned}
    a_{\delta,\kappa}(u,v) &=
    \int_{\Od}\int_{\Od} \kappa(x,x')
    A_{\delta}(|x-x'|)\frac{u(x)-u(x')}{|x-x'|}\frac{v(x)-v(x')}{|x-x'|}\dx\dx',
  \end{aligned}
\end{equation}
while we use the same linear functional \(\ell(\cdot)\) defined in~\eqref{eq:al_loc}.
In the equations above, \(\kappa \in L^\infty(\Od\times\Od)\) is a ``nonlocal conductivity'', and
\(A_{\delta}:\R_+\to \R_+\) is a radial kernel satisfying certain conditions.
More precisely, we assume that
\begin{equation}\label{eq:adm_kappa}
  \kappa \in \cK^\delta = \{\, \tilde{\kappa} \in L^\infty(\Od\times\Od)
  \mid \underline{\kappa}\le
  \tilde{\kappa}(x,x')=\tilde{\kappa}(x',x) \le \overline{\kappa}, \forall x,x'\in \Od \,\},
\end{equation}
for some \(0< \underline{\kappa} < \overline{\kappa} < +\infty\)\footnote{%
Dependence on \(\delta\) in the definition of the set \(\cK^\delta\) could be
omitted by considering nonlocal conductivities defined on the whole space
\(\R^n\times\R^n\).%
}, and that \(A_\delta\) satisfies the following conditions:
\begin{equation}\label{eq:kernelnrm}
  \frac{1}{n}\int_{\R^n} A_\delta(|x|)\dx = 1,
\end{equation}
\begin{equation}\label{eq:smallsupport}
   \supp A_\delta(|\cdot|) \subset B(0,\delta), \quad \forall \; \delta>0,
\end{equation}
and there exists \(s \in (0,1)\) and \(c_{\delta}>0\) such that
\begin{equation}\label{fractionallb}
    A_\delta(|x|) \ge \frac{c_{\delta}}{|x|^{n+2s-2}}, \quad\forall \; x \in B(0,\delta/2) \setminus\{0\},
    \delta>0.
\end{equation}

Let \(\cU^\delta = \{\, u \in L^2(\Od) \mid a_{\delta,1}(u,u) < +\infty\,\}\),
and \(\cU_0^\delta\) be the topological closure of \(C^\infty_c(\O)\)
(where we extend the functions in \(C^\infty_c(\O)\) by \(0\) outside of \(\O\))
in \(\cU_\delta\) with respect to the semi-inner product \(a_{\delta,1}(\cdot,\cdot)\).
%%, i.e. the set of functions \(u\) such that there exists a sequence \(u_j\in C^\infty_0(\O)\) such that
%%\[a_{\delta,1}(u_j-u,u_j-u)\rightarrow 0,\mbox{  as }j\rightarrow +\infty.\]
Since the nonlocal conductivities are uniformly bounded from above and away from zero,
for any \(\kappa \in \mathcal{K}^\delta\) and \(u \in \cU^\delta\) we have the estimates
\begin{equation*}
  \underline{\kappa}a_{\delta,1}(u,u) \le a_{\delta,\kappa}(u,u) \le \overline{\kappa}a_{\delta,1}(u,u).
\end{equation*}
Consequently, \(u\in {\cU}^\delta\) if and only if \(\|u\|_{L^2(\Od)} < +\infty\)
and \(a_{\delta,\kappa}(u,u)<+\infty\).
In this notation, the non-local governing equations we will study can be
stated as follows: find \(u\in \cU_0^\delta\) such that
\begin{equation}\label{eq:w}
  a_{\delta,\kappa}(u,v) = \ell(v), \quad \forall v \in \cU_0^\delta.
\end{equation}

\subsection{Non-local control in the conduction coefficients}
We will now introduce a non-local analogue of~\eqref{eq:min_loc},
where the local governing equations~\eqref{eq:w_loc}, \eqref{eq:al_loc} are replaced
with their non-local analogue introduced in the previous section.
Similarly to~\eqref{eq:adm_loc}, we define the following convex set of admissible
material distributions:
\begin{equation}\label{eq:adm}
  \cD^\delta = \bigg\{\, \rho \in L^\infty(\Od) \mid \underline{\rho}\le
  \rho(\cdot)\le \overline{\rho}, \text{a.e.\ in \(\Od\)},
  \int_{\O} \rho(x)\dx \le \gamma|\Omega| \,\bigg\},
\end{equation}
where the parameters are exactly as in~\eqref{eq:adm_loc}.
We still assume that the local material conductivity \(\kappa^{\text{loc}}\) satisfies
the SIMP model \(\kappa^{\text{loc}}(x) = \rho^p(x)\), \(x\in\Od\).
In addition, we assume that the non-local conductivity \(\kappa(x,x')\) entering~\eqref{eq:w}
is simply a geometric mean of \(\kappa^{\text{loc}}(x)\) and \(\kappa^{\text{loc}}(x')\),
that is,
\(\kappa(x,x') = \sqrt{\kappa^{\text{loc}}(x)\kappa^{\text{loc}}(x')} = \rho^{p/2}(x)\rho^{p/2}(x')\).
For a given performance functional \(J: \cD^\delta \times \cU^\delta \to \R\) we consider
the following optimization problem:
\begin{equation}\label{eq:min}
  \begin{aligned}
    &\minz_{(\rho,u) \in \cD^\delta\times\cU_0^\delta} J(\rho,u),\\
    &\text{subject to \(u\) solving~\eqref{eq:w} with \(\kappa(x,x') = \rho^{p/2}(x)\rho^{p/2}(x')\)}.
  \end{aligned}
\end{equation}
We will still refer to the case \(J(\rho,u) = \ell(u)\) as the \emph{compliance minimization}
problem.
\begin{remark}
  Essentially, the only difference between the non-local optimal design problem
  considered here and the one in~\cite{andres2015nonlocal} is the way the
  control variables enter the bilinear form.
  In order to preserve the symmetry of the form with respect to \((x,x')\)
  one takes an average between the local conductivities at \(x\) and \(x'\) to be
  the non-local conductivity in the bilinear form defining the non-local governing
  equations.
  The authors of~\cite{andres2015nonlocal} take an arithmetic mean, whereas
  we use a geometric mean (for \(p=1\)) for the same purpose.
  Advantages of this choice will become clear when we discuss the relationship
  between the nonlocal problem described in this section and the incredibly
  popular and successful SIMP method for (local) optimal design outlined in Section~\ref{subsec:locprob}.
\end{remark}

\subsection{Well-posedness and continuity of the conductivity-to-state operator}
In this subsection we recall a couple of known results about the non-local equation~\eqref{eq:w}.

Under the assumptions~\eqref{eq:kernelnrm}--\eqref{fractionallb},
\(a_{\delta,1}(\cdot,\cdot)\) defines an inner product on \(\mathcal{U}_0^\delta\)
thereby making it a Hilbert space~\cite{andres2015type}.
We denote by \(\|\cdot \|_{\cU_0^\delta}\) the norm induced by this inner product.
Furthermore, owing to~\eqref{fractionallb}, there are constants
\(\tilde{c}_{\delta}>0\), \(\hat{c}_{\delta}>0\) independent from \(u\)
such that for any \(\delta>0\), \(\kappa \in \cK^\delta\) and
\(u\in \mathcal{U}_0^\delta\) we have the inequalities
\begin{equation}\label{eq:L2coerc}
  \begin{aligned}
    a_{\delta,\kappa}(u,u)
    &\ge
    c_{\delta_0} \underline{\kappa}\int_{\Od} \int_{\Od\cap B(x,\delta/2)} \frac{(u(x)-u(x'))^2}{|x-x'|^{n+2s}} \dx'\dx \ge
    \tilde{c}_{\delta} \underline{\kappa}\int_{\Od} \int_{\Od} \frac{(u(x)-u(x'))^2}{|x-x'|^{n+2s}}\dx'\dx
    \\&\ge
    \hat{c}_{\delta} \underline{\kappa}\|u\|^2_{L^2(\O)},
  \end{aligned}
\end{equation}
where the second inequality is a consequence of \cite[Proposition~6.1]{bellido2014} and
the third inequality is established in~\cite[Lemma~6.2]{bellido2014}.
When combined with Cauchy--Bunyakovsky--Schwarz inequality, \eqref{eq:L2coerc} implies that
\(\ell: \cU_0^\delta \to \R\) is a continuous linear functional with respect to $\| \cdot\|_{{\cU}_0^\delta}$.
Consequently we have verified the necessary assumptions for the application of Lax--Milgram Lemma
(see, for example, \cite{brezis}) allowing us to conclude the following.
\begin{theorem}\label{thm:existence}
  For each \(\delta \in ]0,\delta_0[\), \(\kappa \in \cK^\delta\) and \(f \in L^2(\O)\) there is
  a unique solution \(u \in \cU_0^\delta\) of~\eqref{eq:w}.
  Furthermore, this solution minimizes the associated quadratic energy functional:
  \begin{equation}\label{eq:nl_dir_pple}
    u = \argmin_{v \in \cU_0^\delta} I_{\delta,\kappa}(v):=
    \bigg[
    \frac{1}{2}a_{\delta,\kappa}(v,v) - \ell(v)
    \bigg].
  \end{equation}
\end{theorem}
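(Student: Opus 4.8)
The plan is to verify the hypotheses of the Lax--Milgram lemma on the Hilbert space \(\cU_0^\delta\) equipped with the inner product \(a_{\delta,1}(\cdot,\cdot)\), and then to exploit the symmetry of \(a_{\delta,\kappa}\) to deduce the variational characterisation~\eqref{eq:nl_dir_pple}. Most of the analytic work has in fact already been done in the paragraph preceding the statement, so the proof is largely a matter of assembling those pieces.

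First I would record that \(a_{\delta,\kappa}\) is a symmetric bilinear form on \(\cU_0^\delta\times\cU_0^\delta\): bilinearity is immediate from~\eqref{eq:al}, and symmetry follows from the symmetry \(\kappa(x,x')=\kappa(x',x)\) built into the definition of \(\cK^\delta\) together with the symmetry of the kernel factor \(A_\delta(|x-x'|)\,|x-x'|^{-2}\). The two-sided estimate \(\underline{\kappa}\,a_{\delta,1}(u,u)\le a_{\delta,\kappa}(u,u)\le\overline{\kappa}\,a_{\delta,1}(u,u)\) recorded above gives coercivity, \(a_{\delta,\kappa}(u,u)\ge\underline{\kappa}\,\|u\|_{\cU_0^\delta}^2\). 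Since \(\kappa\ge\underline{\kappa}>0\) and \(A_\delta\ge 0\), the form \(a_{\delta,\kappa}\) is positive semidefinite, so the Cauchy--Bunyakovsky--Schwarz inequality applies to it and, combined with the upper bound, yields boundedness off the diagonal: \(|a_{\delta,\kappa}(u,v)|\le a_{\delta,\kappa}(u,u)^{1/2}a_{\delta,\kappa}(v,v)^{1/2}\le\overline{\kappa}\,\|u\|_{\cU_0^\delta}\|v\|_{\cU_0^\delta}\). Continuity of \(\ell\) on \(\cU_0^\delta\) has already been established via~\eqref{eq:L2coerc} and Cauchy--Bunyakovsky--Schwarz. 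The Lax--Milgram lemma then produces a unique \(u\in\cU_0^\delta\) solving~\eqref{eq:w}.

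For the energy characterisation I would use the algebraic identity
\[
  I_{\delta,\kappa}(u+w)=I_{\delta,\kappa}(u)+\big[a_{\delta,\kappa}(u,w)-\ell(w)\big]+\tfrac12 a_{\delta,\kappa}(w,w),
\]
valid for all \(u,w\in\cU_0^\delta\) by bilinearity and symmetry. If \(u\) solves~\eqref{eq:w}, the bracketed term vanishes for every \(w\), and \(\tfrac12 a_{\delta,\kappa}(w,w)>0\) whenever \(w\neq 0\) by coercivity, so \(u\) is the unique global minimiser of \(I_{\delta,\kappa}\); conversely, a minimiser makes the first variation \(w\mapsto a_{\delta,\kappa}(u,w)-\ell(w)\) vanish, which is exactly~\eqref{eq:w}.

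I do not expect a genuine obstacle here: the substantive facts --- that \((\cU_0^\delta,a_{\delta,1})\) is a Hilbert space, the fractional Poincar\'e-type inequality~\eqref{eq:L2coerc}, and the consequent continuity of \(\ell\) --- are imported from~\cite{andres2015type,bellido2014} or verified just above. The one point that warrants a moment's care is that the upper bound in \(\underline{\kappa}\,a_{\delta,1}\le a_{\delta,\kappa}\le\overline{\kappa}\,a_{\delta,1}\), stated on the diagonal, upgrades to honest boundedness of the bilinear form; this is precisely the Cauchy--Schwarz step above and uses only that \(a_{\delta,\kappa}\) is a positive semidefinite symmetric form.
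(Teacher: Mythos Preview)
Your proposal is correct and follows precisely the approach taken in the paper: the paragraph immediately preceding the theorem assembles the Lax--Milgram hypotheses (Hilbert structure on \(\cU_0^\delta\), coercivity via the two-sided bound, continuity of \(\ell\) via~\eqref{eq:L2coerc}) and then invokes the lemma, with the variational characterisation being the standard consequence of symmetry. You are slightly more explicit than the paper in spelling out the off-diagonal boundedness of \(a_{\delta,\kappa}\) via Cauchy--Schwarz and in writing out the quadratic identity for \(I_{\delta,\kappa}(u+w)\), but these are routine steps that the paper leaves implicit.
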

This well-posedness result allows us to univocally define the coefficient-to-state operator
\(\cS_\delta(\kappa):=\argmin_{v \in \cU_0^\delta} I_{\delta,\kappa}(v)\), exactly as in the
local case.
Furthermore, we put the \emph{reduced} non-local compliance to be
\begin{equation}
  \begin{aligned}
  \compnl(\rho) &:= \ell(\cS_\delta(\kappa)) =
  a_{\kappa}(\cS_\delta(\kappa),\cS_\delta(\kappa)) = -2 I_{\delta,\kappa}(\cS_\delta(\kappa)),
\end{aligned}
\end{equation}
where \(\kappa(x,x') = \rho^{p/2}(x)\rho^{p/2}(x')\).

Nonlocal diffusion and peridynamics equations have been studied extensively.
Existence results for a much more general class of nonlinear and nonlocal variational principles
have been obtained in~\cite{bellido2014}.
Particularly, well-posedness of equation~\eqref{eq:w} has been shown
in~\cite[Theorem~1.2]{andres2015type} for slightly more general conditions on the kernel \(A_\delta\).
More specifically, hypothesis~\eqref{fractionallb} could be further relaxed without sacrificing
the conclusions of Theorem~\ref{thm:existence}.
Even though our assumption~\eqref{fractionallb} is not optimal if one is only concerned with solvability of
the non-local governing equations, it is convenient for us since it implies the compact embedding
of \(\mathcal{U}_0^\delta\) into \(L^2(\Od)\), and ultimately a certain continuity
of the coefficient-to-state operator for the non-local equation~\eqref{eq:w}.

Indeed, let us revisit the string of inequalities~\eqref{eq:L2coerc}.
Note that the third term in~\eqref{eq:L2coerc} is nothing else but the Gagliardo
seminorm \(|u|_{W^{s,2}(\Od)}^2\).
Therefore, in addition to the continuous embedding of \(\mathcal{U}_0^\delta\) into
\(L^2(\Od)\), we also have a continuous embedding of \(\mathcal{U}_0^\delta\) into
the fractional Sobolev space \(W^{s,2}(\Od)\).
Since the latter space is compactly embedded into \(L^2(\Od)\)
(see~\cite[Theorem~7.1]{di2012hitchhikers}),
we also have a compact embedding of \(\mathcal{U}_0^\delta\) into  \(L^2(\Od)\).

Let us now discuss the conductivity-to-state operator for the non-local equation~\eqref{eq:w}.
In the case of the classical, local diffusion (generalized Laplace) equation,
the conductivity-to-state operator \(\cS^{\text{loc}}\) is famously not
continuous with respect to weak\(^*\) convergence of conductivities in \(\cK^\delta\).
In fact a stronger \(H\)-convergence, or \(G\)-convergence in our self-adjoint case,
has been specifically defined to obtain such a result~\cite{Murat1,Murat2, spagnolo1967sul,Tartar1,Tartar2}.
The notion of \(H\)-convergence has been recently extended to the nonlocal \(p\)-Laplacian
in~\cite{bonder_Hconv}, proving its sequential compactness for bounded coefficients.
In stark contrast, the conductivity-to-state operator \(\cS\) for the non-local
equation~\eqref{eq:w} is continuous with respect to weak\(^*\) convergence of conductivities
in \(\cK^\delta\).
This fact has been established in~\cite[Theorem~6]{andres2015nonlocal}.
For the sake of completeness and clarity we include a simpler proof of this result, which is in the same spirit as the simple characterization of $H$-convergence for fractional $p$-Laplacian equations given by the authors in~\cite{H-convergence}.
\begin{theorem}\label{thm:dsgn2state}
 Let us fix \(\delta \in ]0,\delta_0[\) and \(f \in L^2(\O)\).
 Consider a sequence of conductivities \(\kappa_j \in \cK^\delta\), together
 with the corresponding sequence \(u_j = \cS_\delta(\kappa_j) \in \cU_0^\delta\)
 of solutions to~\eqref{eq:w}.
 Assume that \(\kappa_j \rightharpoonup \hat{\kappa}\), weak\(^*\) in
 \(L^\infty(\Od\times\Od)\), and let \(\hat{u} = \cS(\hat{\kappa}) \in \cU_0^\delta\)
 be the corresponding solution to~\eqref{eq:w}.
 Then, we have
 \(\lim_{j\to\infty} \|u_{j}-\hat{u}\|_{L^2(\Od)} =\lim_{j\to\infty} \|u_{j}-\hat{u}\|_{\cU_0^\delta} = 0\).
\end{theorem}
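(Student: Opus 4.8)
The plan is to prove convergence by combining the uniform coercivity estimate~\eqref{eq:L2coerc}, the compact embedding of $\cU_0^\delta$ into $L^2(\Od)$, and the weak$^*$ convergence of the coefficients, in a standard but careful way. First I would establish that the sequence $(u_j)$ is bounded in $\cU_0^\delta$: testing the weak equation~\eqref{eq:w} for $u_j$ against $v = u_j$ gives $a_{\delta,\kappa_j}(u_j,u_j) = \ell(u_j)$, and the lower bound in~\eqref{eq:L2coerc} together with the continuity of $\ell$ with respect to $\|\cdot\|_{\cU_0^\delta}$ yields $\underline{\kappa}\,\|u_j\|_{\cU_0^\delta}^2 \le a_{\delta,\kappa_j}(u_j,u_j) = \ell(u_j) \le C\|u_j\|_{\cU_0^\delta}$, hence $\|u_j\|_{\cU_0^\delta}\le C/\underline{\kappa}$. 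Since $\cU_0^\delta$ is a Hilbert space compactly embedded into $L^2(\Od)$, after passing to a subsequence (not relabelled) we may assume $u_j \rightharpoonup \tilde u$ weakly in $\cU_0^\delta$ and $u_j \to \tilde u$ strongly in $L^2(\Od)$.

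Next I would identify the weak limit $\tilde u$ as $\hat u = \cS_\delta(\hat\kappa)$. The key observation is that for fixed $v \in \cU_0^\delta$, the product $A_\delta(|x-x'|)\,\frac{v(x)-v(x')}{|x-x'|}\cdot\frac{u_j(x)-u_j(x')}{|x-x'|}$ and, more delicately, the difference quotients of $u_j$, must be handled so that $a_{\delta,\kappa_j}(u_j,v) \to a_{\delta,\hat\kappa}(\tilde u,v)$. Here I would argue as follows: the difference quotient map $u \mapsto \big[(x,x')\mapsto A_\delta(|x-x'|)^{1/2}\frac{u(x)-u(x')}{|x-x'|}\big]$ is an isometry (up to the weight) from $\cU_0^\delta$ into $L^2(\Od\times\Od)$, so weak convergence $u_j\rightharpoonup\tilde u$ in $\cU_0^\delta$ transfers to weak convergence of these weighted difference quotients in $L^2(\Od\times\Od)$. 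Meanwhile $\kappa_j \rightharpoonup \hat\kappa$ weak$^*$ in $L^\infty$. A weak-times-weak$^*$ product does not converge in general, but here one factor — the difference quotient of the fixed test function $v$, multiplied by $A_\delta(|x-x'|)^{1/2}$ — is a fixed $L^2(\Od\times\Od)$ function, so I would pass to the limit in $a_{\delta,\kappa_j}(u_j,v) = \int\!\int \kappa_j\,(\text{d.q. of }u_j)\,(\text{d.q. of }v)$ by writing it as the pairing of the weakly convergent sequence $\kappa_j\cdot(\text{weighted d.q. of }v)$ (which converges weak$^*$, hence weakly, in $L^2$ against the fixed function, actually I should pair the other way) against the weighted difference quotient of $u_j$. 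The cleanest route: for fixed $v$, $g_j := \kappa_j(\cdot,\cdot)\,A_\delta(|\cdot-\cdot|)^{1/2}\frac{v(\cdot)-v(\cdot)}{|\cdot-\cdot|}$ converges weakly in $L^2(\Od\times\Od)$ to $g := \hat\kappa\,A_\delta^{1/2}\frac{v(\cdot)-v(\cdot)}{|\cdot-\cdot|}$ (product of weak$^*$-$L^\infty$ and fixed $L^2$), and the weighted difference quotient $w_j$ of $u_j$ converges weakly in $L^2$; but weak$\times$weak still fails. So instead I use strong $L^2(\Od)$ convergence of $u_j$ to pass to the limit in the difference-quotient integral directly: $A_\delta(|x-x'|)^{1/2}\frac{u_j(x)-u_j(x')}{|x-x'|}$ need not converge strongly, so this requires care — this is the main obstacle, addressed below. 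Granting the passage to the limit, $\tilde u$ satisfies $a_{\delta,\hat\kappa}(\tilde u,v) = \ell(v)$ for all $v\in\cU_0^\delta$, whence $\tilde u = \hat u$ by uniqueness (Theorem~\ref{thm:existence}), and since the limit is independent of the subsequence the whole sequence converges.

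The hard part, and the one I would spend the most effort on, is upgrading weak convergence $u_j\rightharpoonup\hat u$ in $\cU_0^\delta$ to strong convergence, i.e. $\|u_j-\hat u\|_{\cU_0^\delta}\to 0$, simultaneously with identifying the limit of $a_{\delta,\kappa_j}(u_j,v)$. The trick I would use is to exploit the energy identity. From $a_{\delta,\kappa_j}(u_j,u_j)=\ell(u_j)$ and strong $L^2$ convergence $u_j\to\hat u$ we get $\ell(u_j)\to\ell(\hat u) = a_{\delta,\hat\kappa}(\hat u,\hat u)$, so $\lim_j a_{\delta,\kappa_j}(u_j,u_j) = a_{\delta,\hat\kappa}(\hat u,\hat u)$. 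Now expand $a_{\delta,\kappa_j}(u_j - \hat u, u_j - \hat u) = a_{\delta,\kappa_j}(u_j,u_j) - 2a_{\delta,\kappa_j}(u_j,\hat u) + a_{\delta,\kappa_j}(\hat u,\hat u)$. For the last two terms the second factor is the fixed function $\hat u$, so $\kappa_j$ times the weighted difference quotient of $\hat u$ converges weakly in $L^2(\Od\times\Od)$ to $\hat\kappa$ times that difference quotient (weak$^*$-$L^\infty$ against fixed $L^2$); pairing against the weighted difference quotient of $\hat u$ (fixed, strong) gives $a_{\delta,\kappa_j}(\hat u,\hat u)\to a_{\delta,\hat\kappa}(\hat u,\hat u)$, and pairing against that of $u_j$ (weakly convergent) gives $a_{\delta,\kappa_j}(u_j,\hat u)\to a_{\delta,\hat\kappa}(\hat u,\hat u)$ — here weak$\times$weak is avoided because $g_j:=\kappa_j\cdot(\text{weighted d.q.\ of }\hat u)$ converges \emph{weakly} (not merely weak$^*$) in $L^2$ and is paired against the weakly convergent d.q.\ of $u_j$; to make this rigorous I note $g_j\to g$ strongly is false, so I instead observe that $a_{\delta,\kappa_j}(u_j,\hat u)$ equals the $L^2$ pairing of the weighted d.q.\ of $u_j$ with $g_j$, and split $g_j = g + (g_j-g)$: the first pairs with the weakly convergent d.q.\ of $u_j$ to give $a_{\delta,\hat\kappa}(\hat u,\hat u)$ in the limit, and for the second I use that $g_j - g \rightharpoonup 0$ weakly in $L^2$ while the d.q.\ of $u_j$ is \emph{bounded} in $L^2$ — still not enough. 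The genuine fix is to test the equation~\eqref{eq:w} for $\hat u$ with $v=u_j$: $a_{\delta,\hat\kappa}(\hat u,u_j)=\ell(u_j)\to\ell(\hat u)$, and to test the equation for $u_j$ with $v=\hat u$: $a_{\delta,\kappa_j}(u_j,\hat u)=\ell(\hat u)$ exactly (since $\ell$ doesn't depend on $j$), giving $a_{\delta,\kappa_j}(u_j,\hat u)=\ell(\hat u)$ for all $j$. Then $a_{\delta,\kappa_j}(u_j-\hat u,u_j-\hat u) = \ell(u_j) - 2\ell(\hat u) + a_{\delta,\kappa_j}(\hat u,\hat u)$, and the remaining term $a_{\delta,\kappa_j}(\hat u,\hat u)\to a_{\delta,\hat\kappa}(\hat u,\hat u)=\ell(\hat u)$ by the weak$^*\times$fixed-$L^2$ argument (now legitimate, both factors involving only the fixed $\hat u$). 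Hence $a_{\delta,\kappa_j}(u_j-\hat u,u_j-\hat u)\to \ell(\hat u)-2\ell(\hat u)+\ell(\hat u)=0$, and by the uniform lower bound $\underline{\kappa}\|u_j-\hat u\|_{\cU_0^\delta}^2\le a_{\delta,\kappa_j}(u_j-\hat u,u_j-\hat u)\to 0$ we conclude $\|u_j-\hat u\|_{\cU_0^\delta}\to 0$, and then also $\|u_j-\hat u\|_{L^2(\Od)}\to 0$ by the continuous embedding. Finally, since the argument shows every subsequence has a further subsequence converging to the same limit $\hat u$, the full sequence converges, completing the proof.
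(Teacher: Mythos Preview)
Your overall strategy is the same as the paper's, and the final expansion
\[
a_{\delta,\kappa_j}(u_j-\hat u,u_j-\hat u)=\ell(u_j)-2\ell(\hat u)+a_{\delta,\kappa_j}(\hat u,\hat u)
\]
(obtained by testing the equation for \(u_j\) with \(u_j\) and with \(\hat u\)) is exactly the closing step in the paper's proof. However, there is a genuine gap: you never establish that the \(L^2\)-limit \(\tilde u\) of the subsequence equals \(\hat u\). In the second paragraph you correctly recognise that passing to the limit in \(a_{\delta,\kappa_j}(u_j,v)\) runs into a weak\(^*\)-\(L^\infty\) times weak-\(L^2\) product, which you cannot handle. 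In the third paragraph you silently replace \(\tilde u\) by \(\hat u\): your claim ``\(\ell(u_j)\to\ell(\hat u)\)'' is precisely the unproved statement \(\tilde u=\hat u\), since the compactness argument only gives \(\ell(u_j)\to\ell(\tilde u)\). With this honest limit the right-hand side of the displayed identity tends to \(\ell(\tilde u-\hat u)\), not to \(0\), and the argument does not close.

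The paper supplies exactly this missing identification by a lower-semicontinuity step. From the subsequence one has \(u_{j''}\to\tilde u\) a.e.\ in \(\Od\), while weak\(^*\) convergence of \(\kappa_j\) in \(L^\infty\) yields setwise convergence of the measures \(\mu_j(E)=\int_E\kappa_j\,\mathrm{d}x\,\mathrm{d}x'\) to \(\hat\mu(E)=\int_E\hat\kappa\,\mathrm{d}x\,\mathrm{d}x'\). A generalised Fatou lemma for nonnegative integrands against such a convergent sequence of measures then gives
\[
\liminf_{j}I_{\delta,\kappa_{j}}(u_{j})\ge I_{\delta,\hat\kappa}(\tilde u)\ge I_{\delta,\hat\kappa}(\hat u),
\]
which, together with the \(\limsup\)-inequality \(\limsup_j I_{\delta,\kappa_j}(u_j)\le I_{\delta,\hat\kappa}(\hat u)\) (your argument already contains this, via minimality of \(u_j\) and the weak\(^*\)-against-fixed-\(L^1\) limit \(I_{\delta,\kappa_j}(\hat u)\to I_{\delta,\hat\kappa}(\hat u)\)), forces equality and hence \(\tilde u=\hat u\). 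Once this is in hand, your final expansion is correct and coincides with the paper's concluding computation.
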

\begin{proof}
  Recall that \(\hat{u}\) is the unique minimizer of \(I_{\delta,\hat{\kappa}}\),
  and \(u_j\) is that of \(I_{\delta,\kappa_j}\).
  Keeping in mind that \(\kappa_j\rightharpoonup \hat{\kappa}\), weak\(^*\) in
  \(L^\infty(\Od\times\Od)\), and the inclusion
  \begin{equation*} A_\delta(|x-x'|)\frac{(\hat{u}(x)-\hat{u}(x'))^2}{|x-x'|^2} \in L^1(\Od\times \Od)\end{equation*}
  we have the inequality
  \begin{equation}\label{eq:conv1}
    \limsup_{{j}\to\infty} I_{\delta,\kappa_{j}}( u_{j}) \le
    \lim_{{j}\to\infty} I_{\delta,\kappa_{j}}(\hat{u})= I_{\delta,\hat{\kappa}}(\hat{u}).
  \end{equation}

  Let us now extract a subsequence \(\{(\kappa_{j'},u_{j'})\}\), \({j'}=1,2,\dots\) from the
  original sequence such that \(\liminf_{j\to\infty} I_{\delta,\kappa_{j}}( u_{j})
  =\lim_{j'\to\infty} I_{\delta,\kappa_{j'}}( u_{j'})\).
  Note that as a direct consequence of~\eqref{eq:L2coerc} we get an uniform estimate
  \(\underline{\kappa}\|u_{j}\|_{\cU_0^\delta}^2 \le
  a_{\delta,\kappa_j}(u_j,u_j) = \ell(u_j) \le \hat{c}_{\delta}^{-1/2}(\overline{\kappa}/\underline{\kappa})^{1/2}\|f\|_{L^2(\O)}
  \|u_{j}\|_{\cU_0^\delta}\), \(j=1,2,\dots\)
  Therefore there exists \(u\in \mathcal{U}_0^\delta\) and a further subsequence,
  labelled by \(\{u_{j''}\}\),
  \(j''=1,2,\dots\), such that
  \begin{equation*}
    u_{j''}\rightharpoonup u, \text{weakly in\ } \mathcal{U}_0^\delta,
    \quad
    u_{j''}\rightarrow u, \text{ strongly in\ } L^2(\Od),
    \text{\ and\ }
    u_{j''}(x)\rightarrow u(x), \text{\ a.e.\ in \(\Od\)}.
  \end{equation*}
Let us define the finite measures
\begin{equation*}
  \begin{aligned}
    \mu_j(E)&=\int_E\kappa_j(x,x')\dx\dx'=\int_{\Od\times\Od}\chi_{E}(x,x')\kappa_j(x,x')\dx\dx',
    \quad j=1,2,\dots \quad\text{and}\\
    \hat{\mu}(E)&=\int_E\hat{\kappa}(x,x')\dx\dx'=\int_{\Od\times\Od}\chi_{E}(x,x')\hat{\kappa}(x,x')\dx\dx',
  \end{aligned}
\end{equation*}
where \(E\subset \Od\times\Od\) is an arbitrary Lebesgue measurable set
and \(\chi_E \in L^1(\Od\times\Od)\) is its characteristic function.
Weak\(^*\) convergence of \(\kappa_j\) to \(\hat{\kappa}\) implies the
strong convergence of measures \(\lim_{j\to\infty} \mu_j = \hat{\mu}\) (i.e. \(\lim_{j\rightarrow \infty} \mu_j(E)=\hat{\mu}(E)\) for any measurable set \(E\subset (\Od\times\Od)\)).
Furthermore, in view of~\eqref{eq:conv1} and the continuity of \(\ell\) the non-negative sequence
\(a_{\delta,\kappa_{j}}(u_{j},u_{j}) =
2 [I_{\delta,\kappa_{j}}(u_{j}) + \ell(u_{j})]\), \({j}=1,2,\dots\)
is bounded from above.
Therefore we can apply the generalized Fatou's lemma \cite[Prop. 17, pg. 269]{royden}:
\begin{equation*}
  \begin{aligned}
  \liminf_{{j}\to\infty} I_{\delta,\kappa_{j}}(u_{j})
  &=
  \lim_{{j''}\to\infty}\bigg[ \frac{1}{2}\int_{\Od\times\Od} A_\delta(|x-x'|)\frac{(u_{j''}(x)-u_{j''}(x'))^2}{|x-x'|^2}
  \,\mathrm{d}\mu_{j''}\bigg] - \lim_{{j''}\to\infty}\ell(u_{j''})
  \\& \ge
  \frac{1}{2}\int_{\Od\times\Od} A_\delta(|x-x'|)\frac{(u(x)-u(x'))^2}{|x-x'|^2}
  \,\mathrm{d}\hat{\mu} - \ell(u)
  =I_{\delta,\hat{\kappa}}(u) \ge I_{\delta,\hat{\kappa}}(\hat{u}).
\end{aligned}
\end{equation*}
In particular, \(\lim_{j\to\infty} I_{\delta,\kappa_j}(u_j)= I_{\delta,\hat{\kappa}}(\hat{u})\).
The strong convergence of \(u_j\) towards \(\hat{u}\) in \(\mathcal{U}\),
and owing to the continuous embedding also in \(L^2(\Od)\), follows
from the already established facts as follows:
\begin{equation*}
  \begin{aligned}
    0\le \limsup_{{j}\to\infty} \underline{\kappa} \|u_{j} - \hat{u}\|^2_{\cU_o^\delta}
    &\le
    \lim_{{j}\to\infty} [
    \underbrace{a_{\delta,\kappa_{j}}(u_{j},u_{j})}_{=-2I_{\delta,\kappa_j}(u_j)}
    \underbrace{-2 a_{\delta,\kappa_{j}}(u_{j},\hat{u})
    + a_{\delta,\kappa_{j}}(\hat{u},\hat{u})}_{=2I_{\delta,\kappa_{j}}(\hat{u})}]
    =
    -2I_{\delta,\hat{\kappa}}(\hat{u}) + 2I_{\delta,\hat{\kappa}}(\hat{u}) = 0.
  \end{aligned}
\end{equation*}
\end{proof}

% !TEX root = NonlocalOD.tex
%

%
\section{Existence of optimal conductivity distributions}
\label{sec:exist}

Existence of solutions for the optimization problem~\eqref{eq:min} in the special case
of compliance minimization, which corresponds to \(J(\rho,u) = \ell(u)\), has been briefly outlined
in~\cite{eb2019sensitivity}.
For the sake of keeping this manuscript self-contained, we include the short proofs
of these results here.
We also establish existence of solutions to~\eqref{eq:min} for more general objective functions,
but for a specific penalization value \(p=2\).

\subsection{Compliance minimization, convex case: \(p=1\)}
\label{subsec:convex}
Let us first consider the convex case of compliance minimization, which is obtained
by setting \(p=1\), \(J(\rho,u) = \ell(u)\) in~\eqref{eq:min}.
The argument appeals to the convexity of the problem\footnote{%
Note that the same arguments apply to the practically uninteresting case
\(0<p<1\) as well.},
see~\cite{cea1970example}.
\begin{proposition}\label{prop:compl_p1}
  The compliance minimization problem~\eqref{eq:min} (that is,
  \(J(\rho,u) = \ell(u)\)), admits an optimal solution
  \((\rho^*,u^*) \in \cD^\delta\times \cU_0^\delta\) for \(p=1\).
\end{proposition}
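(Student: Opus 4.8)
The plan is to exploit the convexity that is special to the case \(p=1\), combined with the continuity of the coefficient-to-state map from Theorem~\ref{thm:dsgn2state}. First I would pass to the reduced formulation: solving~\eqref{eq:min} with \(J(\rho,u)=\ell(u)\) amounts to minimizing \(\rho\mapsto\compnl(\rho)=\ell(\cS_\delta(\kappa))\) over \(\rho\in\cD^\delta\), where \(\kappa(x,x')=\rho^{1/2}(x)\rho^{1/2}(x')\); once a minimizer \(\rho^*\) is found, \(u^*:=\cS_\delta(\kappa^*)\) with \(\kappa^*(x,x')=(\rho^*)^{1/2}(x)(\rho^*)^{1/2}(x')\) gives the optimal pair. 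Using the Dirichlet principle~\eqref{eq:nl_dir_pple} I would rewrite
\[
  \compnl(\rho)=-2\min_{v\in\cU_0^\delta}I_{\delta,\kappa}(v)=\sup_{v\in\cU_0^\delta}\Big[\,2\ell(v)-a_{\delta,\kappa}(v,v)\,\Big].
\]
The key point is that for each fixed \(v\in\cU_0^\delta\) the map \(\rho\mapsto 2\ell(v)-a_{\delta,\kappa}(v,v)\) is convex on \(\cD^\delta\): indeed \(\ell(v)\) does not depend on \(\rho\), while
\[
  a_{\delta,\kappa}(v,v)=\int_{\Od}\int_{\Od}\rho^{1/2}(x)\rho^{1/2}(x')\,A_\delta(|x-x'|)\frac{(v(x)-v(x'))^2}{|x-x'|^2}\dx\dx'
\]
is concave in \(\rho\), being the integral, against the nonnegative weight \(A_\delta(|x-x'|)(v(x)-v(x'))^2/|x-x'|^2\), of the geometric mean \((s,t)\mapsto\sqrt{st}\) — which is concave on \((0,\infty)^2\) by Cauchy--Schwarz — evaluated at \((\rho(x),\rho(x'))\). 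Hence \(\compnl\) is convex on \(\cD^\delta\) as a pointwise supremum of convex functionals. (This is exactly the advantage of the geometric mean over the arithmetic mean.)

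Next I would set up the direct method. The set \(\cD^\delta\) is a nonempty, convex, bounded, and strongly closed subset of \(L^2(\Od)\) (the pointwise bounds pass to a.e.\ limits along subsequences, and the volume constraint passes since \(\chi_\O\in L^2(\Od)\)), hence weakly compact. I would then check that \(\compnl\) is \emph{strongly continuous} on \(\cD^\delta\): if \(\rho_j\to\rho\) in \(L^2(\Od)\) with \(\rho_j\in\cD^\delta\), then by the uniform bounds and dominated convergence \(\rho_j^{1/2}\to\rho^{1/2}\) in \(L^2(\Od)\), so \(\kappa_j(x,x')=\rho_j^{1/2}(x)\rho_j^{1/2}(x')\to\kappa(x,x')=\rho^{1/2}(x)\rho^{1/2}(x')\) in \(L^1(\Od\times\Od)\); together with \(\underline{\kappa}\le\kappa_j\le\overline{\kappa}\) this yields \(\kappa_j\rightharpoonup\kappa\) weak\(^*\) in \(L^\infty(\Od\times\Od)\), so Theorem~\ref{thm:dsgn2state} gives \(\cS_\delta(\kappa_j)\to\cS_\delta(\kappa)\) in \(L^2(\Od)\), and therefore \(\compnl(\rho_j)=\ell(\cS_\delta(\kappa_j))\to\ell(\cS_\delta(\kappa))=\compnl(\rho)\).

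Finally, a convex, strongly lower semicontinuous functional on a Hilbert space is weakly lower semicontinuous (Mazur's lemma applied to its epigraph), so \(\compnl\) is weakly sequentially lower semicontinuous on \(\cD^\delta\). Taking a minimizing sequence \(\{\rho_j\}\subset\cD^\delta\), extracting a weakly convergent subsequence \(\rho_j\rightharpoonup\rho^*\in\cD^\delta\), and invoking lower semicontinuity gives \(\compnl(\rho^*)\le\liminf_j\compnl(\rho_j)=\inf_{\cD^\delta}\compnl\), which together with the reduction above produces the optimal pair \((\rho^*,u^*)\in\cD^\delta\times\cU_0^\delta\). The main obstacle is the limit passage itself: since \(\rho\mapsto\rho^{1/2}(x)\rho^{1/2}(x')\) is nonlinear it is not weak\(^*\) continuous, so one cannot naively pass to the limit in the nonlocal state equation; convexity of \(\compnl\) (equivalently, concavity of the geometric mean) is precisely what circumvents this, reducing the lower semicontinuity to the strong continuity furnished by Theorem~\ref{thm:dsgn2state}.
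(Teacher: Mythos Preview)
Your argument is correct and shares the paper's key insight: concavity of the geometric mean \((s,t)\mapsto\sqrt{st}\) is what buys weak lower semicontinuity of \(\compnl\). The organization, however, differs in a way worth noting. You first establish convexity of \(\compnl\) as a supremum over \(v\), and then separately prove \emph{strong} continuity of \(\compnl\) on \(\cD^\delta\) by invoking Theorem~\ref{thm:dsgn2state}; Mazur's lemma then upgrades this to weak lower semicontinuity. The paper instead works ``inside the infimum'': for each fixed \(u\), the map \(\rho\mapsto I_{\delta,\kappa}(u)\) is concave and norm-continuous (the latter by a trivial application of dominated convergence), hence weak\(^*\) upper semicontinuous, and upper semicontinuity survives the infimum over \(u\). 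This avoids Theorem~\ref{thm:dsgn2state} altogether. Your route is a little longer but has the virtue of making explicit that \(\compnl\) itself is convex and even strongly continuous on \(\cD^\delta\); the paper's route is more economical and shows that the existence result is really elementary, independent of the finer continuity properties of the coefficient-to-state map.
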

\begin{proof}
  In view of the existence of states for every conductivity distribution
  \(\rho \in \cD^\delta\), and also since these states satisfy the energy minimization
  principle~\eqref{eq:nl_dir_pple}, our optimal design problem can be equivalently stated as the
  following saddle point problem:
  \begin{equation*}
    \max_{\rho \in \cD^\delta} \min_{u \in \cU_0^\delta} I_{\delta,\kappa}(u),
  \end{equation*}
  where \(\kappa(x,x')= \rho^{1/2}(x)\rho^{1/2}(x')\).
  Note that the map \(\R^2_{+}\ni (\xi,\eta) \mapsto \xi^{1/2}\eta^{1/2} \in \R\)
  is concave.
  Consequently, the map \(\cD^\delta \ni \rho \mapsto I_{\delta,\kappa}(u) \in \R\)
  is concave and continuous (with respect to \(L^\infty(\Od)\)-norm) for each
  \(u\in \cU_0^\delta\), where continuity is owing to the dominated Lebesgue convergence theorem.
  Therefore, the map \(\cD^\delta \ni \rho \mapsto I_{\delta,\kappa}(u) \in \R\)
  is weak\(^*\) sequentially upper semicontinuous in \(L^\infty(\Od)\).
  This property is preserved under taking the infimum, therefore
  \(\cD^\delta \ni \rho \mapsto \min_{u\in\cU_0^\delta} I_{\delta,\kappa}(u) \in \R\)
  is weak\(^*\) sequentially upper semicontinuous in \(L^\infty(\Od)\).
  Finally, the set \(\cD^\delta\) is non-empty, closed and convex in \(L^\infty(\Od)\),
  thereby also weak\(^*\) sequentially compact.
  It only remains to apply the Weierstrass' existence theorem to conclude
  the proof.
\end{proof}
\subsection{Compliance minimization, nonconvex case: \(1<p\le 2\)}
\label{subsec:nonconvex}
Contrary to the case \(p=1\), where lower semicontinuity in the appropriate weak\(^*\)
topology holds in the local case thereby ensuring the existence of optimal solutions~\cite{cea1970example},
for \(p>1\) the nonlinear dependence on the control variable
in the state equation destroys these properties.
Consequently, additional reqularization is required to guarantee the existence of optimal
solutions for the local compliance minimization problem with SIMP, which is also
confirmed and reflected in numerous numerical algorithms based on SIMP
method~\cite{allaire2012shape,bendose2003topology}.
Surprisingly, the non-local compliance minimization problem still attains its infimum even in the
non-convex case \(1<p\le 2\).
We begin with the following simple statement.
\begin{lemma}\label{prop:wstar_cont}
  The map \(\cD^\delta \ni \rho \mapsto \rho(x)\rho(x') \in L^\infty(\Od\times\Od)\)
  is sequentially continuous in the weak\(^*\) topology of \(L^\infty\).
\end{lemma}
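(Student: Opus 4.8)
The plan is to unwind the definition of weak$^*$ convergence in $L^\infty$ and exploit the fact that the two arguments of the product live on \emph{different} copies of $\Od$, so that after Fubini the nonlinearity disentangles completely. Let $\rho_j \rightharpoonup \rho$ weak$^*$ in $L^\infty(\Od)$ with $\rho_j \in \cD^\delta$. Since $\underline{\rho}\le \rho_j \le \overline{\rho}$ a.e., the products $w_j(x,x') := \rho_j(x)\rho_j(x')$ satisfy $\underline{\rho}^2 \le w_j \le \overline{\rho}^2$ a.e. in $\Od\times\Od$, so $\{w_j\}$ is uniformly bounded in $L^\infty(\Od\times\Od)$. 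A uniformly bounded sequence in $L^\infty(\Od\times\Od)$ converges weak$^*$ to a candidate limit $w$ as soon as $\int w_j\phi \to \int w\phi$ for every $\phi$ in a family whose linear span is dense in $L^1(\Od\times\Od)$; I will take $\phi$ of the product (tensor) form $(\psi\otimes\chi)(x,x')=\psi(x)\chi(x')$ with $\psi,\chi\in L^1(\Od)$ — equivalently one may use $\psi=\chi_E$, $\chi=\chi_F$ for measurable $E,F\subset\Od$ — since finite linear combinations of such products approximate any $\phi\in L^1(\Od\times\Od)$ in the $L^1$ norm (e.g. by approximation with simple functions built on measurable rectangles).

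For a product test function, Fubini's theorem factorizes the integral:
\begin{equation*}
  \int_{\Od\times\Od} w_j(x,x')\,\psi(x)\chi(x')\dx\dx'
  = \Big(\int_{\Od}\rho_j(x)\psi(x)\dx\Big)\Big(\int_{\Od}\rho_j(x')\chi(x')\dx'\Big).
\end{equation*}
Each factor on the right converges by the very definition of $\rho_j\rightharpoonup\rho$ weak$^*$ in $L^\infty(\Od)$ (note $\psi,\chi\in L^1(\Od)$), so the product converges to $\big(\int_{\Od}\rho\psi\dx\big)\big(\int_{\Od}\rho\chi\dx'\big)$, which by Fubini again equals $\int_{\Od\times\Od}\rho(x)\rho(x')\psi(x)\chi(x')\dx\dx'$. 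Hence $\int w_j\phi \to \int (\rho\otimes\rho)\phi$ for all $\phi$ in a family with dense span in $L^1$, and combining this with the uniform bound $\|w_j\|_{L^\infty(\Od\times\Od)}\le\overline{\rho}^2$ through a routine $3\varepsilon$-argument gives $w_j \rightharpoonup \rho(\cdot)\rho(\cdot)$ weak$^*$ in $L^\infty(\Od\times\Od)$, which is precisely the assertion.

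I do not expect a serious obstacle here: the only point requiring a little care is the reduction to product test functions, i.e. passing from convergence against $\psi\otimes\chi$ to convergence against a general $\phi\in L^1(\Od\times\Od)$, which is where the uniform $L^\infty$ bound is used. It is worth emphasizing in the write-up \emph{where} the tensor structure is essential: for a single variable, $\rho_j^2$ need not converge weak$^*$ to $\rho^2$, since the nonlinearity does not respect oscillations; in the present map the two factors are evaluated on the two separate variables $x$ and $x'$, so after Fubini no compensated-compactness phenomenon is needed and the argument is completely elementary.
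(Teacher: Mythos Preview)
Your argument is correct, but it proceeds differently from the paper's proof. You reduce to tensor test functions \(\psi\otimes\chi\), factor the integral via Fubini, and then upgrade to general \(\phi\in L^1(\Od\times\Od)\) through density of the algebraic tensor product together with the uniform bound \(\|w_j\|_{L^\infty}\le\overline{\rho}^2\). The paper instead works directly with an arbitrary \(\psi\in L^1(\Od\times\Od)\): it integrates out the \(x'\)-variable to define \(\phi_k(x)=\int_{\Od}\psi(x,x')\rho_k(x')\,\mathrm{d}x'\), observes that \(\phi_k(x)\to\hat{\phi}(x)\) for a.e.\ \(x\) by weak\(^*\) convergence, and then uses dominated convergence (with majorant \(\overline{\rho}\int_{\Od}|\psi(x,x')|\,\mathrm{d}x'\)) to obtain \(\phi_k\to\hat{\phi}\) strongly in \(L^1(\Od)\); the conclusion then follows from the splitting \(\phi_k\rho_k-\hat{\phi}\hat{\rho}=\hat{\phi}(\rho_k-\hat{\rho})+(\phi_k-\hat{\phi})\rho_k\). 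Your route is perhaps the more ``textbook'' one and makes the role of the tensor structure very explicit; the paper's route avoids invoking density of tensors in \(L^1(\Od\times\Od)\) but trades it for a dominated convergence step. Both exploit the same essential mechanism---the two factors live on independent variables---and neither is deeper than the other.
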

\begin{proof}
  Consider a sequence \(\rho_k \in \cD^\delta\), \(k=1,2,\dots\) with
  \(\rho_k \rightharpoonup \hat{\rho}\), weak\(^*\) in \(L^\infty(\Od)\).
  Let us take an arbitrary \(\psi \in L^1(\Od\times\Od)\).
  Owing to Fubini's theorem, the sequence \(\phi_k(x) = \int_{\Od} \psi(x,x')\rho_k(x')\dx'\),
  \(k=1,2,\dots\),
  converges towards \(\hat{\phi}(x) = \int_{\Od} \psi(x,x')\hat{\rho}(x')\dx'\),
  for almost all \(x \in \Od\).
  As the elements of this sequence are dominated by an \(L^1(\Od)\) function
  \(\overline{\rho}\int_{\Od} |\psi(x,x')|\dx'\),  Lebesgue's dominated convergence
  theorem applies implying that \(\lim_{k\to\infty} \|\phi_k-\hat{\phi}\|_{L^1(\Od)} = 0\).
  Finally,
  \begin{equation*}
    \begin{aligned}
      0 &\le
      \lim_{k\to\infty} \bigg|
      \int_{\Od}\int_{\Od} \psi(x,x')[\rho_k(x')\rho_k(x)-\hat{\rho}(x')\hat{\rho}(x)]\dx'\dx
      \bigg|
      =
      \lim_{k\to\infty} \bigg|\int_{\Od} [\phi_k(x)\rho_k(x) - \hat{\phi}(x)\hat{\rho}(x)]\dx \bigg|
      \\ & \le
      \lim_{k\to\infty} \bigg|\int_{\Od} \hat{\phi}(x)[\rho_k(x) - \hat{\rho}(x)]\dx \bigg|
      +
      \lim_{k\to\infty} \overline{\rho} \|\phi_k-\hat{\phi}\|_{L^1(\Od)}
      = 0.
    \end{aligned}
  \end{equation*}
\end{proof}
With this in mind, we can extend Proposition~\ref{prop:compl_p1} to
the non-convex case.
\begin{proposition}\label{prop:compl_p12}
  The compliance minimization problem~\eqref{eq:min} (that is,
  \(J(\rho,u) = \ell(u)\)), admits an optimal solution
  \((\rho^*,u^*) \in \cD^\delta\times \cU_0^\delta\) for \(1<p\le 2\).
\end{proposition}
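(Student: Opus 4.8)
The plan is to follow the same Weierstrass-type scheme as in the proof of Proposition~\ref{prop:compl_p1}, but now handling the non-convexity introduced by $p>1$ through the compactness afforded by weak\(^*\) sequential compactness of \(\cD^\delta\) together with Lemma~\ref{prop:wstar_cont} and Theorem~\ref{thm:dsgn2state}. First I would recast the compliance minimization problem in its reduced form: by Theorem~\ref{thm:existence} the state is uniquely determined by \(\rho\), so the problem is \(\min_{\rho\in\cD^\delta}\compnl(\rho)\) with \(\compnl(\rho)=\ell(\cS_\delta(\kappa))\) and \(\kappa(x,x')=\rho^{p/2}(x)\rho^{p/2}(x')\). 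Take a minimizing sequence \(\rho_k\in\cD^\delta\). Since \(\cD^\delta\) is non-empty, convex and closed and bounded in \(L^\infty(\Od)\), it is weak\(^*\) sequentially compact, so along a subsequence \(\rho_k\rightharpoonup\rho^*\) weak\(^*\) in \(L^\infty(\Od)\), and \(\rho^*\in\cD^\delta\) because the constraint set is weak\(^*\) closed (the pointwise bounds and the linear volume constraint all pass to the limit).

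Next I would pass to the limit in the conductivities. The key point is that for \(1<p\le 2\) the map \(\rho\mapsto\rho^{p/2}\) is continuous and bounded from \(L^\infty(\Od)\) with values in \(L^\infty(\Od)\), and \(\rho_k^{p/2}\) is uniformly bounded; moreover \(\rho_k(x)\to\rho^*(x)\) need not hold pointwise, so I cannot simply compose. The clean route is: the exponent \(p/2\in(1/2,1]\), so \(t\mapsto t^{p/2}\) is concave on \(\R_+\); however concavity by itself does not give weak\(^*\) continuity of \(\rho\mapsto\rho^{p/2}\). Here is where I expect the main obstacle, and where the choice of the geometric mean and the restriction \(p\le 2\) pay off: write \(\sigma_k:=\rho_k^{p/2}\). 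Because \(p/2\le 1\), we have \(\sigma_k=\rho_k^{p/2}\) with \(\rho_k\in[\underline\rho,\overline\rho]\); the issue is identifying the weak\(^*\) limit of \(\sigma_k(x)\sigma_k(x')\). I would argue that it suffices to show \(\sigma_k\sigma_k'\) (as an element of \(L^\infty(\Od\times\Od)\)) converges weak\(^*\) to some admissible \(\hat\kappa\in\cK^\delta\), and then apply Theorem~\ref{thm:dsgn2state} to get \(u_k=\cS_\delta(\kappa_k)\to\hat u=\cS_\delta(\hat\kappa)\) strongly in \(\cU_0^\delta\), hence \(\compnl\) along the subsequence converges to \(\ell(\hat u)\). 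By weak\(^*\) lower semicontinuity considerations this limit equals the infimum.

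The delicate step, then, is to show that the weak\(^*\) limit \(\hat\kappa\) of \(\rho_k^{p/2}(x)\rho_k^{p/2}(x')\) equals \((\rho^*)^{p/2}(x)(\rho^*)^{p/2}(x')\) — or, failing that, that \(\hat\kappa\) still gives a value no larger than what \(\rho^*\) would. I would handle this by appealing to the saddle-point (max--min) reformulation exactly as in the proof of Proposition~\ref{prop:compl_p1}: the problem is \(\max_{\rho\in\cD^\delta}\min_{u\in\cU_0^\delta} I_{\delta,\kappa}(u)\) with \(\kappa(x,x')=\rho^{p/2}(x)\rho^{p/2}(x')\), and for each fixed \(u\) the map \(\rho\mapsto I_{\delta,\kappa}(u)\) is, thanks to \(p/2\le 1\), concave in \(\rho\) (the integrand is \(\rho^{p/2}(x)\rho^{p/2}(x')\) times a nonnegative weight, and \((\xi,\eta)\mapsto\xi^{p/2}\eta^{p/2}\) is concave on \(\R_+^2\) precisely when \(p\le 2\)), and continuous in the \(L^\infty\)-norm by dominated convergence. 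A concave norm-continuous functional on the convex set \(\cD^\delta\) is weak\(^*\) sequentially upper semicontinuous, and this property is inherited by the pointwise infimum over \(u\). Hence \(\rho\mapsto\min_u I_{\delta,\kappa}(u)\) is weak\(^*\) sequentially u.s.c.\ on the weak\(^*\) sequentially compact set \(\cD^\delta\), and Weierstrass' theorem yields a maximizer \(\rho^*\), with corresponding state \(u^*=\cS_\delta(\rho^{*\,p/2}(x)\rho^{*\,p/2}(x'))\); the pair \((\rho^*,u^*)\) solves~\eqref{eq:min}. This is the cleanest path and avoids ever needing weak\(^*\) continuity of \(\rho\mapsto\rho^p\); the essential hypothesis used is \(1<p\le 2\), which is exactly what makes \((\xi,\eta)\mapsto\xi^{p/2}\eta^{p/2}\) concave.
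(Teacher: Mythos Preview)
Your saddle-point reduction is the right overall strategy, but the concavity claim at its heart is false: the map \((\xi,\eta)\mapsto \xi^{p/2}\eta^{p/2}\) is \emph{not} concave on \(\R_+^2\) for \(1<p\le 2\). Along the diagonal it becomes \(t\mapsto t^{p}\), which is strictly convex whenever \(p>1\); equivalently, the Hessian determinant works out to \(-(p/2)^2(p-1)\xi^{p-2}\eta^{p-2}<0\) for \(p>1\), so the Hessian is indefinite. Joint concavity holds precisely for \(p\le 1\), which is exactly why the proof of Proposition~\ref{prop:compl_p1} does not extend verbatim and why something extra is needed here.

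The paper's remedy is to route the argument through Lemma~\ref{prop:wstar_cont}, which you mentioned early on but then abandoned. Introduce the intermediate variable \(\tilde\kappa(x,x')=\rho(x)\rho(x')\); by Lemma~\ref{prop:wstar_cont} the map \(\rho\mapsto\tilde\kappa\) is sequentially weak\(^*\)-to-weak\(^*\) continuous from \(\cD^\delta\) into \(L^\infty(\Od\times\Od)\). Since \(\kappa=\tilde\kappa^{p/2}\), for each fixed \(u\in\cU_0^\delta\) the map \(\tilde\kappa\mapsto I_{\delta,\tilde\kappa^{p/2}}(u)\) is concave because the \emph{one-variable} function \(t\mapsto t^{p/2}\) is concave for \(0<p\le 2\). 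Norm-continuity plus concavity in \(\tilde\kappa\) gives weak\(^*\) sequential upper semicontinuity in \(\tilde\kappa\), and composing with the weak\(^*\) continuous map \(\rho\mapsto\tilde\kappa\) yields weak\(^*\) sequential upper semicontinuity of \(\rho\mapsto I_{\delta,\kappa}(u)\). From there your remaining steps (infimum over \(u\), Weierstrass on the weak\(^*\) compact \(\cD^\delta\)) go through unchanged.
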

\begin{proof}
  As in the proof of Proposition~\ref{prop:compl_p1} it is sufficient to
  establish the weak\(^*\) sequential upper semicontinuity of the map
  \(\cD^\delta \ni \rho \mapsto I_{\delta,\kappa}(u) \in \R\) with
  \(\kappa(x,x')= \rho^{p/2}(x)\rho^{p/2}(x')\), since upper semicontinuity
  is preserved under taking minimum over \(u \in \cU_0^\delta\).
  The required property follows easily from the norm-continuity and concavity of
  the map \(\cK^\delta \ni \tilde{\kappa} \mapsto I_{\delta,\tilde{\kappa}^{p/2}}(u) \in \R\)
  for \(0<p\le 2\) and Lemma~\ref{prop:wstar_cont}.
\end{proof}
\subsection{More general objective functions: \(p=2\)}
\label{subsec:nonconvex_gen}
SIMP has been successfully utilized within other contexts than compliance
minimization, see~\cite{bendose2003topology}.
The non-local optimal design problem we consider admits optimal solutions
without the need for further regularization in the special case \(p=2\)
for a wide class of objective functions.
\begin{proposition}
  Let \(p=2\), and assume that the objective function
  \((\rho,u)\mapsto J(\rho,u)\)
  is sequentially lower semicontinuous with respect to
  weak\(^*\) topology of \(L^\infty(\Od)\) \(\times\) norm topology of \(\cU_0^\delta\).
  Then the optimal design problem~\eqref{eq:min} admits an optimal solution
  \((\rho^*,u^*) \in \cD^\delta\times \cU_0^\delta\).
\end{proposition}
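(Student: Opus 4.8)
The plan is to use the direct method in the calculus of variations, following closely the structure of the proofs of Propositions~\ref{prop:compl_p1} and~\ref{prop:compl_p12}, but now keeping track of the state variable explicitly since $J$ is no longer reducible to a quantity governed by a saddle-point principle. First I would take a minimizing sequence $(\rho_k,u_k) \in \cD^\delta\times\cU_0^\delta$ for~\eqref{eq:min}, so that each $u_k = \cS_\delta(\kappa_k)$ with $\kappa_k(x,x') = \rho_k(x)\rho_k(x')$ (here $p=2$, so $\rho^{p/2} = \rho$), and $J(\rho_k,u_k) \to \inf\eqref{eq:min}$. Since $\cD^\delta$ is bounded, closed, and convex in $L^\infty(\Od)$, it is weak$^*$ sequentially compact, so after passing to a subsequence $\rho_k \rightharpoonup \rho^*$ weak$^*$ in $L^\infty(\Od)$ with $\rho^* \in \cD^\delta$.

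The crucial observation, which is exactly where $p=2$ enters, is that by Lemma~\ref{prop:wstar_cont} the products $\kappa_k = \rho_k(\cdot)\rho_k(\cdot)$ converge weak$^*$ in $L^\infty(\Od\times\Od)$ to $\hat\kappa := \rho^*(\cdot)\rho^*(\cdot)$, and $\hat\kappa \in \cK^\delta$ since $\underline\rho^2 \le \hat\kappa \le \overline\rho^2$ and $\hat\kappa$ is symmetric. (For general $p$ the map $\rho \mapsto \rho^{p/2}(x)\rho^{p/2}(x')$ is \emph{not} weak$^*$ sequentially continuous, which is why the result is stated only for $p=2$.) Now I would invoke Theorem~\ref{thm:dsgn2state}: since $\kappa_k \rightharpoonup \hat\kappa$ weak$^*$, the corresponding states converge, $u_k = \cS_\delta(\kappa_k) \to \cS_\delta(\hat\kappa) =: u^*$ strongly in $\cU_0^\delta$ (and in $L^2(\Od)$). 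By construction $u^*$ is the state associated with $\rho^*$, so the pair $(\rho^*,u^*)$ is feasible for~\eqref{eq:min}.

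To finish, I would use the assumed sequential lower semicontinuity of $J$ with respect to the product of the weak$^*$ topology of $L^\infty(\Od)$ and the norm topology of $\cU_0^\delta$: since $\rho_k \rightharpoonup \rho^*$ weak$^*$ and $u_k \to u^*$ in $\cU_0^\delta$, we get
\begin{equation*}
  J(\rho^*,u^*) \le \liminf_{k\to\infty} J(\rho_k,u_k) = \inf\eqref{eq:min},
\end{equation*}
and since $(\rho^*,u^*)$ is feasible, the infimum is attained there.

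The main obstacle, and the reason this argument does not extend beyond $p=2$, is precisely the passage from weak$^*$ convergence of $\rho_k$ to convergence of the non-local conductivities $\kappa_k$: the quadratic (bilinear) structure $\rho_k(x)\rho_k(x')$ is handled by Lemma~\ref{prop:wstar_cont} via Fubini plus dominated convergence, but $\rho_k^{p/2}(x)\rho_k^{p/2}(x')$ for $p \ne 2$ involves a nonlinear function of $\rho_k$ that is not weak$^*$ continuous, and one has no reason to expect the states to converge. Everything else — compactness of $\cD^\delta$, continuity of the coefficient-to-state map from Theorem~\ref{thm:dsgn2state}, and the lower semicontinuity of $J$ — is either standard or already available in the excerpt, so the proof is short once the $p=2$ reduction is in place.
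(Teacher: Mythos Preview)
Your proposal is correct and follows essentially the same route as the paper's proof: direct method, weak\(^*\) compactness of \(\cD^\delta\), Lemma~\ref{prop:wstar_cont} to pass from \(\rho_k\rightharpoonup\rho^*\) to \(\kappa_k\rightharpoonup\hat\kappa\) weak\(^*\) (this is exactly where \(p=2\) is used), Theorem~\ref{thm:dsgn2state} for strong convergence of the states, and finally the assumed lower semicontinuity of \(J\). Your additional commentary on why the argument breaks for \(p\neq 2\) is accurate and in the spirit of the paper's surrounding discussion.
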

\begin{proof}
  In view of weak\(^*\) compactness of \(\cD^\delta\) in \(L^\infty(\Od)\),
  in order to apply the direct method of calculus of variations and conclude
  the existence of optimal solutions it is sufficient to establish that for an arbitrary
  minimizing sequence \((\rho_k,u_k) \in \cD^\delta\times\cU_0^\delta\) with
  \(\rho_k \rightharpoonup \hat{\rho}\), weak\(^*\) in \(L^\infty(\Od)\),
  we have the corresponding convergence \(u_k \to \hat{u}\) in \(\cU_0^\delta\),
  where \(\hat{u} = \cS_{\delta}(\hat{\kappa})\) with \(\hat{\kappa}(x,x') = \hat{\rho}(x)\hat{\rho}(x')\).
  However, this follows immediately from Theorem~\ref{thm:dsgn2state}
  in view of Proposition~\ref{prop:wstar_cont}.
\end{proof}

% !TEX root = NonlocalOD.tex
%

\section{Convergence to the local problem as \(\delta\to 0\): connection to SIMP}
\label{sec:gamma}
We will now turn our attention to the relationship between the non-local compliance minimization problem
and the local one, which arizes as a natural candidate for the limiting object for vanishing
non-local interaction horizons \(\delta \to 0\).
More specifically, we would like to understand whether infimum values of the nonlocal problems converge
to the infimum of the local problem, and/or whether sequences of minimizers of the nonlocal problems
converge towards minimizers of the local problem.
The standard framework for studying variational convergence of functionals,
which is equipped with the precise vocabulary for formulating and answering such questions, is that
of \(\Gamma\)-convergence~\cite{braides_beginners}.
Unfortunately, in our situation it is impossible to expect the local compliance minimization
problem to be the \(\Gamma\)-limit of the nonlocal compliance minimization problems for any \(p\in[1,2]\).
Indeed, the \(\Gamma\)-limit is always a lower semicontinuous functional in the topology in which
the \(\Gamma\)-convergence is set~\cite{braides_beginners}.
However, the local compliance functional in the presense of SIMP penalization with \(p>1\)
is \emph{not} lower semicontinuous in a relevant topology.
This is precisely the fundamental reason for the lack of optimal solutions to the local
compliance minimization problem in this situation, the fact which is well documented and understood
in the literature~\cite{bendose2003topology,allaire2012shape}.
In spite of this unfortunate insurmountable obstacle, in this section we would like to
investigate what kind of relationship between the two problems can be salvaged for any \(p\in[1,2]\).

%-------------------------------------------------------------------------------

In order to succinctly discuss convergence of minimizers it would be convenient to
put them into the same function space, which is not a priori the case given the
fact that \(\Od\) decreases to \(\O\) as \(\delta\to 0\).
Since we are only concerned with small \(\delta >0\), we fix an arbitrary
\(\delta_0 > 0\) and will only consider \(\delta \in ]0,\delta_0[\).
This allows us to consider material distributions to be elements of
the ``largest'' space \(L^\infty(\O_{\delta_0})\), extending them by
\(\underline{\rho}\) outside of their domain of definition \(\Omega_\delta\),
\(0<\delta<\delta_0\).
(The same applies to the limiting local model, if we ``by continuity'' put \(\Omega_0 = \Omega\).)
In a similar fashion we will extend the state functions by \(0\) outside of their
domain of definition \(\Omega_\delta\), \(0\le \delta<\delta_0\).

With such an extension we have that both \(\compnl\) and \(\comploc\) are defined
on a subset of the same function space, \(L^\infty(\O_{\delta_0})\), which we equip
with weak\(^*\) topology.
Convergence \(\delta\to 0\) will be understood as convergence for any sequence
\(\delta_j\to 0\) as \(j\to \infty\).

%%Throughout this section we will assume that the SIMP ``exponent'' \(p \in [1,2]\).

%-------------------------------------------------------------------------------

\subsection{``\(\Gamma\)-lower semi-continuity''}
%

%if call
%\[m=\int_{\rho\in cD} J(\rho),\quad m_{\delta}=\int_{\rho\in cD^\delta} J_\delta(\rho),\]
%then
%\[ m\le J(\rho)=\lim_{\delta\to 0} J_\delta(\rho)\le \lim_{\delta\to 0} m_\delta.\]

The first result is in the spirit of the \(\liminf\)-inequality of \(\Gamma\)-convergence,
but with the unfortunate exception that the functional arguments do not converge in the
natural topology of the function space we work with, expect for \(p=1\).

\begin{proposition}\label{prop:LI}
  Let \(\rho_\delta \in L^\infty(\O_{\delta_0})\) be such that
  \(\underline{\rho}\le \rho_\delta(x),\,\rho(x) \le \overline{\rho}\) for almost all \(x\in \O_{\delta_0}\).
  Assume that \(\rho^p_\delta\rightharpoonup \rho^p\), weak\(^*\) in \(L^\infty(\O_{\delta_0})\) as $\delta\to 0$.
  Let \(u_{\delta} = \cS_{\delta}(\kappa_\delta)\) be the solution of~\eqref{eq:w},
  where \(\kappa_\delta(x,x')=\rho_\delta^{p/2}(x) \rho_\delta^{p/2}(x')\),
  and let \(u=\cS(\rho^p)\) be the solution to~\eqref{eq:w_loc}.
  Then
  \begin{equation*}
    \liminf_{\delta\to 0} \compnl(\rho_\delta) \ge \comploc(\rho).
  \end{equation*}
\end{proposition}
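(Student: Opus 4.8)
The plan is to exploit the variational (energy minimization) characterizations of both the non-local and the local states together with the compliance identity $\compnl(\rho_\delta) = -2 I_{\delta,\kappa_\delta}(u_\delta)$ and $\comploc(\rho) = -2 I_{\rho^p}(u)$. Since $u_\delta$ minimizes $I_{\delta,\kappa_\delta}$ over $\cU_0^\delta$, for \emph{any} convenient competitor $v \in C^\infty_c(\O)$ we have $-2 I_{\delta,\kappa_\delta}(u_\delta) \ge -2 I_{\delta,\kappa_\delta}(v) = 2\ell(v) - a_{\delta,\kappa_\delta}(v,v)$. Thus it suffices to show that for each fixed smooth $v$ with compact support in $\O$,
\begin{equation*}
  \limsup_{\delta\to 0} a_{\delta,\kappa_\delta}(v,v) \le a_{\rho^p}(v,v),
\end{equation*}
because then $\liminf_{\delta\to 0}\compnl(\rho_\delta) \ge 2\ell(v) - a_{\rho^p}(v,v) = -2 I_{\rho^p}(v)$, and taking the supremum over $v \in C^\infty_c(\O)$ (which is dense in $\cU_0$) yields $\liminf_{\delta\to 0}\compnl(\rho_\delta) \ge -2 I_{\rho^p}(u) = \comploc(\rho)$.

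The core of the argument is therefore the kernel-limit estimate for smooth test functions. For $v \in C^\infty_c(\O)$ one writes
\begin{equation*}
  a_{\delta,\kappa_\delta}(v,v) = \int_{\Od}\int_{\Od} \rho_\delta^{p/2}(x)\rho_\delta^{p/2}(x') A_\delta(|x-x'|) \frac{(v(x)-v(x'))^2}{|x-x'|^2}\dx\dx'.
\end{equation*}
Using the Lipschitz bound $|v(x)-v(x')| \le \|\nabla v\|_\infty |x-x'|$ together with the normalization~\eqref{eq:kernelnrm} and the support condition~\eqref{eq:smallsupport}, the integrand is dominated uniformly in $\delta$, and since $\supp A_\delta \subset B(0,\delta)$ the difference quotient $\frac{(v(x)-v(x'))^2}{|x-x'|^2}$ can be replaced, up to an $o(1)$ error as $\delta\to 0$, by $|\nabla v(x)\cdot \frac{x-x'}{|x-x'|}|^2$ plus a Taylor remainder of order $|x-x'|$. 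The standard computation with the radial kernel (using~\eqref{eq:kernelnrm} and the spherical average $\frac{1}{n}\int_{\S^{n-1}}|\omega\cdot e|^2\,d\sigma(\omega)$ evaluating correctly) shows that, \emph{if the coefficient $\rho_\delta^{p/2}(x)\rho_\delta^{p/2}(x')$ were continuous}, one would get convergence to $\int_\O \rho^p(x)|\nabla v(x)|^2\dx$. To handle the merely $L^\infty$, weak$^*$-convergent coefficients, the point is that $x'$ ranges only over $B(x,\delta)$: one freezes the ``diagonal'' behavior by comparing $\rho_\delta^{p/2}(x)\rho_\delta^{p/2}(x')$ with $\rho_\delta^p(x)$ and controlling the discrepancy, then uses the hypothesis $\rho_\delta^p \rightharpoonup \rho^p$ weak$^*$ against the (smooth, compactly supported) weight generated by integrating the kernel against $(v(x)-v(x'))^2/|x-x'|^2$ in $x'$.

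The main obstacle I anticipate is precisely this last interchange: the coefficient appears as the \emph{product} $\rho_\delta^{p/2}(x)\rho_\delta^{p/2}(x')$ rather than a single factor, and weak$^*$ convergence does not pass to products of two weakly-convergent sequences. The resolution is to exploit the shrinking horizon: on $\supp A_\delta$ one has $|x-x'|<\delta\to 0$, so after a change of variables $x' = x + \delta z$ the factor $\rho_\delta^{p/2}(x+\delta z)$ should be ``absorbed'' — more precisely, one splits $a_{\delta,\kappa_\delta}(v,v)$ into a main term with coefficient $\rho_\delta^p(x)$ (obtained from a symmetrization in $x \leftrightarrow x'$ and the arithmetic–geometric mean inequality $\sqrt{\rho_\delta^p(x)\rho_\delta^p(x')} \le \tfrac12(\rho_\delta^p(x)+\rho_\delta^p(x'))$, which goes in the favorable direction for an upper bound) plus a remainder that vanishes because $v$ is Lipschitz and the kernel mass concentrates near the origin. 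Once the coefficient is reduced to the single factor $\rho_\delta^p(x)$, weak$^*$ convergence against the smooth weight $x \mapsto \frac{1}{n}\int A_\delta(|x-x'|)\frac{(v(x)-v(x'))^2}{|x-x'|^2}\dx' \to |\nabla v(x)|^2$ closes the argument. I would carry out the steps in the order: (i) reduce to smooth competitors via the Dirichlet principle; (ii) symmetrize and apply AM–GM to replace the geometric-mean coefficient by $\rho_\delta^p(x)$ at the cost of a controlled error; (iii) prove the kernel localization limit $\frac{1}{n}\int_{\Od} A_\delta(|x-x'|)\frac{(v(x)-v(x'))^2}{|x-x'|^2}\dx' \to |\nabla v(x)|^2$ uniformly on compact subsets of $\O$; (iv) apply weak$^*$ convergence of $\rho_\delta^p$ and pass to the supremum over $v$.
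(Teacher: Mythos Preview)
Your approach is correct and shares the two key steps with the paper's proof: (a) the AM--GM bound
\(\rho_\delta^{p/2}(x)\rho_\delta^{p/2}(x')\le \tfrac12(\rho_\delta^p(x)+\rho_\delta^p(x'))\) together with the \(x\leftrightarrow x'\) symmetry to reduce the coefficient to the single factor \(\rho_\delta^p(x)\), and (b) the BBM-type localization of the inner integral combined with weak\(^*\) convergence of \(\rho_\delta^p\).
The difference is only in the choice of competitor: the paper plugs in the local solution \(u\) itself (using \cite[Theorem~1]{BBM} to ensure \(u\in\cU_0^\delta\)) and decomposes \(\compnl(\rho_\delta)-\comploc(\rho)\) as a nonnegative quadratic term plus \(a_{\rho^p}(u,u)-a_{\delta,\kappa_\delta}(u,u)\), whereas you use smooth \(v\in C_c^\infty(\O)\) and take a supremum followed by density in \(\cU_0\). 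Your route is a legitimate and arguably slightly more elementary variation, since the BBM kernel limit is only needed for smooth functions; the paper's route is shorter, avoiding the density step at the end.

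Two small clean-ups: in your step~(ii) there is no ``controlled error'' left over from AM--GM---it is a one-sided inequality in the direction you need, as you in fact note earlier; and in step~(iii) drop the factor \(1/n\) in front of the inner integral, since the normalization~\eqref{eq:kernelnrm} already absorbs it and the limit is exactly \(|\nabla v(x)|^2\). Also make explicit that the inner integral converges to \(|\nabla v|^2\) \emph{strongly in \(L^1(\O_{\delta_0})\)} (not merely uniformly on compacta), which is what you actually need to pair with the weak\(^*\) limit of \(\rho_\delta^p\in L^\infty\); for \(v\in C_c^\infty(\O)\) this is immediate from the uniform bound and compact support.
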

\begin{proof}
  For convenience we put \(\kappa(x,x') = \rho^{p/2}(x)\rho^{p/2}(x')\);
  then \(\lim_{\delta\to 0}\kappa_\delta(x,x') =\kappa(x,x')\), for almost all
  \((x,x')\in\O_{\delta_0}^2\).
  Note that owing to~\cite[Theorem~1]{BBM}
  we have the inclusion \(u\in \cU_0^\delta\), for all \(\delta  \in ]0,\delta_0[\).
  Therefore, we can test~\eqref{eq:w} with \(v=u\) to get the equality
  \begin{equation*}
    a_{\delta,\kappa_\delta}(u_\delta,u) = \ell(u) = a_{\rho^p}(u,u).
  \end{equation*}
  Consequently, the difference \(L_\delta := \compnl(\rho_\delta)-\comploc(\rho)=
  \ell(u_\delta-u)\) can be written as
  \begin{equation*}
    \begin{aligned}
      L_\delta &= a_{\delta,\kappa_\delta}(u_\delta,u_\delta) - 2a_{\delta,\kappa_\delta}(u_\delta,u)
                + a_{\rho^p}(u,u)
        \\&=
        \underbrace{[a_{\delta,\kappa_\delta}(u_\delta,u_\delta) - 2a_{\delta,\kappa_\delta}(u_\delta,u)
          + a_{\delta,\kappa_\delta}(u,u)]}_{=:L^{(1)}_\delta}
        + \underbrace{[a_{\rho^p}(u,u)-a_{\delta,\kappa_\delta}(u,u)]}_{=:L^{(2)}_\delta}.
          %\\&
        %+ \underbrace{[a_{\delta,\kappa}(u,u)-a_{\delta,\kappa}(\phi,\phi)]}_{=:L^{(3,1)}_\delta}
        %+ \underbrace{[a_{\delta,\kappa_\delta}(\phi,\phi)-a_{\delta,\kappa_\delta}(u,u)]}_{=:L^{(3,2)}_\delta}
        %+ \underbrace{[a_{\delta,\kappa}(\phi,\phi)-a_{\delta,\kappa_\delta}(\phi,\phi)]}_{=:L^{(3,3)}_\delta},
    \end{aligned}
  \end{equation*}
%  where \(\phi \in C^\infty_c(\O)\) is arbitrary.
%  The terms on the first line have non-negative \(\liminf\), the terms on the
%  second line can be made arbitrarily small with a proper choice of \(\phi\) close to
%  \(u\), and then \(\delta\) small enough.

  Now we have \(L^{(1)}_\delta = a_{\delta,\kappa_\delta}(u_\delta-u,u_\delta-u)\ge 0\)
  because \(a_{\delta,\kappa_\delta}(\cdot,\cdot)\) is an inner product on \(\cU^\delta_0\), and therefore
  \[L_\delta\ge L_\delta^{(2)},\]
  and hence in order to prove the result it is enough to show that
   \[ \liminf_{\delta\to 0} L_\delta^{(2)}\ge 0.\]
   But, applying Young's inequality to the nonlocal conductivity
   \[ \rho_\delta^{p/2}(x) \rho_\delta^{p/2}(x') \le \frac{1}{2}\left(\rho_\delta^{p}(x)+\rho_\delta^{p}(x')  \right)\]
   we have that
   \[
\begin{aligned}
-a_{\delta,\kappa_\delta}(u,u)
&\ge
-\int_{\O_{\delta_0}}\int_{\O_{\delta_0}}
\frac{\rho^{p}_\delta(x)+\rho^p_\delta(x')}{2}
A_{\delta}(|x-x'|)\frac{(u(x)-u(x'))^2}{|x-x'|^2}\dx\dx'
\\&=
-\int_{\O_{\delta_0}}\rho^{p}_\delta(x')\int_{\O_{\delta_0}}
A_{\delta}(|x-x'|)\frac{(u(x)-u(x'))^2}{|x-x'|^2}\dx\dx',
\end{aligned}
\]
and the last term converges to \(-a_{\rho^p}(u,u)\) owing to~\cite[Corollary~1]{BBM} and the weak convergence of \(\rho^{p}_\delta\). Consequently
\[\liminf_{\delta\to 0} [a_{\rho^p}(u,u)-a_{\delta,\kappa_\delta}(u,u)] \ge 0\]
  and the proof is finished.
\end{proof}

%--------------------------------------------------------------------------------

\subsection{Pointwise convergence: \(\lim_{\delta\to 0} \compnl(\rho) = \comploc(\rho)\)}
The following result establishes the pointwise convergence of \(\compnl\) to \(\comploc\)
in \(L^\infty(\O_{\delta_0})\).
It should be understood as a \(\limsup\)-inequality in \(\Gamma\)-convergence,
where the recovering sequence is the constant sequence.

In order to prove it we need the following lemma,
which establishes that the estimate~\eqref{eq:L2coerc} can be made uniform with
respect to small \(\delta > 0\).
\begin{lemma}\label{poincare}
  There exists a constant \(\hat{\delta} \in ]0,\delta_0[\), and a constant
  \(C_{\hat{\delta}}>0\), independent from \(\delta\) and \(u\), such that we have the
  inequality \(C_{\hat{\delta}}\|u\|_{L^2(\O)}\le \|u\|_{\cU^\delta_0}\)
  for any \(\delta\in ]0,\hat{\delta}[\) and \(u\in \cU_0^\delta\).
\end{lemma}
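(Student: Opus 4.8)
The plan is to reduce the claim to a Bourgain--Brezis--Mironescu-type nonlocal Poincar\'e inequality on a fixed large ball, with a constant that is \emph{uniform} for small \(\delta\), and then to use the fact that elements of \(\cU_0^\delta\) vanish outside \(\O\) in order to upgrade the resulting zero-average estimate to a bound on the full \(L^2(\O)\)-norm. First I would fix an open ball \(B\) with \(\overline{\O_{\delta_0}}\subset B\); then \(|B|>|\O|\), since \(\O_{\delta_0}\supsetneq\O\) and the shell \(\Gnl\) has positive Lebesgue measure. Any \(u\in\cU_0^\delta\) (which vanishes a.e.\ outside \(\O\), being an \(a_{\delta,1}\)-limit of functions in \(C_c^\infty(\O)\)) satisfies, for every \(\delta\in{]0,\delta_0[}\),
\[
  \|u\|_{\cU_0^\delta}^2=a_{\delta,1}(u,u)=\int_B\int_B A_\delta(|x-x'|)\,\frac{(u(x)-u(x'))^2}{|x-x'|^2}\dx\dx',
\]
because, up to a null set, the integrand vanishes unless \(|x-x'|<\delta\) (by~\eqref{eq:smallsupport}) and one of \(x,x'\)---say \(x\)---lies in \(\O\); but then \(x\in\O\subset\Od\) and \(\operatorname{dist}(x',\O)\le|x-x'|<\delta\), so \(x'\in\Od\subset B\) as well, i.e.\ the integrand is supported in \(\Od\times\Od\subset B\times B\).

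Next, set \(\rho_\delta(z):=\tfrac1n A_\delta(|z|)\). By~\eqref{eq:kernelnrm} and~\eqref{eq:smallsupport}, \((\rho_\delta)_{\delta>0}\) is a family of non-negative radial functions with \(\int_{\Rn}\rho_\delta=1\) that concentrate at the origin as \(\delta\to0\), i.e.\ an admissible family of mollifiers in the sense of~\cite{BBM}. The key ingredient is then the nonlocal Poincar\'e inequality in the spirit of~\cite{BBM} (Ponce's estimate): there exist \(\delta_1\in{]0,\delta_0[}\) and \(C_B>0\), depending only on \(n\) and \(B\), such that
\[
  \Bigl\|v-\frac1{|B|}\int_B v\,\Bigr\|_{L^2(B)}^2\le C_B\int_B\int_B\frac{(v(x)-v(x'))^2}{|x-x'|^2}\,\rho_\delta(x-x')\dx\dx'
\]
for all \(\delta\in{]0,\delta_1[}\) and all \(v\in L^2(B)\) (the ball \(B\) being a bounded, connected, smooth domain). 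Applying this with \(v=u\) and invoking the localization identity above, the right-hand side equals \(\frac{C_B}{n}\|u\|_{\cU_0^\delta}^2\).

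Finally, writing \(\bar u_B=\frac1{|B|}\int_B u=\frac1{|B|}\int_\O u\) (using \(u\equiv0\) off \(\O\)), a direct expansion together with the Cauchy--Bunyakovsky--Schwarz inequality \((\int_\O u)^2\le|\O|\,\|u\|_{L^2(\O)}^2\) gives
\[
  \Bigl\|u-\bar u_B\,\Bigr\|_{L^2(B)}^2=\|u\|_{L^2(\O)}^2-\frac1{|B|}\Bigl(\int_\O u\Bigr)^2\ge\Bigl(1-\frac{|\O|}{|B|}\Bigr)\|u\|_{L^2(\O)}^2 .
\]
Combining the last two displays and using \(|B|>|\O|\) yields
\[
  \|u\|_{L^2(\O)}^2\le\frac{C_B}{n\bigl(1-|\O|/|B|\bigr)}\,\|u\|_{\cU_0^\delta}^2 ,
\]
which is the desired inequality, with \(\hat\delta=\delta_1\) and \(C_{\hat\delta}=\bigl(n(1-|\O|/|B|)/C_B\bigr)^{1/2}\).

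I expect the main obstacle to be the \(\delta\)-uniformity of the Poincar\'e constant in the second step: for each fixed \(\delta\) such an inequality already follows from~\cite[Lemma~6.2]{bellido2014}, but the constant there may in principle degenerate as \(\delta\to0\). Uniformity rests essentially on the normalization~\eqref{eq:kernelnrm}, which makes \((A_\delta/n)\) an approximate identity, and is precisely what the Bourgain--Brezis--Mironescu/Ponce theory supplies; passing from a sequential statement to a uniform threshold \(\delta_1\) over a continuum of \(\delta\)'s is a routine subsequence argument. An alternative route that avoids exhibiting the explicit constant is a contradiction argument: if the lemma failed there would be \(\delta_m\to0\) and \(u_m\in\cU_0^{\delta_m}\) with \(\|u_m\|_{L^2(\O)}=1\) and \(\|u_m\|_{\cU_0^{\delta_m}}\to0\); by a compactness result in the spirit of~\cite{BBM}, a subsequence of \((u_m)\) would converge in \(L^2(\Rn)\) to some \(u\in W^{1,2}(\Rn)\) with \(\nabla u=0\) and \(u=0\) outside \(\O\), hence \(u\equiv0\), contradicting \(\|u\|_{L^2(\O)}=1\).
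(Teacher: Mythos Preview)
Your argument is correct, but it proceeds by a different route from the paper's. The paper proves the lemma by the contradiction/compactness argument you sketch at the very end: assuming the failure of the uniform bound, one obtains \(\delta_j\to 0\) and \(u_j\in\cU_0^{\delta_j}\) with \(\|u_j\|_{L^2(\O)}=1\) and \(a_{\delta_j,1}(u_j,u_j)\to 0\), applies Ponce's compactness result \cite[Theorem~1.2]{ponce2004estimate} on the fixed domain \(\O_{\delta_0}\) to extract an \(L^2\)-limit \(u_0\in W^{1,2}(\O_{\delta_0})\), and then uses the same theorem's gradient estimate to conclude \(\nabla u_0=0\); since \(u_0\) vanishes on \(\O_{\delta_0}\setminus\cl\O\) this forces \(u_0\equiv 0\), a contradiction.

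Your main proof is instead direct: you invoke Ponce's nonlocal Poincar\'e inequality (rather than his compactness theorem) on a fixed ball \(B\supset\overline{\O_{\delta_0}}\), and then exploit the vanishing of \(u\) outside \(\O\) to pass from a mean-zero estimate to a bound on \(\|u\|_{L^2(\O)}\) via the elementary inequality \(\|u-\bar u_B\|_{L^2(B)}^2\ge (1-|\O|/|B|)\|u\|_{L^2(\O)}^2\). Both approaches ultimately rest on the same BBM/Ponce machinery and on the normalization~\eqref{eq:kernelnrm}; yours has the pleasant feature of yielding an explicit constant, while the paper's contradiction argument is marginally more self-contained in that it only quotes the compactness theorem already used elsewhere in the manuscript. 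One small remark: the Poincar\'e estimate you need is most directly found in \cite{ponce2004estimate} (Theorem~1.1 there) rather than in \cite{BBM} itself; you acknowledge this, but it would be cleaner to cite Ponce explicitly.
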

\begin{proof}
For the sake of contradiction, we assume that for each \(j=1,2,\dots\) there is
\(\delta_j\in ]0,\delta_0[\), and \(u_j\in \cU_0^{\delta_j}\) such that
\begin{equation*}
  \lim_{j\to\infty} \delta_j = 0, \qquad
  \lim_{j\to\infty} \|u_j\|_{\cU^{\delta_j}_0} = 0, \quad\text{but}\quad
  \|u_j\|_{L^2(\O)}=1.
\end{equation*}
Note that since \(u_j\) is extended by zero outside of \(\O_{\delta_j}\),
\(u_j \equiv 0\) in \(\O_{\delta_0}\setminus \O\).
When combined with the smallness of support of \(A_{\delta_j}(|\cdot|)\), see~\eqref{eq:smallsupport},
this implies the equality
\begin{equation*}
  \int_{\O_{\delta_0}}\int_{\O_{\delta_0}} A_{\delta_j}(|x-x'|) \frac{(u_j(x)-u_j(x'))^2}{|x-x'|^2}\dx'\dx
  =a_{\delta_j,1}(u_j,u_j)=\|u_j\|^2_{\cU^{\delta_j}_0}.
\end{equation*}
We can therefore apply~\cite[Theorem~1.2]{ponce2004estimate}, which asserts that
the sequence \(u_j\) is relatively compact in \(L^2(\O_{\delta_0})\) with all
its accumulation points being in \(W^{1,2}(\O_{\delta_0})\).
Let \(u_0 \in W^{1,2}(\Omega_{\delta_0})\) be such an \(L^2(\O_{\delta_0})\)-accumulation point
of \(u_j\); in particular \(\|u_0\|_{L^2(\O)}=1\).
Since the accumulation point does not depend on any finite number of terms in the
sequence \(u_j\), we can utilize the estimate in~\cite[Theorem~1.2]{ponce2004estimate}
as follows:
\begin{equation*}
  \int_{\O_{\delta_0}}|\nabla u_0(x)|^2\dx\le \limsup_{j \to \infty} \{a_{\delta_j,1}(u_j,u_j)\}=0.
\end{equation*}
Therefore, \(u_0\) must be a constant on \(\O_{\delta_0}\).
On the other hand we have the pointwise (in fact, finite) convergence
\(\lim_{j\to\infty} u_j(x) = 0\), \(x \in \O_{\delta_0} \setminus \cl\O\),
and consequently \(u_0\equiv 0\) on \(\O_{\delta_0} \setminus \cl\O\).
Therefore \(u_0\equiv 0\) on \(\Omega_{\delta_0}\), which contradicts the
previously established fact \(\|u_0\|_{L^2(\O)} = 1\).
\end{proof}
\begin{remark}
  Note that Lemma~\ref{poincare} implies the coercivity of the
  nonlocal equation~\eqref{eq:w} even in the absence of assumption~\eqref{fractionallb},
  as~\cite[Theorem~1.2]{ponce2004estimate} does not require such a condition.
  However, as we mentioned previously, for our purposes the assumption~\eqref{fractionallb} is a natural hypothesis,
  as it implies the continuous embedding of \(\mathcal{U}_0^\delta\) into \(W^{s,2}(\Od)\)
  and therefore also the compact embedding into \(L^2(\Od)\).
\end{remark}

Before staying the main result of this section we need to define the following class of designs:
\begin{equation*}
\tilde{\cD}^\delta=\bigg\{\,
\rho \in \cD^\delta\;:\; \rho(x)=\sum_{k=1}^K \alpha_k\chi_{B_k}(x),\, K\in\mathbb{N},\, \alpha_k>0,\, B_k \text{\ open, pairwise disjoint, and such that\ }
\overline{\cup_{k=1}^K B_k}\supset \O_{\delta}\,\bigg\}.
\end{equation*}
Note that \(\tilde{\cD}^\delta\) is not just the class of simple functions in \(\cD^\delta\), but the class on simple functions \emph{supported} on open sets.  This subtle but important restriction is going to be needed to apply \(\Gamma\)-convergence results of~\cite{ponce2004gamma,bellido2015} in the Step~2 of the proof below.

\begin{proposition}\label{prop:limit}
Consider an arbitrary \(\rho\in \tilde{\cD}^\delta\).
Let \(u_\delta = \cS_\delta(\kappa) \in \cU_0^\delta\) be the sequence of solutions to~\eqref{eq:w}
corresponding to a fixed \(f \in L^2(\O)\) and \(\kappa(x,x')= \rho^{p/2}(x)\rho^{p/2}(x')\),
but varying \(\delta \to 0\).
Let further \(u = \cS^{\text{loc}}(\rho^{p}) \in W^{1,2}_0(\O)\) be the weak solution to
the local generalized Laplace problem~\eqref{eq:w_loc}.

Then
\begin{equation*}
  \lim_{\delta\to 0} \|u_\delta - u\|_{L^2(\O)} = 0, \text{\ and\ }
  \lim_{\delta\to 0} a_{\delta,\kappa}(u_\delta,u_\delta) =
  a_{\rho^p}(u,u).
\end{equation*}
Consequently,
\begin{equation*}
 \lim_{\delta\to 0} \compnl(\rho)=  \comploc(\rho),
\end{equation*}
for any \(\rho \in \cD\).
\end{proposition}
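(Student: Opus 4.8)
The plan is to recast the proposition as a $\Gamma$-convergence statement for the energy functionals. We will show that, as $\delta\to 0$, the nonlocal energies $I_{\delta,\kappa}$ $\Gamma$-converge to the local energy $I_{\rho^p}$ with respect to strong $L^2$-convergence (states being extended by $0$ and designs by $\underline{\rho}$ onto $\O_{\delta_0}$), and that the family of minimizers $\{u_\delta\}$ is precompact in $L^2(\O_{\delta_0})$ with all accumulation points in $W^{1,2}_0(\O)$. The standard consequences of $\Gamma$-convergence together with equi-coercivity --- convergence of minimizers and of minimal values --- then yield all three conclusions at once, since $\compnl(\rho)=\ell(u_\delta)=-2I_{\delta,\kappa}(u_\delta)$ and $\comploc(\rho)=\ell(u)=-2I_{\rho^p}(u)$. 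We note at the outset that $W^{1,2}_0(\O)\subset\cU_0^\delta$ for all $\delta\in]0,\delta_0[$ by \cite[Theorem~1]{BBM}, so that $u=\cS^{\text{loc}}(\rho^p)$ is an admissible competitor in every nonlocal energy.

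Equi-coercivity and compactness come essentially for free. By Lemma~\ref{poincare} we have $a_{\delta,\kappa}(v,v)\ge\underline{\kappa}\,C_{\hat{\delta}}^{2}\|v\|_{L^2(\O)}^2$ for $\delta<\hat{\delta}$, so the minimal values are bounded below uniformly in $\delta$; testing \eqref{eq:w} with $u_\delta$ gives $\|u_\delta\|_{\cU_0^\delta}^2\le\underline{\kappa}^{-1}\ell(u_\delta)$, hence $\|u_\delta\|_{\cU_0^\delta}\le\underline{\kappa}^{-1}C_{\hat\delta}^{-1}\|f\|_{L^2(\O)}$ and a uniform bound on $a_{\delta,1}(u_\delta,u_\delta)$, after which \cite[Theorem~1.2]{ponce2004estimate}, applied exactly as in the proof of Lemma~\ref{poincare}, gives relative compactness of $\{u_\delta\}$ in $L^2(\O_{\delta_0})$ with accumulation points in $W^{1,2}(\O_{\delta_0})$; since $u_\delta\equiv 0$ on $\O_{\delta_0}\setminus\O$, these accumulation points lie in $W^{1,2}_0(\O)$. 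For the $\limsup$-inequality I would use the constant recovery sequence $v_\delta\equiv v$, $v\in W^{1,2}_0(\O)$: Young's inequality $\rho^{p/2}(x)\rho^{p/2}(x')\le\tfrac{1}{2}\big(\rho^p(x)+\rho^p(x')\big)$ together with the symmetry of $A_\delta$ bounds $a_{\delta,\kappa}(v,v)$ by $\int_{\O_{\delta_0}}\rho^p(x)\,K_\delta(v)(x)\,\mathrm{d}x$, where $K_\delta(v)(x)=\int_{\O_{\delta_0}}A_\delta(|x-x'|)(v(x)-v(x'))^2|x-x'|^{-2}\,\mathrm{d}x'$; since $K_\delta(v)\to|\nabla v|^2$ strongly in $L^1(\O_{\delta_0})$ by \cite[Corollary~1]{BBM} and $\rho^p\in L^\infty$, this gives $\limsup_{\delta\to 0}a_{\delta,\kappa}(v,v)\le a_{\rho^p}(v,v)$, hence $\limsup_\delta I_{\delta,\kappa}(v)\le I_{\rho^p}(v)$ and in particular $\limsup_\delta(\min I_{\delta,\kappa})\le I_{\rho^p}(u)$.

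The crux is the $\liminf$-inequality, and this is where the open-support structure built into $\tilde{\cD}^\delta$ is indispensable. Let $u_{\delta_j}\to w$ in $L^2(\O_{\delta_0})$ with $w\in W^{1,2}_0(\O)$, and write $\rho=\sum_{k=1}^K\alpha_k\chi_{B_k}$ with the $B_k$ open, pairwise disjoint and of full measure in $\O_\delta$ (full measure because $\rho\ge\underline{\rho}>0$ a.e.). For $\eta>0$ set $V_k^\eta=\{\,x\in B_k:\operatorname{dist}(x,\partial B_k)>\eta\,\}$. Discarding all of the nonnegative integrand of $a_{\delta_j,\kappa}(u_{\delta_j},u_{\delta_j})$ except the contribution of the pairwise disjoint sets $V_k^\eta\times V_k^\eta$, on each of which $\kappa\equiv\alpha_k^p$, gives $a_{\delta_j,\kappa}(u_{\delta_j},u_{\delta_j})\ge\sum_{k=1}^K\alpha_k^p\iint_{V_k^\eta\times V_k^\eta}A_{\delta_j}(|x-x'|)\frac{(u_{\delta_j}(x)-u_{\delta_j}(x'))^2}{|x-x'|^2}\,\mathrm{d}x\,\mathrm{d}x'$. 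On each fixed open set $V_k^\eta$ the nonlocal-to-local $\Gamma$-$\liminf$ inequality of \cite{ponce2004gamma,bellido2015} (the normalization \eqref{eq:kernelnrm} being chosen precisely so that the limiting constant equals $1$) bounds the $\liminf$ of the $k$-th term below by $\int_{V_k^\eta}|\nabla w|^2\,\mathrm{d}x$; summing, letting $\eta\to 0$ so that $V_k^\eta\uparrow B_k$, and using that $\cup_kB_k$ has full measure in $\O$, we obtain $\liminf_j a_{\delta_j,\kappa}(u_{\delta_j},u_{\delta_j})\ge\int_\O\rho^p|\nabla w|^2\,\mathrm{d}x=a_{\rho^p}(w,w)$; adding $-\ell(u_{\delta_j})\to-\ell(w)$ gives $\liminf_j I_{\delta_j,\kappa}(u_{\delta_j})\ge I_{\rho^p}(w)$. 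I expect the careful justification that one may localize onto the $V_k^\eta$ and invoke \cite{ponce2004gamma,bellido2015} there --- i.e.\ the genuine $\Gamma$-convergence content --- to be the main obstacle.

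Assembling the pieces in the usual way, along any subsequence with $u_{\delta_j}\to w$ in $L^2(\O_{\delta_0})$ we have $\limsup_\delta(\min I_{\delta,\kappa})\le I_{\rho^p}(u)\le I_{\rho^p}(w)\le\liminf_j(\min I_{\delta_j,\kappa})$, which forces the minimal values to converge to $I_{\rho^p}(u)$ and forces $I_{\rho^p}(w)=I_{\rho^p}(u)$, hence $w=u$ by uniqueness of the local solution; since every subsequence of $\{u_\delta\}$ admits a further subsequence converging in $L^2(\O_{\delta_0})$ to $u$, the whole family converges, which is the first displayed limit. Testing \eqref{eq:w} and \eqref{eq:w_loc} with their respective solutions gives $a_{\delta,\kappa}(u_\delta,u_\delta)=\ell(u_\delta)\to\ell(u)=a_{\rho^p}(u,u)$ by continuity of $\ell$ on $L^2(\O)$, which is the second limit, and consequently $\compnl(\rho)=\ell(u_\delta)\to\ell(u)=\comploc(\rho)$ for every $\rho\in\tilde{\cD}^\delta$. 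To reach an arbitrary $\rho\in\cD$, one already has $\liminf_\delta\compnl(\rho)\ge\comploc(\rho)$ from Proposition~\ref{prop:LI} applied to the constant sequence, so only a matching $\limsup$-bound is missing; this I would obtain by approximating $\rho$ by designs in $\tilde{\cD}^\delta$ and exploiting the monotonicity of $\compnl$ and $\comploc$ with respect to the design together with the continuity of $\comploc$ along designs converging in measure --- making this approximation argument work for all of $\cD$ being the remaining technical point, separate from the $\Gamma$-convergence above.
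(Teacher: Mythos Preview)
Your proposal is correct and follows essentially the same route as the paper: compactness via Lemma~\ref{poincare} and \cite[Theorem~1.2]{ponce2004estimate}; the $\limsup$-bound via the constant recovery sequence, Young's inequality, and \cite[Corollary~1]{BBM}; the $\liminf$-bound by dropping the off-diagonal blocks of the simple function $\rho=\sum_k\alpha_k\chi_{B_k}$ and invoking the localized $\Gamma$-$\liminf$ inequalities of \cite{ponce2004gamma,bellido2015} on each open $B_k$; and the conclusion via uniqueness of the local minimizer. Two minor remarks. First, your $\eta$-shrinkage to $V_k^\eta$ is unnecessary: the $\Gamma$-$\liminf$ results of \cite{ponce2004gamma,bellido2015} already apply directly on the open sets $B_k$, and the paper uses them there without further localization. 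Second, regarding the final sentence of the statement (``for any $\rho\in\cD$''): the paper's own proof covers only $\rho\in\tilde{\cD}^{\delta}$; the approximation argument you sketch --- monotone approximation from below by elements of $\tilde{\cD}^{\delta}$ combined with the monotonicity of the energies --- is carried out in Corollary~\ref{cor:limit}, but only for those $\rho$ admitting such an approximation by simple functions supported on \emph{open} sets, and the paper explicitly leaves the extension to arbitrary $\rho\in\cD^{\delta_0}$ as a conjecture. So your ``remaining technical point'' is genuinely open, not merely technical.
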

\begin{proof}
  Let \(\hat{\delta} \in ]0,\delta_0[\) and \(C_{\hat{\delta}}>0\) be those established
  in Lemma~\ref{poincare}.
  Since \(u_\delta = \cS_\delta(\kappa)\) solves~\eqref{eq:w}, for all \(\delta \in ]0,\hat{\delta}[\)
  we get the estimate
  \begin{equation*}
    C_{\hat{\delta}}\underline{\kappa} \| u_\delta\|^2_{L^2(\O)}
    \le  \underline{\kappa} a_{\delta,1}(u_\delta,u_\delta)
    \le  a_{\delta,\kappa}(u_\delta,u_\delta)
    \le \|f\|_{L^2(\O)}\|u_\delta\|_{L^2(\O)},
  \end{equation*}
  and consequently the uniform stability estimates
  \begin{equation*}
    \begin{aligned}
      \| u_\delta\|_{L^2(\O_{\hat{\delta}})} =
      \| u_\delta\|_{L^2(\O)}\le\frac{1}{C_{\hat{\delta}}\underline{\kappa}} \|f\|_{L^2(\O)},\quad\text{and}\quad
      \| u_\delta\|_{\cU^\delta_0} \le \frac{1}{C_{\hat{\delta}}^{1/2}\underline{\kappa}}\|f\|_{L^2(\O)}.
    \end{aligned}
  \end{equation*}
  Utilizing~\cite[Theorem~1.2]{ponce2004estimate} as in Lemma~\ref{poincare},
  we establish the existence of an \(L^2(\O_{\hat{\delta}})\)-accumulation point
  \(u_0 \in W^{1,2}_0(\O)\) of \(u_\delta\).
  Let \(\delta_j\), \(j=1,2,\dots\) be a sequence realizing convergence towards
  this accumulation point, that is
  \(\lim_{j\to\infty} \delta_j = 0\) and \(\lim_{j\to\infty} \|u_{\delta_j}-u_0\|_{L^2(\O)}=0\).
  As mentioned previously, in the following discussion both \(u_0\) and \(u\),
  the solution of the limiting local problem, are extended by \(0\) outside of \(\O\).

  \noindent\textbf{Step 1.} We claim that
  \begin{equation}\label{step1}
    \limsup_{j\to \infty}I_{\delta_j,\kappa}(u_{\delta_j})\le I_{\rho^p}(u).
  \end{equation}

  Indeed, owing to~\eqref{eq:kernelnrm} and~\cite[Theorem~1]{BBM} we have the bound
  \(\|u\|_{\cU^{\delta_j}_0} \le C \|u\|_{W^{1,2}(\O)}\) and therefore also the
  inclusion \(u\in \cU^{\delta_j}_0\).
  Consequently, owing to the variational characterization~\eqref{eq:nl_dir_pple}
  we have the inequalities \(I_{\delta_j,\kappa}(u_{\delta_j})\le I_{\delta_j,\kappa}(u)\),
  for each \(\delta_j\), \(j=1,2,\dots\).
  Therefore, in order to establish~\eqref{step1} it is sufficient to show the inequality
  \(\limsup_{j\to \infty} a_{\delta_j,\kappa}(u,u) \le a_{\rho^p}(u,u)\).
  But this inequality follows from the direct application of Proposition~\ref{prop:LI}
  to the constant sequence \(\rho\):
  \[\liminf_{j\to \infty} \left(-2I_{\delta_j,\kappa}(u_{\delta_j})\right)\ge \left(-2 I_{\rho^p}(u)\right)\]
  and therefore \eqref{step1} holds.

  \noindent\textbf{Step 2.} We claim that
  \begin{equation*}
    I_{\rho^p}(u_0) \le \liminf_{j\to\infty} I_{\delta_j, \kappa}(u_{\delta_j}).
  \end{equation*}
  Note that owing to the strong \(L^2\) convergence \(\lim_{j\to\infty} \|u_{\delta_j}-u_0\|_{L^2(\O)}=0\)
  it is sufficient to prove the inequality
  \begin{equation*}
    a_{\rho^p}(u_0,u_0) \le \liminf_{j\to\infty} a_{\delta_j, \kappa}(u_{\delta_j},u_{\delta_j}).
  \end{equation*}

  Let us recall that \(\rho\in\tilde{\cD}^{\delta_0}\) is a simple function supported on open sets,
  that is
  \[\rho(x)=\sum_{i=1}^I \alpha_i \chi_{B_i}(x),\]
  where \(I\in\N\), \(\alpha_i>0\) and the \(B_i\) open and pairwise disjoint such that \(\cl(\cup_{i=1}^I B_i) \supset \O_{\delta_0}\).
  Then
  \begin{equation*}
    \begin{aligned}
      a_{\delta_j,\kappa}(u_{\delta_j},u_{\delta_j}) &=
      \int_{\O_{\delta_0}} \int_{\O_{\delta_0}} \rho^{p/2}(x) \rho^{p/2}(x')
      A_{\delta_j}(|x-x'|)\frac{(u_{\delta_j}(x)-u_{\delta_j}(x'))^2}{|x-x'|^{2}}\dx'\dx
      \\ &=
      \sum_{i,j=1}^I \alpha_i^{p/2}\alpha_j^{p/2} \int_{B_i}\int_{B_j}
      A_{\delta_j}(|x-x'|)\frac{(u_{\delta_j}(x)-u_{\delta_j}(x'))^2}{|x-x'|^{2}}\dx'\dx
      \\ &\ge
      \sum_{i=1}^I \alpha_i^p \int_{B_i}\int_{B_i}
      A_{\delta_j}(|x-x'|)\frac{(u_{\delta_j}(x)-u_{\delta_j}(x'))^2}{|x-x'|^{2}}\dx'\dx.
    \end{aligned}
  \end{equation*}
  Applying the \(\Gamma\)-convergence results in \cite{ponce2004gamma,bellido2015} we conclude that
  \begin{equation}\label{eq:lbu0}
    \liminf_{j\to\infty} \int_{B_i}\int_{B_i}
    A_{\delta_j}(|x-x'|)\frac{(u_{\delta_j}(x)-u_{\delta_j}(x'))^2}{|x-x'|^{2}}\dx'\dx
    \ge \int_{B_i}|\nabla u_0(x)|^2\dx.
  \end{equation}
  Summing up these inequalities and recalling that \(u_0\) vanishes outside \(\O\)
  we can conclude that
  \begin{equation*}
    \liminf_{j\to\infty} a_{\delta_j,\kappa}(u_{\delta_j},u_{\delta_j})\ge
    a_{\rho^p}(u_0,u_0).
  \end{equation*}
  Since \(\lim_{j\to\infty} \|u_{\delta_j}-u_0\|_{L^2(\O)}=0\) this is sufficient
  to conclude the proof of step~2 for simple functions \(\rho\). Notice that the requirement of \(B_i\) to be open is necessary in order to apply the results in \cite{ponce2004gamma,bellido2015}.

  \noindent\textbf{Step 3: Conclusion.}
  Combining the inequalities obtained in steps~1 and~2 we obtain the
  following string of inequalities:
  \begin{equation*}
    \limsup_{j\to\infty} \underbrace{I_{\delta_j,\kappa}(u_{\delta_j})}_{=-0.5\comploc_{\delta_j}(\rho)}
    \le
    I_{\rho^p}(u)
    \le
    \underbrace{I_{\rho^p}(u_0)}_{=-0.5\comploc(\rho)}
    \le
    \liminf_{j\to\infty} \underbrace{I_{\delta_j,\kappa}(u_{\delta_j})}_{=-0.5\comploc_{\delta_j}(\rho)}.
  \end{equation*}
  The variational characterization of the local problem~\eqref{eq:loc_dir_pple}
  and the uniqueness of solutions to~\eqref{eq:w_loc} implies that \(u=u_0\).
  Therefore the family of solutions \(\{u_{\delta}\}_{0<\delta<\hat{\delta}}\),
  which is relatively compact in \(L^2(\O_{\hat{\delta}})\), has only one accumulation
  point, and the sequence \(\delta_j\), \(j=1,2,\dots\) selected in the beginning
  of the proof is in fact arbitrary.
  This finishes the proof of the proposition.
\end{proof}

The conclusions of Proposition~\ref{prop:limit} hold in fact for an even larger
class of material distributions than \(\tilde{\cD}^{\delta_0}\).

\begin{corollary}\label{cor:limit}
  Suppose that \(\rho\in \cD^{\delta_0}\) be such that there exists a
  sequence \(\{\rho_i\}_i\subset \tilde{\cD}^{\delta_0}\) such that
  \(\rho_i(x) \leq \rho(x)\) and
  \(\lim_{i\to \infty }\rho_i(x)=\rho(x)\) a.e. in \(x\in\O_{\delta_0}\).
  Then the conclusion of Proposition~\ref{prop:limit} holds for this \(\rho\).
\end{corollary}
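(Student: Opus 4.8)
The plan is to exploit the sandwiching structure already present in the proof of Proposition~\ref{prop:limit}, now squeezing between the approximating simple functions $\rho_i$ from below and the exact design $\rho$. The key observation is that the conductivity is monotone in $\rho$: since $\rho_i \le \rho$ pointwise and $p/2 \ge 0$, we have $\rho_i^{p/2}(x)\rho_i^{p/2}(x') \le \rho^{p/2}(x)\rho^{p/2}(x')$ a.e., hence $a_{\delta,\kappa_i}(v,v) \le a_{\delta,\kappa}(v,v)$ for every $v$, and similarly $a_{\rho_i^p}(v,v) \le a_{\rho^p}(v,v)$ in the local case (with $\kappa_i(x,x') = \rho_i^{p/2}(x)\rho_i^{p/2}(x')$). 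This monotonicity translates, via the variational (energy-minimization) characterizations~\eqref{eq:nl_dir_pple} and~\eqref{eq:loc_dir_pple} of the compliance, into monotonicity of the reduced compliances: $\compnl[\rho_i] \le \compnl[\rho]$ and $\comploc(\rho_i) \le \comploc(\rho)$ for each $i$. Indeed, a larger conductivity can only increase the energy functional pointwise in $v$, which raises the value at the minimizer.

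First I would record these monotonicity facts and combine them with Proposition~\ref{prop:limit} applied to each $\rho_i \in \tilde{\cD}^{\delta_0}$, which gives $\lim_{\delta\to 0}\compnl[\rho_i] = \comploc(\rho_i)$. Together with Proposition~\ref{prop:LI} applied directly to the constant sequence $\rho$ (noting $\rho^p$ trivially converges weak$^*$ to itself), which yields $\liminf_{\delta\to 0}\compnl[\rho] \ge \comploc(\rho)$, we obtain for each fixed $i$:
\begin{equation*}
  \comploc(\rho) \le \liminf_{\delta\to 0}\compnl[\rho] \le \limsup_{\delta\to 0}\compnl[\rho],
\end{equation*}
but to control the $\limsup$ from above I cannot simply use $\compnl[\rho_i] \le \compnl[\rho]$ --- that goes the wrong way. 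Instead the correct route is to bound $\limsup_{\delta\to 0}\compnl[\rho]$ by a $\limsup$-type inequality using $\rho$ itself as the recovering sequence and controlling the energy $a_{\delta,\kappa}(u,u)$ where $u = \cS^{\mathrm{loc}}(\rho^p)$. The argument of Step~1 in Proposition~\ref{prop:limit} (via~\cite[Theorem~1]{BBM} for the inclusion $u \in \cU_0^\delta$, then Proposition~\ref{prop:LI} again) does \emph{not} use that $\rho$ is simple, only that $\rho^p$ is bounded; so Step~1 already gives $\limsup_{\delta\to 0}\compnl[\rho] \le \comploc(\rho)$ for arbitrary $\rho \in \cD^{\delta_0}$. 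Combined with Proposition~\ref{prop:LI} this alone proves $\lim_{\delta\to 0}\compnl[\rho] = \comploc(\rho)$, and the hypothesis on $\rho_i$ is actually needed only for the companion claim that $\lim_{\delta\to 0}\|u_\delta - u\|_{L^2(\O)} = 0$.

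The main obstacle is therefore the convergence of the states, not of the compliance values. To get $u_\delta \to u$ in $L^2(\O)$ I would repeat the compactness argument from the opening of the proof of Proposition~\ref{prop:limit}: the uniform bounds from Lemma~\ref{poincare} give relative compactness of $\{u_\delta\}$ in $L^2(\O_{\hat\delta})$ with accumulation points in $W_0^{1,2}(\O)$; it then suffices to show every accumulation point $u_0$ equals $u$. For this I need a lower-semicontinuity inequality $a_{\rho^p}(u_0,u_0) \le \liminf_j a_{\delta_j,\kappa}(u_{\delta_j},u_{\delta_j})$ playing the role of Step~2. Here is where $\rho_i$ enters: for each $i$, monotonicity gives $a_{\delta_j,\kappa}(u_{\delta_j},u_{\delta_j}) \ge a_{\delta_j,\kappa_i}(u_{\delta_j},u_{\delta_j})$, and since $\rho_i \in \tilde{\cD}^{\delta_0}$ is simple on open sets the $\Gamma$-convergence estimate~\eqref{eq:lbu0} applies to give $\liminf_j a_{\delta_j,\kappa_i}(u_{\delta_j},u_{\delta_j}) \ge a_{\rho_i^p}(u_0,u_0)$. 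Letting $i \to \infty$ and using monotone convergence $\rho_i^p \uparrow \rho^p$ (justified by $\rho_i \le \rho$, $\rho_i \to \rho$ a.e., and the monotone convergence theorem applied to $\int |\nabla u_0|^2 \rho_i^p$) yields $\liminf_j a_{\delta_j,\kappa}(u_{\delta_j},u_{\delta_j}) \ge a_{\rho^p}(u_0,u_0)$. With this Step~2 in hand, the three-line sandwich from Step~3 of Proposition~\ref{prop:limit} closes verbatim: $\limsup I_{\delta_j,\kappa}(u_{\delta_j}) \le I_{\rho^p}(u) \le I_{\rho^p}(u_0) \le \liminf I_{\delta_j,\kappa}(u_{\delta_j})$, forcing $u_0 = u$ by uniqueness for~\eqref{eq:w_loc}, hence the full family converges and $\lim_{\delta\to0}\|u_\delta - u\|_{L^2(\O)} = 0$ together with convergence of the energies, which is exactly the conclusion of Proposition~\ref{prop:limit} now established for this larger class.
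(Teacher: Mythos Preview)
Your proposal is correct and follows essentially the same route as the paper: observe that Steps~1 and~3 of Proposition~\ref{prop:limit} do not use the simple structure of \(\rho\), and amend only Step~2 by bounding \(a_{\delta_j,\kappa}(u_{\delta_j},u_{\delta_j}) \ge a_{\delta_j,\kappa_i}(u_{\delta_j},u_{\delta_j})\) via pointwise monotonicity of the conductivity, invoking the \(\Gamma\)-liminf estimate~\eqref{eq:lbu0} for the simple \(\rho_i\), and then passing \(i\to\infty\). One small slip: you write ``monotone convergence \(\rho_i^p \uparrow \rho^p\)'', but the hypothesis only gives \(\rho_i \le \rho\) and \(\rho_i \to \rho\) a.e., not monotonicity in \(i\); the paper (and your own parenthetical justification, in effect) uses dominated convergence with the integrable majorant \(\overline{\rho}^p|\nabla u_0|^2\), which is what you should cite here.
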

\begin{proof}
  Note that Steps~1 and~3 of the proof above do not utilize the simple structure of
  \(\rho\).
  Thus, it is only necessary to amend Step~2 of the proof, which we do here.

  Owing to our assumptions, there exists a sequence of non-negative simple functions
  \(\rho_{i}\), which approximates \(\rho\) almost everywhere in \(\O_{\delta_0}\)
  from below.
  Let \(\kappa_{i}(x,x')=\rho_{i}^{p/2}(x)\rho_{i}^{p/2}(x')\).
  Then \(\kappa_i(x,x') \leq \kappa(x,x')\) for almost all \((x,x')\in \O_{\delta_0}^2\).
  Therefore, in view of Proposition~\ref{prop:limit}, for each \(i=1,2,\dots\) we can write:
  \begin{equation*}
    \liminf_{j\to\infty} a_{\delta_j,\kappa}(u_{\delta_j},u_{\delta_j})
    \ge
    \liminf_{j\to\infty} a_{\delta_j,\kappa_i}(u_{\delta_j},u_{\delta_j})
    \ge
    a_{\rho_i^p}(u_0,u_0).
  \end{equation*}
  It remains to take the limit with respect to \(i\to\infty\) and utilize
  the dominated Lebesgue convergence theorem to reach the inequality claimed
  in Step~2 of the proof of Proposition~\ref{prop:limit}.
\end{proof}

\begin{remark}
  Note that one can approximate an arbitrary bounded and measurable \(\rho\) pointwise
  with a non-decreasing sequence of \emph{simple} functions, see~\cite[Theorem~1.17]{rudin}.
  Unfortunately, in Corollary~\ref{cor:limit} we need that these simple functions are
  additionally \emph{supported} on open, and not just measurable, sets \(B_i\).
  Therefore we have not succeeded in showing the result for a general
  \(\rho\in \cD^{\delta_0}\).
  We actually believe, and conjecture, that Corollary~\ref{cor:limit} is true
  without this additional restriction.
  \end{remark}

\subsection{Discussion of \(\Gamma\)-convergence}

General \(\Gamma\)-convergence theory establishes that \(\Gamma\)-convergence together with equi-coercivity implies both convergence of infima to the minimum of the limit problem, and that any cluster point of any sequence of minimizers is a minimizer of the limit problem \cite{braides_beginners}. As we have pointed out above, the unfortunate fact that convergence of functional arguments for liminf result, Proposition \ref{prop:LI}, is not weak\(^*\) convergence in \(L^\infty(\O_{\delta_0})\) spoils the possibility of a general \(\Gamma\)-convergence result for any \(p\in]1,2]\). Furthermore, local compliance for \(p\in ]1,2]\) is not lower semicontinous and consequently cannot be a \(\Gamma\)-limit. In addition, the restricting hypothesis on the admissible material distributions in the statement of Corollary~\ref{cor:limit} rules out the possibility of obtaining a general \(\Gamma\)-convergence result  even for \(p=1\).
Still, we can point out at a few inequalities characterizing the relationship
between the non-local and the local optimal control problems in the convex case \(p=1\).

On the one hand, for each \(\delta>0\) we can take a minimizer \(\rho_\delta\)
for the non-local compliance minimization.
Clearly the sequence \(\rho_\delta\) is bounded in \(L^\infty(\O_{\delta_0})\).
Therefore, there exists a sequence \(\delta_j\), with \(\delta_j\to 0\), and
\(\rho\in \cD\), such that \(\rho_{\delta_j}\rightharpoonup \rho\) weak\(^*\) in \(L^\infty(\O_{\delta_0})\).
Taking into account that \(p=1\), Proposition~\ref{prop:LI} implies
\[m\le \comploc(\rho)\le \liminf_{j\to \infty} \mathfrak{c}_{\delta_j}(\rho_{\delta_j}) =  \liminf_{j\to \infty} m_{\delta_j},\]
where
\[m=\inf_{\rho\in \cD} \comploc(\rho), \quad m_\delta =\min_{\rho \in \cD^\delta} \mathfrak{c}_{\delta}(\rho).\]
As this argument can be made for any sequence $\delta_j$ converging to zero, we have that
\begin{equation}\label{eq:infimaineq} m\le \liminf_{\delta\to 0} m_\delta.\end{equation}

Furthermore, if \(\inf_{\rho\in \cD} \comploc(\rho)\) is attained at \(\hat{\rho}\in \cD\)
satisfying the assumptions of Corollary~\ref{cor:limit}, then the inequality~\eqref{eq:infimaineq} becomes equality:
\begin{equation*}
  m= \comploc(\hat{\rho}) = \lim_{j\to \infty} \mathfrak{c}_{\delta_j}(\hat{\rho})
  \geq \limsup_{j\to \infty} m_{\delta_j},
\end{equation*}
for any \(\delta_j\to 0\).

Finally, if Corollary~\ref{cor:limit} were true for any \(\rho\in \cD^{\delta_0}\),
then we would have that \(\compnl\) \(\Gamma\)-converges to \(\comploc\) as \(\delta\) goes to 0.
Indeed, weak\(^*\) topology is metrizable on bounded sets of \(L^\infty(\O_{\delta_0})\)
(cf.~\cite[Theorem~3.28]{brezis}).
Therefore \(\mathcal{A}^{\delta_0}\) equipped with the weak\(^*\) topology is a metric space, and therefore \(\Gamma\)-convergence requires only two facts to hold, namely, limsup and liminf inequalities.
These would then be direct consequences of Proposition~\ref{prop:LI} and Corollary~\ref{cor:limit},
respectively.

% !TEX root = NonlocalOD.tex
%
\section{Numerical experiments}
\label{sec:num}
The objective of this section is to numerically illustrate the behaviour
of the proposed optimization model, with emphasis on the results
established in the  previous sections.
All our numerical experiments are performed with \(n=2\),
\(\O=]0,1[^n\), \(A_\delta(|x|) = c_{\text{nrm}}|x|^{-(n+2s-2)}\max\{0,\delta^2-|x|^2\}^\beta\),
with \(\beta = 3.0\) and \(c_{\text{nrm}}\) determined from~\eqref{eq:kernelnrm}.
Additionally, we use \(\underline{\rho} = 10^{-3}\), \(\overline{\rho} = 1.0\),
\(s \in \{1/3,2/3\}\), \(\delta \in \{0.05,0.1,0.2\}\), and \(p \in \{1,2\}\).
\subsection{Galerkin FEM discretization of the state equations}
The variational formulation~\eqref{eq:w} with a symmetric and coercive bilinear
form \(a_{\delta,\kappa}(\cdot,\cdot)\) naturally lends itself for an application
of Galerkin method.
In our numerical experiments we only consider polyhedral sets \(\O\),
and therefore we proceed in the standard fashion by decomposing
 \(\Od\) into a union of shape-regular simplices \(\Od^h\),
where \(h>0\) will denote a characteristic size (diameter) of the elements in
our mesh.
We make sure that \(\Od^h\) conforms with the subdivision of
\(\Od\) into \(\O\) and \(\Gnl\), see Fig.~\ref{fig:mesh}.
\begin{figure}
  \centering
  \includegraphics[width=0.32\columnwidth]{./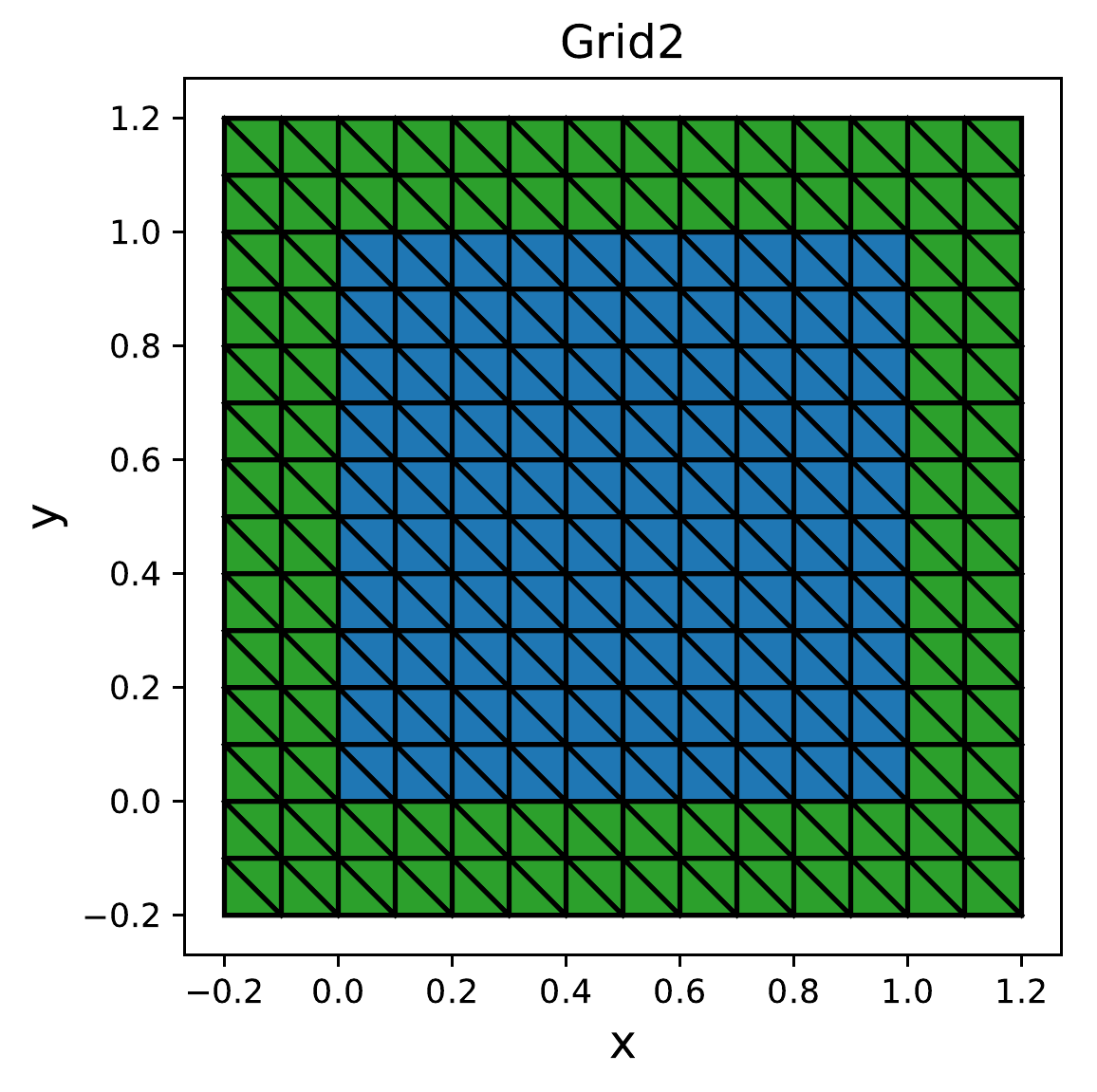}
  \includegraphics[width=0.32\columnwidth]{./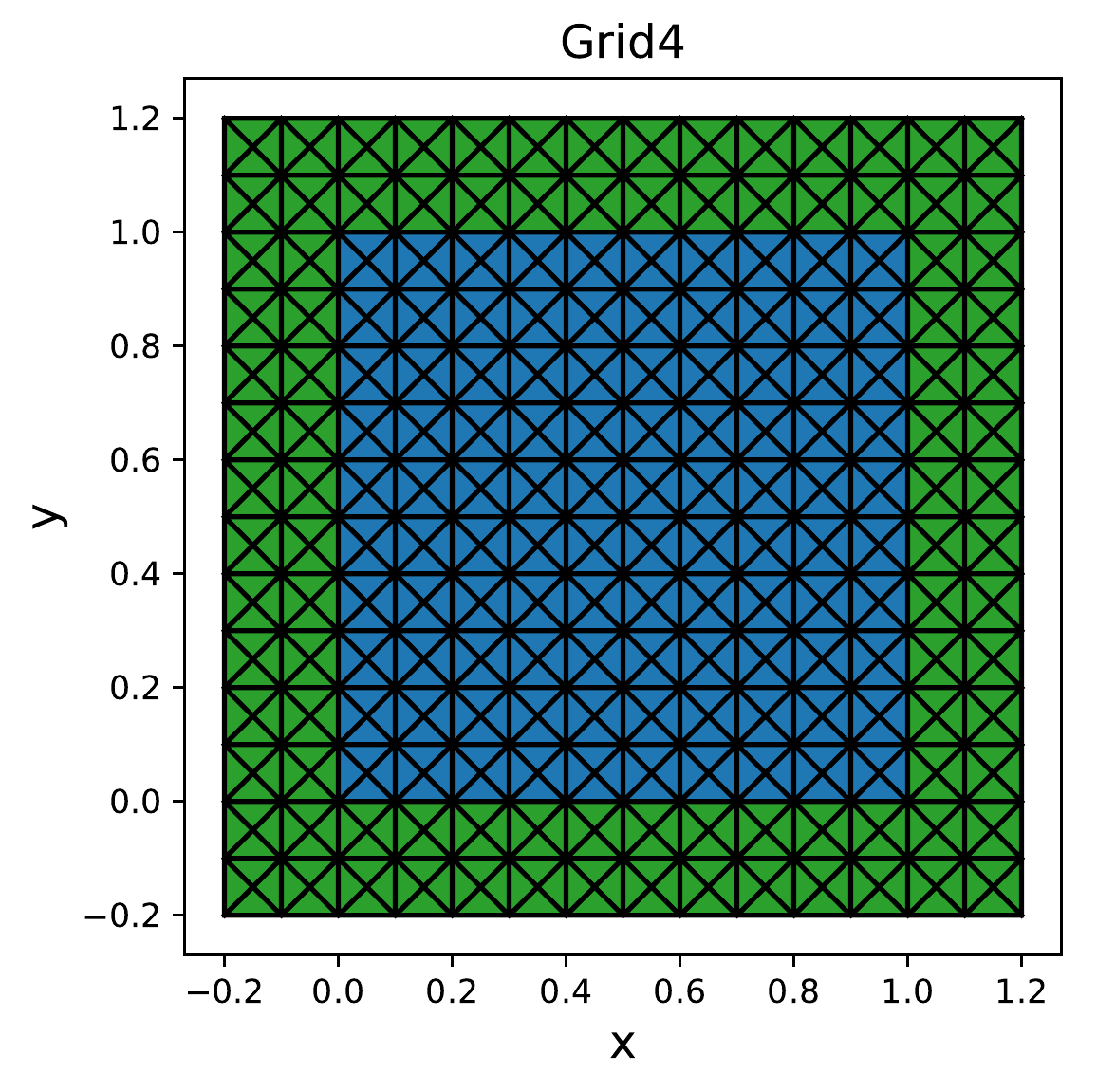}
  \includegraphics[width=0.32\columnwidth]{./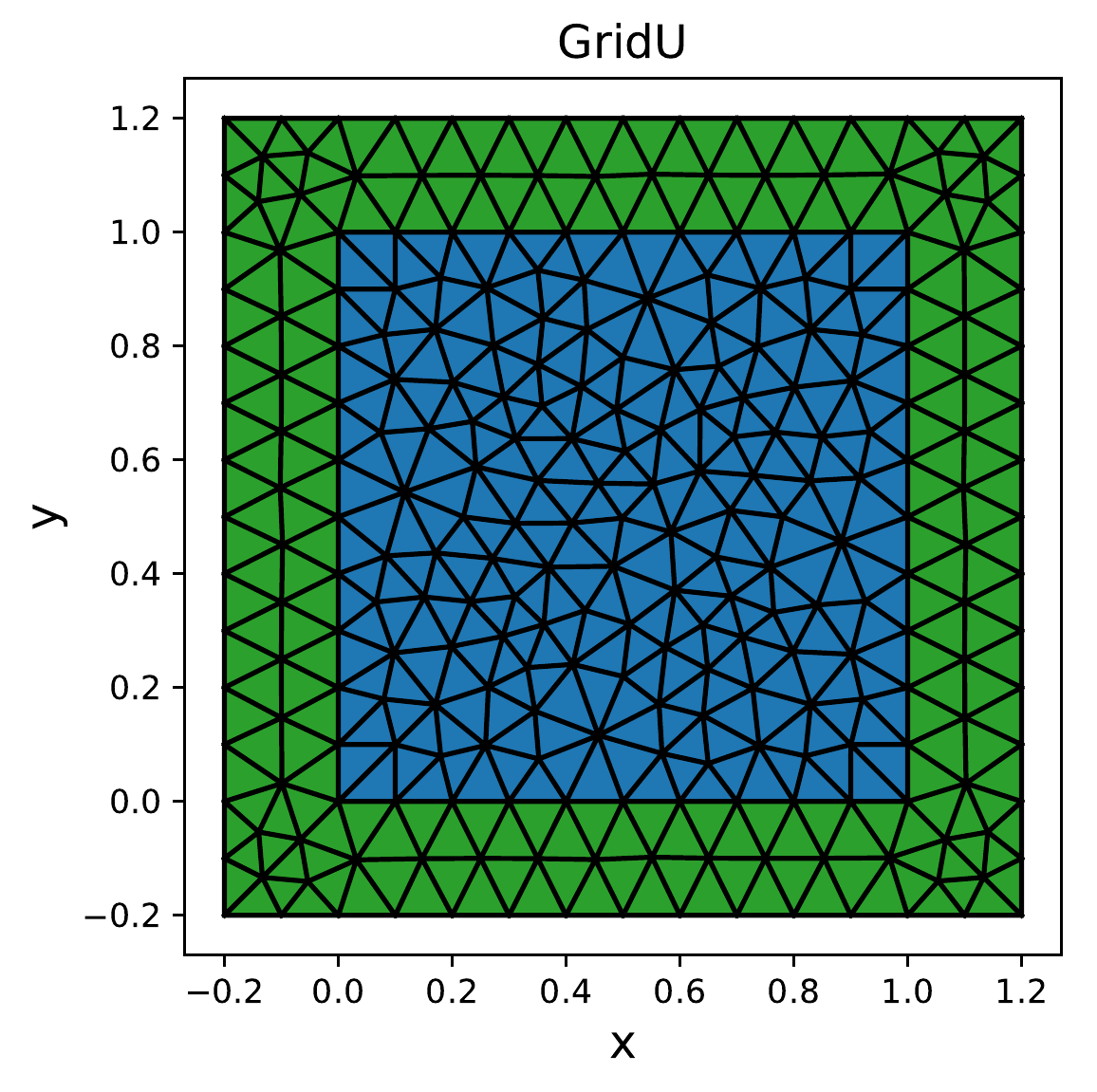}
  \caption{%
  Three types of meshes used in the present work and their corresponding labels.
  In this figure, we use \(\delta = 0.2\), and decomposition of
  \(\Od\) into \(\O\) and \(\Gnl\) is illustrated with color.
  Gmsh is utilized for generating unstructured meshes~\cite{geuzaine2009gmsh}.%
  }
  \label{fig:mesh}
\end{figure}
\begin{figure}
  \centering
  \includegraphics[width=0.35\columnwidth]{./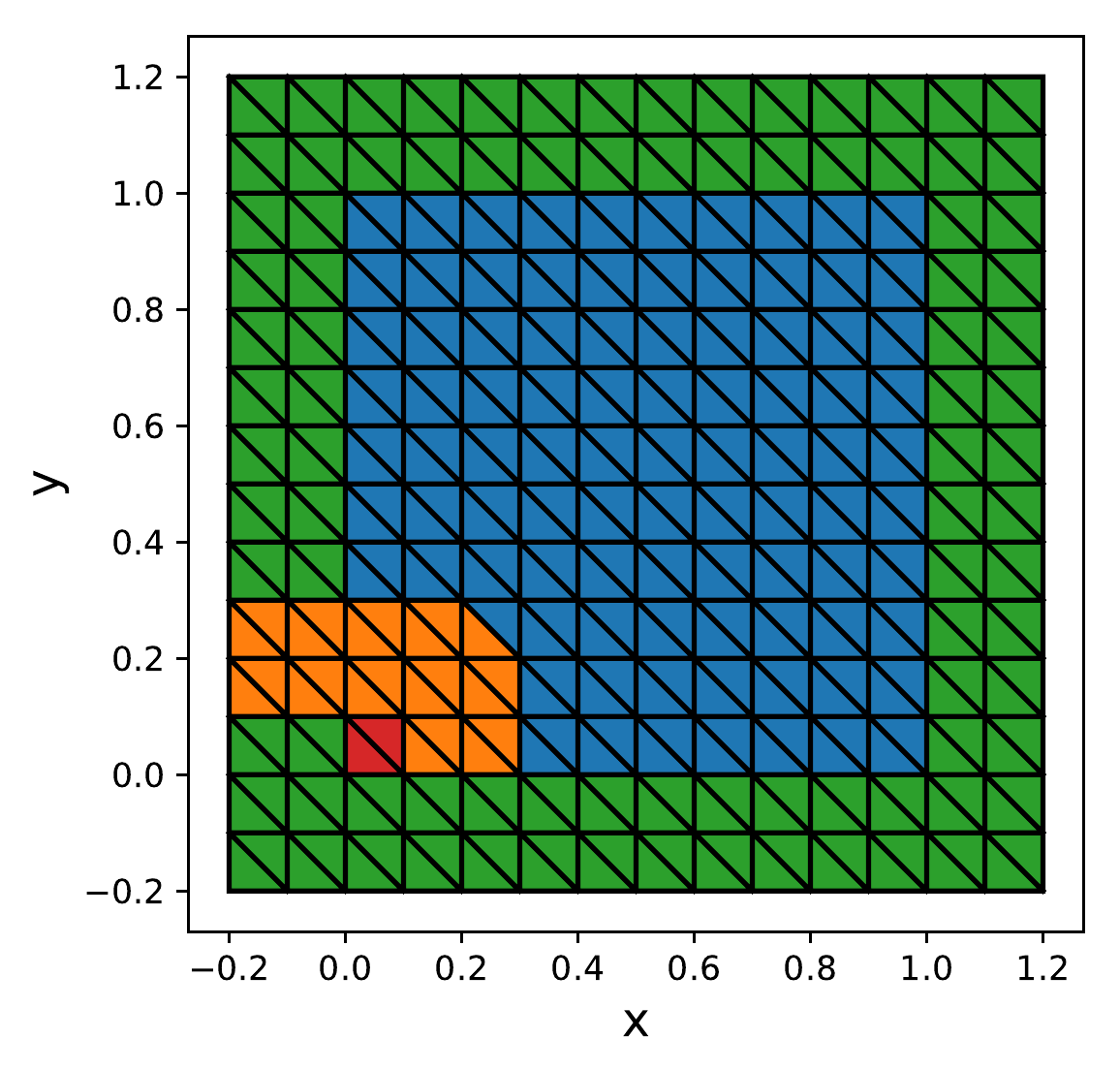}
  \includegraphics[width=0.41\columnwidth]{./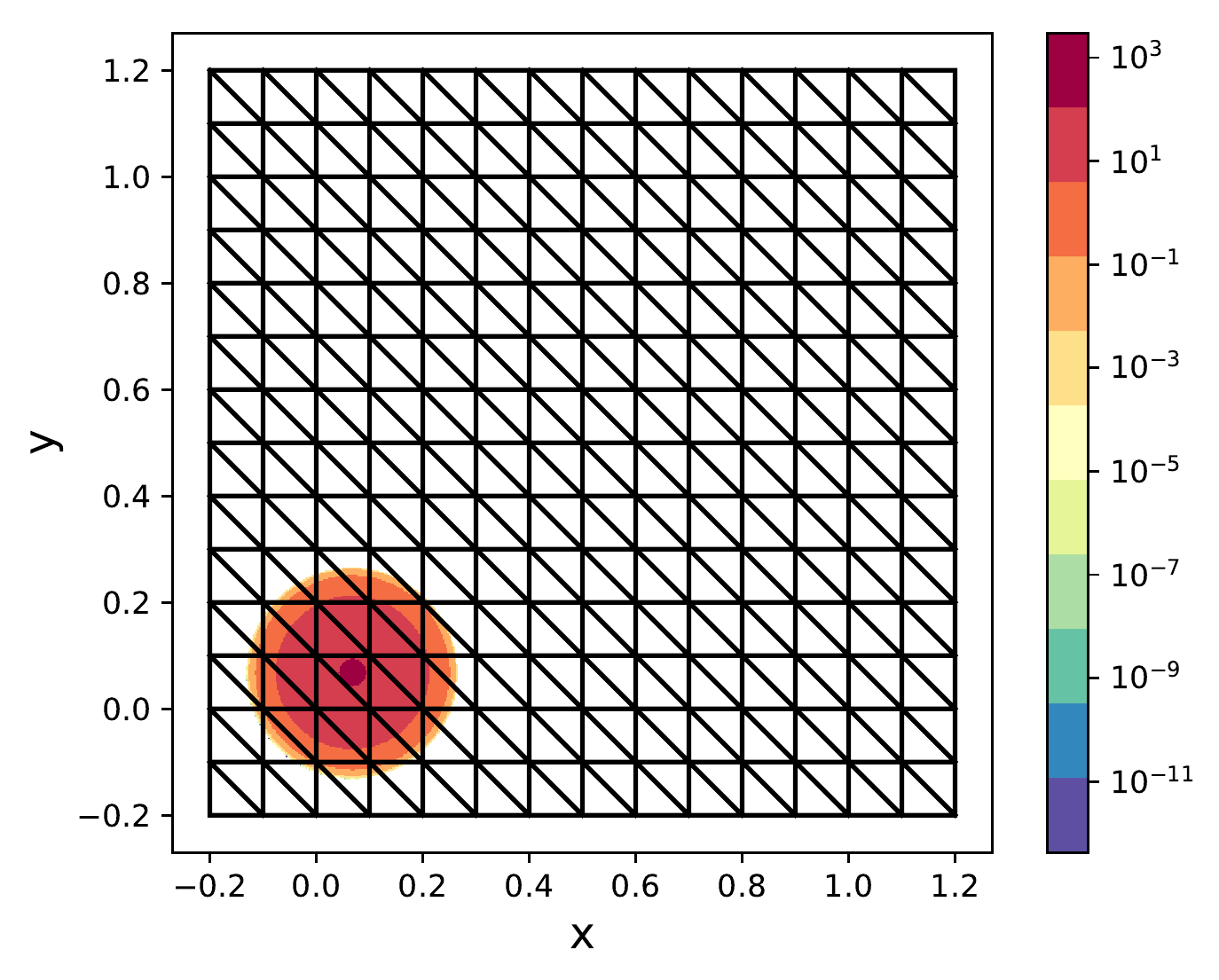}
  \caption{%
  Left: reference elements \(T_1\) (red), and elements \(T_2\) (orange), participating
  in the pre-computation of the integrals.
  Note that only half of the elements in the proximity of the reference cell need
  to be considered owing to the symmetry of the bilinear form with respect to the
  change of the integration variables \(x \leftrightarrow x'\).
  Right: contour plot of \(A_{\delta}(|\cdot-x|)\) for \(x\) located at the
  barycenter of one of the reference elements.
  \(\delta = 0.2\) and \(s=1/3\) is used.
  }
  \label{fig:mesh2}
\end{figure}
Functions in \(\cU\) will be approximated with continuous piecewise-linear
polynomials \(u^h \in \cU^h = \{\, v^h \in \cU \cap C^0(\bar{\O}_\delta) :
v^h|_T \text{\ is a linear polynomial}, \forall T \in \Od^h\,\}\).
Naturally we put \(\cU^h_0 = \cU^h \cap \cU_0\), which leads us to the following
discrete variational principle (system of linear algebraic equations): find
\(u_h \in \cU^h_0\), such that
\begin{equation}\label{eq:wh}
  a_{\delta,\kappa}(u_h,v_h) = \ell(v_h), \quad \forall v_h \in \cU_0^h.
\end{equation}

Assembly process for the right hand side of this system is completely standard,
whereas in order to assemble the left hand side of this system we need to loop
over all \emph{pairs} \((T_1,T_2)\in \Od^h\times \Od^h\) of elements in the mesh,
which are not further than the distance of \(2\delta\) from each other, and
compute the local integral contribution to \( a_{\delta,\kappa}(u_h,v_h)\),
that is, the integral over \(T_1 \times T_2\).
Note that when \(\bar{T}_1 \cap \bar{T}_2\neq\emptyset\), the integrand is
unbounded; even when \(\bar{T}_1 \cap \bar{T}_2=\emptyset\) the integrand is not
a polynomial function.
In our implementation we utilize the quadratures described in~\cite{chernov2015quadrature},
which are taylored for a nearly identical situation.\footnote{%
It should be noted that our integrands do not always satisfy the assumptions imposed
in~\cite{chernov2015quadrature} as the terms \((u(x)-u(x'))/|x-x'|\) and
\((v(x)-v(x'))/|x-x'|\) are only bounded and not continuous across the
boundaries \(\bar{T}_1 \cap \bar{T}_2\).}
In order to avoid commiting a variational crime by not integrating the bilinear
form precisely, we first estimate how many quadrature points we need for the
accurate integration; the results are reported in Fig.~\ref{fig:integration}.
Despite the fact that the assumptions imposed in~\cite{chernov2015quadrature}
are not always satisfied we observe exponential convergence of the quadratures.
However, note the unusial scaling of the \(x\)-axis; in the most singular case
\(k=2\) corresponding to \(T_1=T_2\) we need approximately \(15^5 = 759375\)
quadrature points (when using Gauss--Jacobi quadrature in the singular direction,
see~\cite{chernov2015quadrature} for details) to achieve nearly full IEEE double
precision accuracy before the round off errors start to play a role!
\begin{figure}
  \centering
  \begin{tabular}{cc}
    \includegraphics[width=0.48\columnwidth]{./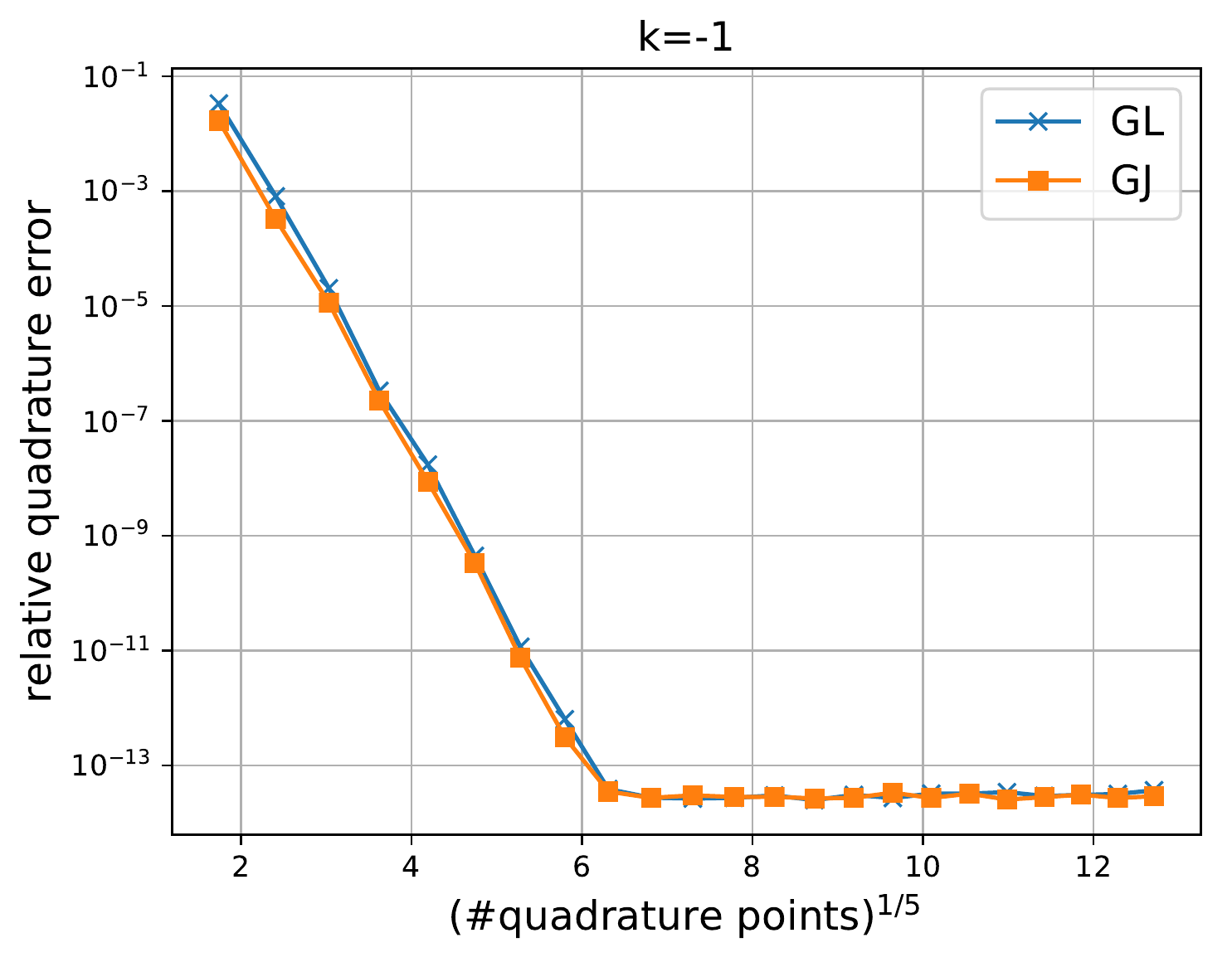}&
    \includegraphics[width=0.48\columnwidth]{./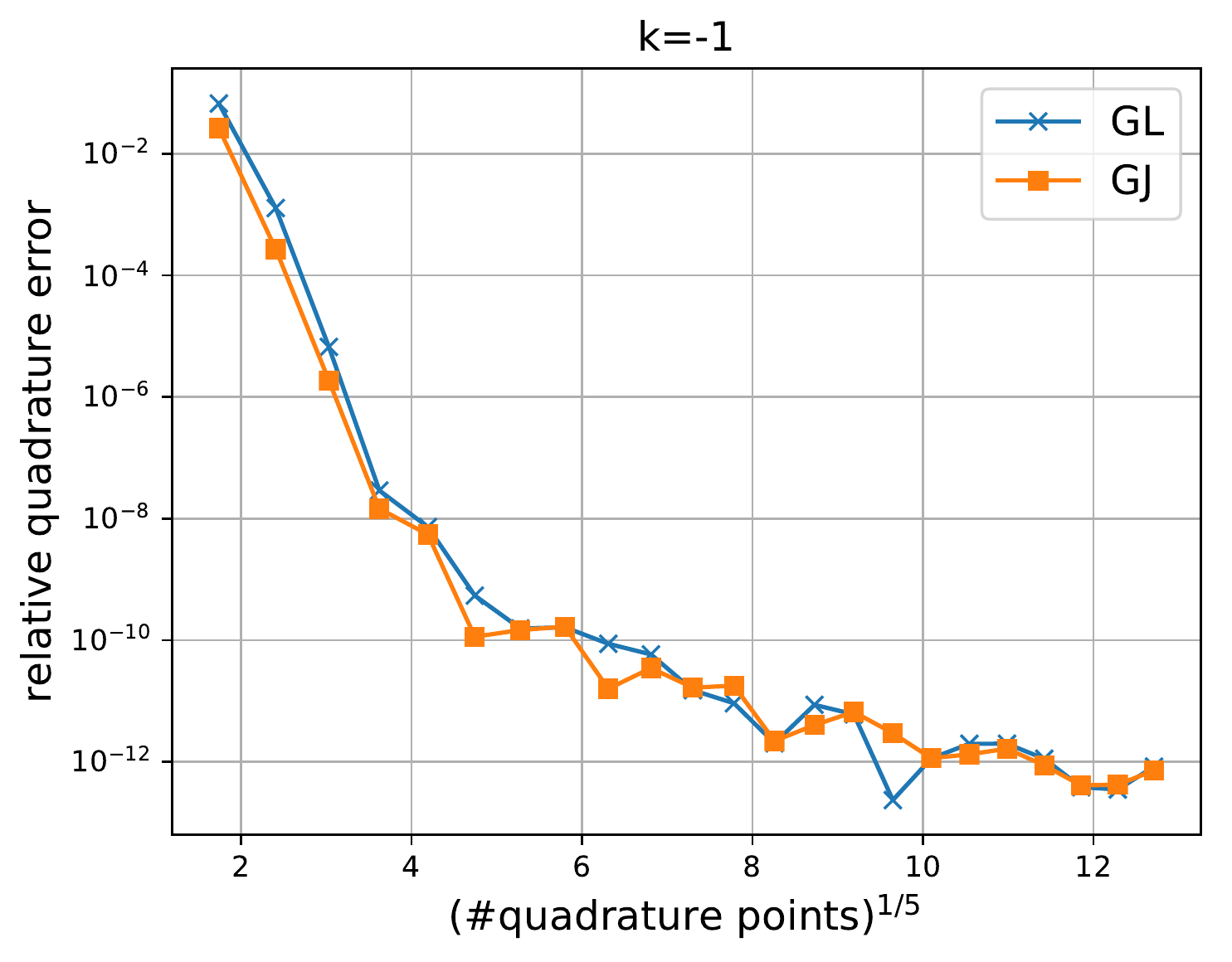}\\
    (a) & (b)\\
    \includegraphics[width=0.48\columnwidth]{./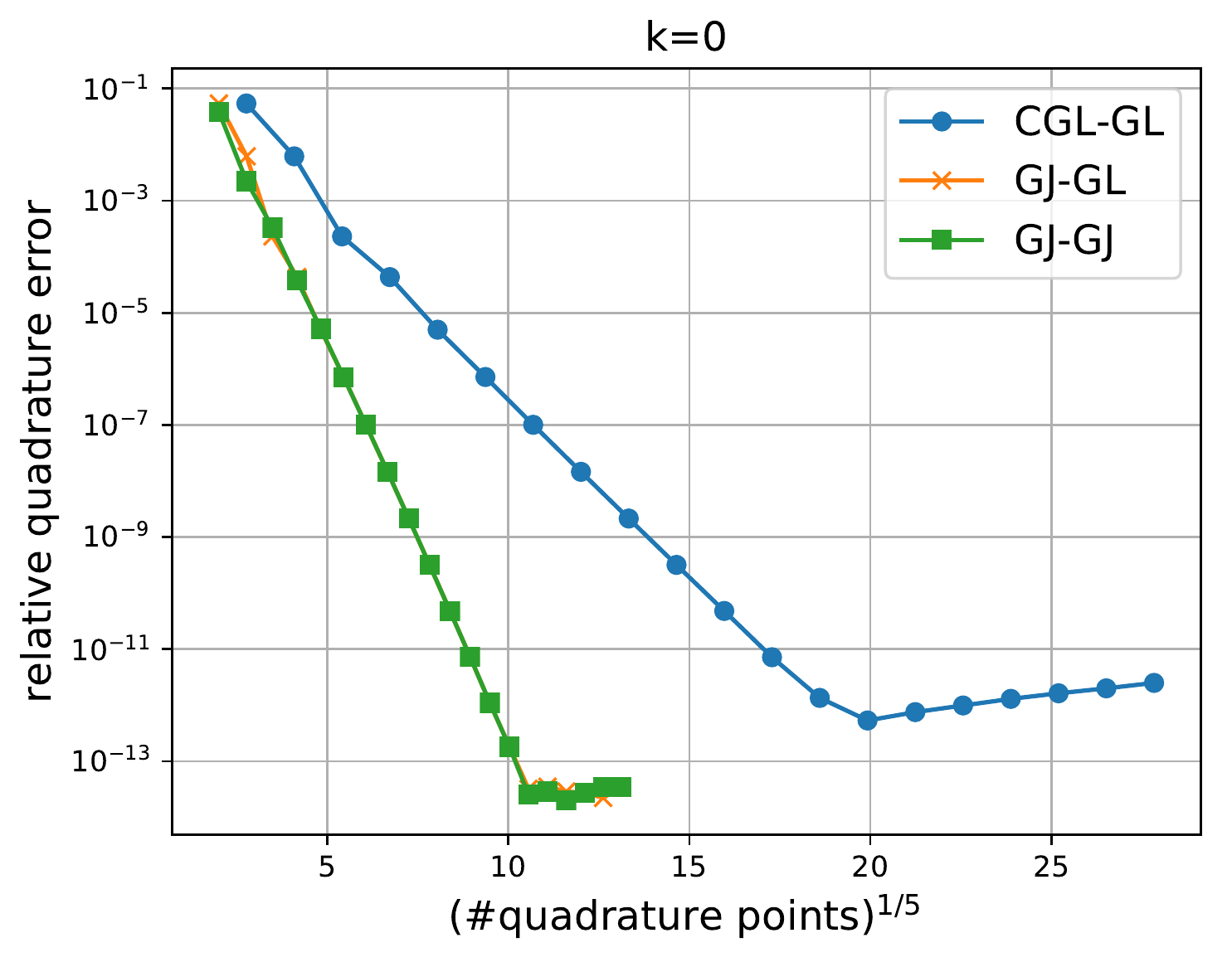}&
    \includegraphics[width=0.48\columnwidth]{./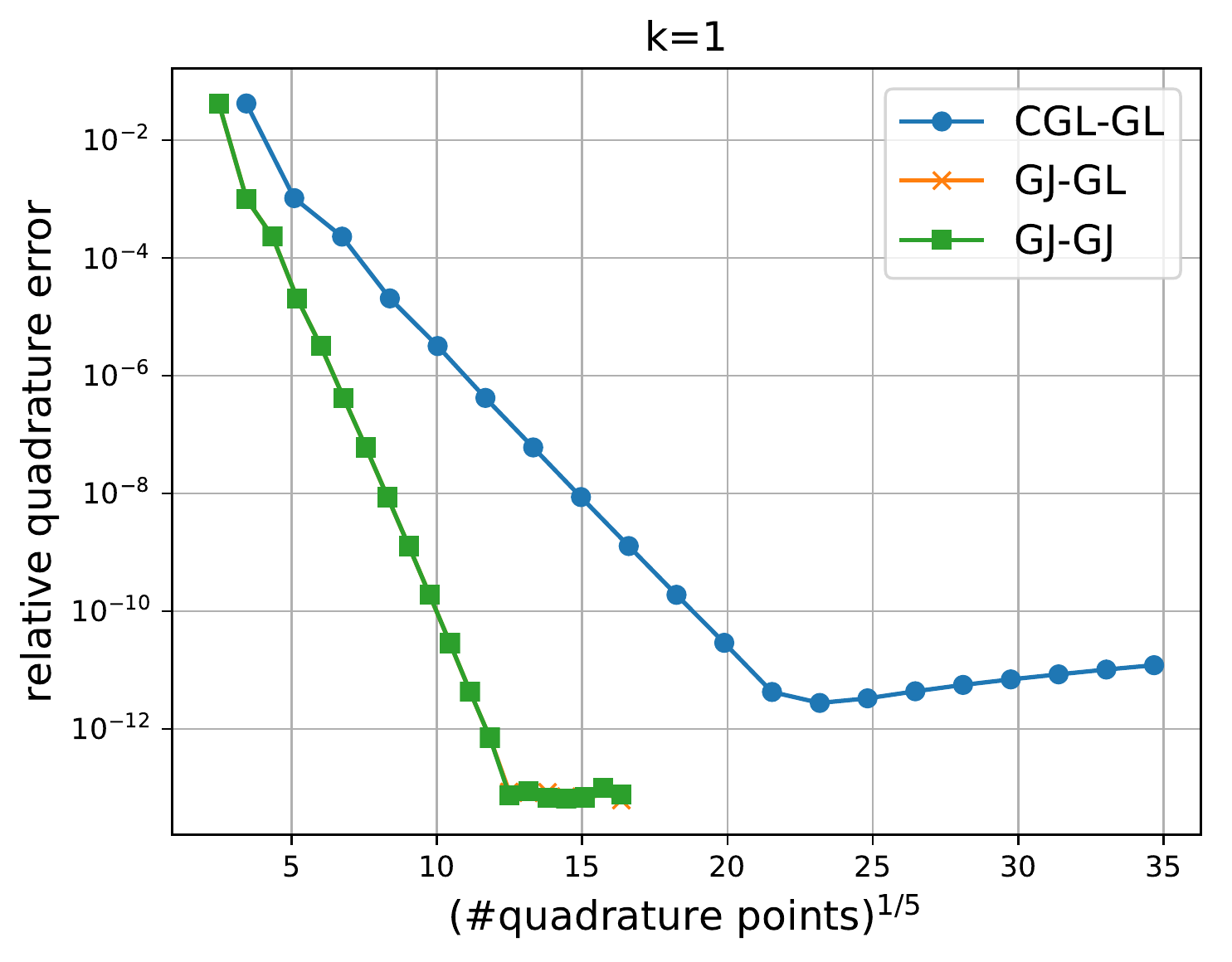}\\
    (c) & (d)\\
    \includegraphics[width=0.48\columnwidth]{./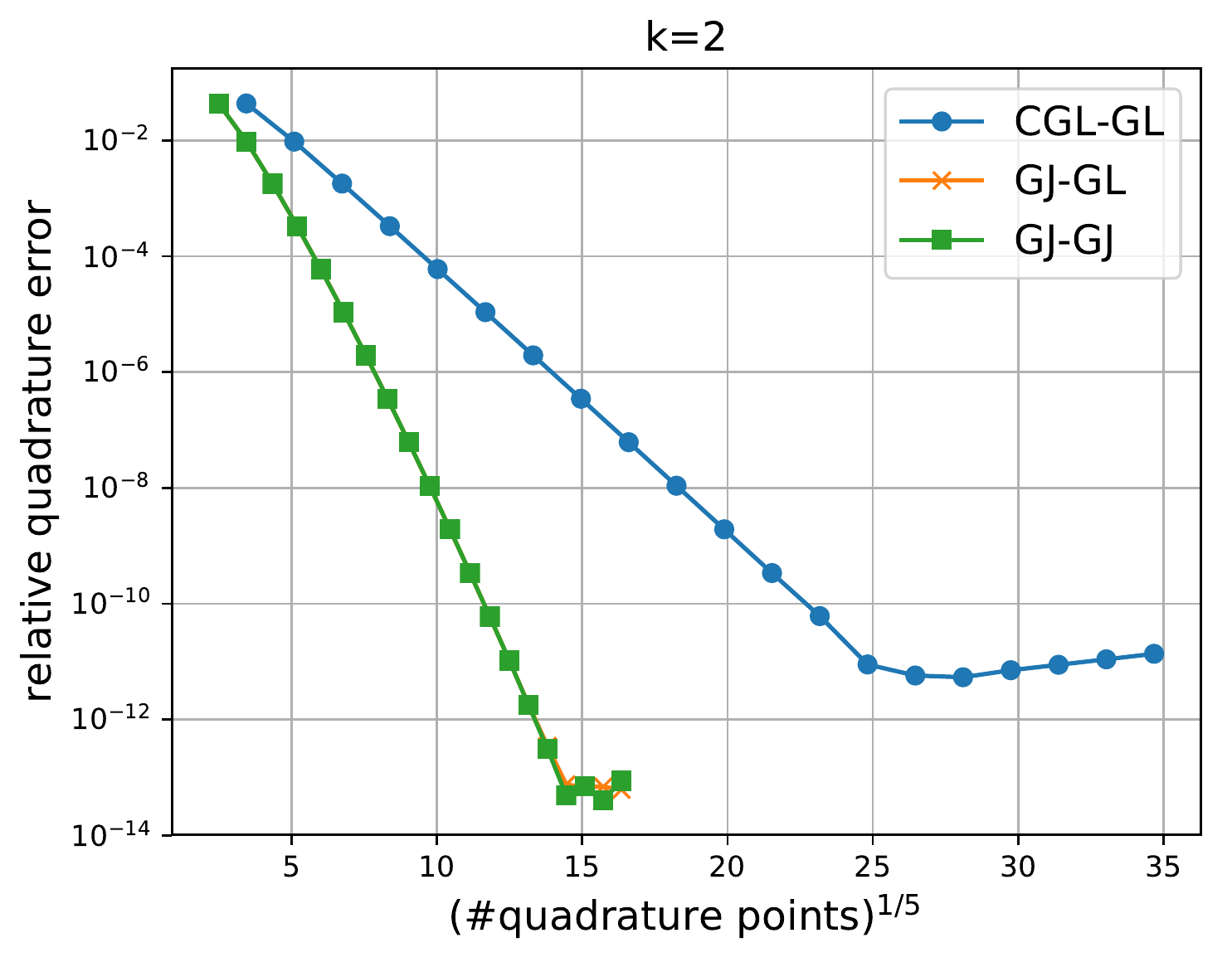}&\\
    (e)&\\
  \end{tabular}
  \caption{%
  Verification of the exponential convergence rate of the quadratures~\cite{chernov2015quadrature},
  which we use for computing elemental stiffness matrices; \(s=1/3\) in this simulation.
  \(k\) is the dimension of the intersection \(\bar{T}_1 \cap \bar{T}_2\);
  \(k=-1\) means \(\bar{T}_1 \cap \bar{T}_2=\emptyset\) and therefore the integrand
  is regular; see \cite{chernov2015quadrature} for details.
  (a): \(k=-1\) and \(\diam(\bar{T}_1 \cup \bar{T}_2)<\delta\);
  (b): \(k=-1\) and \(\diam(\bar{T}_1 \cup \bar{T}_2)>\delta\);
  (c): \(k=0\); (d): \(k=1\); (e): \(k=2\).%
  }
  \label{fig:integration}
\end{figure}

Because of such a high cost of elemental integration, and because the number of
integrals in a quasi-uniform grid grows as \(O(\delta^n h^{-2n})\), we focus
on regular grids (see Fig.~\ref{fig:mesh}).
In this setting we only need to evaluate integrals for a fixed ``reference''
\(T_1\) and varying \(T_2\), thereby bringing the number of integrals down to
\(O(\delta^n h^{-n})\) as shown in Fig.~\ref{fig:mesh2}.
Even with this preprocessing, both the work and memory requirements for the global
matrix assembly scale as \(O(\delta^n h^{-2n})\).
Putting this into perspective, for Grid2 with \(h=2^{1/2}/320\) (i.e., each side
of the unit square \(\O\) is discretized with \(320\) elements) we need approximately
\(4.7\)Mb to store the precomputed integrals and approximately \(3.87\cdot 10^3\)Mb to store
the assembled matrix.
Direct solver such as UMFPACK~\cite{davis2004algorithm} quickly run out of memory
for problems with \(\delta=0.1\), and we switch to CG-accelerated Ruge--Stuben
AMG solver PyAMG~\cite{OlSc2018} (even smoothed aggregation is too much memory
and computationally demanding).
\subsubsection{\(h\)-convergence test}
In order to test the code, we use the method of manufactured solutions, see
e.g.~\cite{roache2002code}.
We put \(\delta=0.1\), \(s\in\{1/3,2/3\}\), \(\kappa\equiv 1\), and let the analytical solution
to be \(u_{\text{ana}}(x,y) = [x(1-x)y(1-y)]^2 \sin(2\pi(x+y^2))\), when \(x,y \in \O\), and
zero otherwise.
The corresponding right hand side can be (numerically) computed as
\begin{equation}\label{eq:anarhs}
  f(x,y) = -2\lim_{\epsilon\to 0}\int_{\epsilon}^\delta \int_{0}^{2\pi}
  A_{\delta}(r)\frac{u_{\text{ana}}(x+r\cos(\theta),y+r\sin(\theta))-u_{\text{ana}}(x,y)}{r}
  \,\mathrm{d}\theta\,\mathrm{d}r,
\end{equation}
which is evaluated using the standard adaptive quadrature package in SciPy.
The results of this test are shown in Fig.~\ref{fig:hconv}.
In both cases we do observe convergence, although it is difficult to say
whether we reach the asymptotic convergence rate (we run the simulations until
we run out of memory), or indeed whether the adaptive quadratures
evaluate~\eqref{eq:anarhs} accurately enough.
\begin{figure}
  \centering
  \includegraphics[width=0.49\columnwidth]{./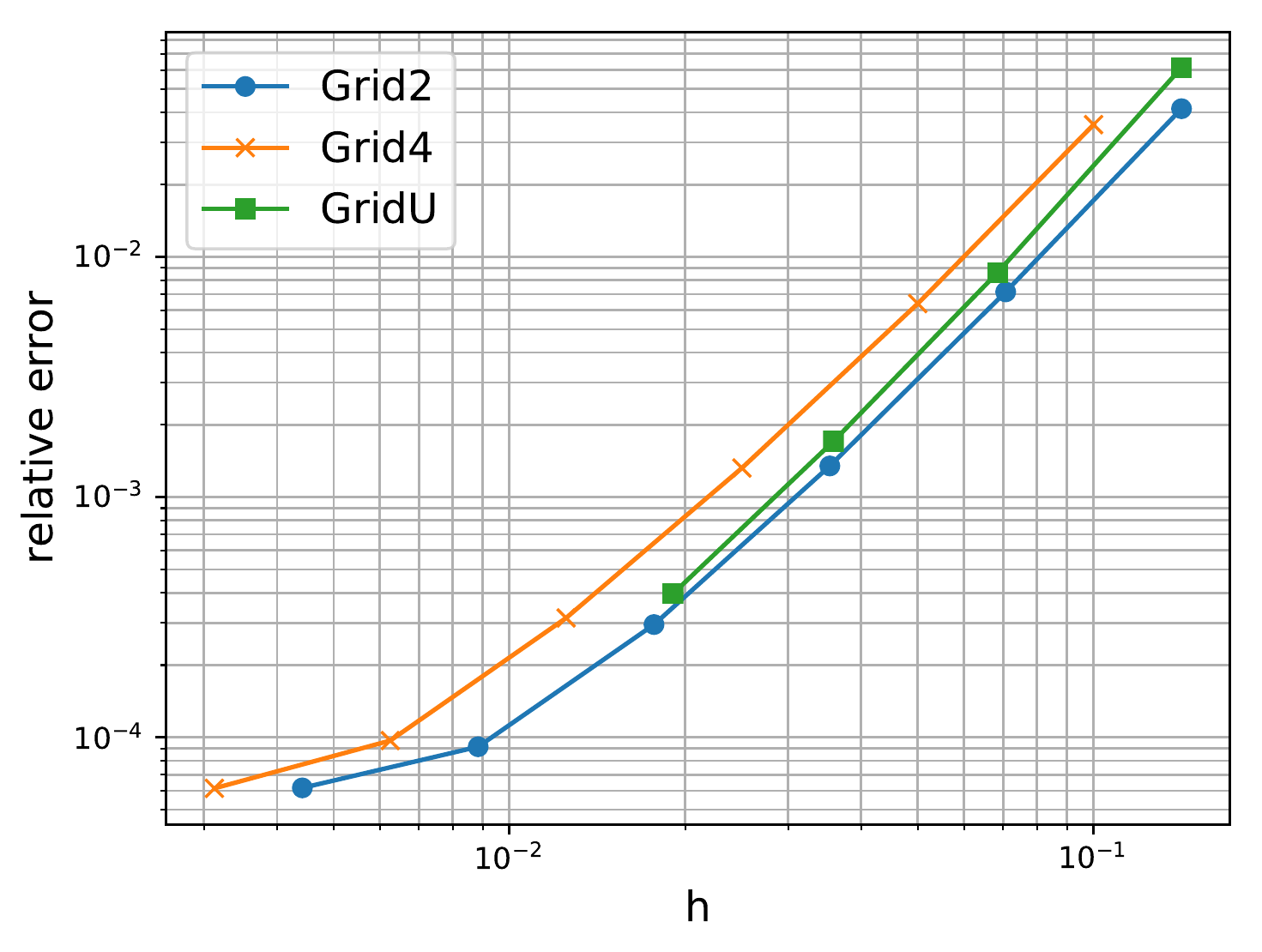}
  \includegraphics[width=0.49\columnwidth]{./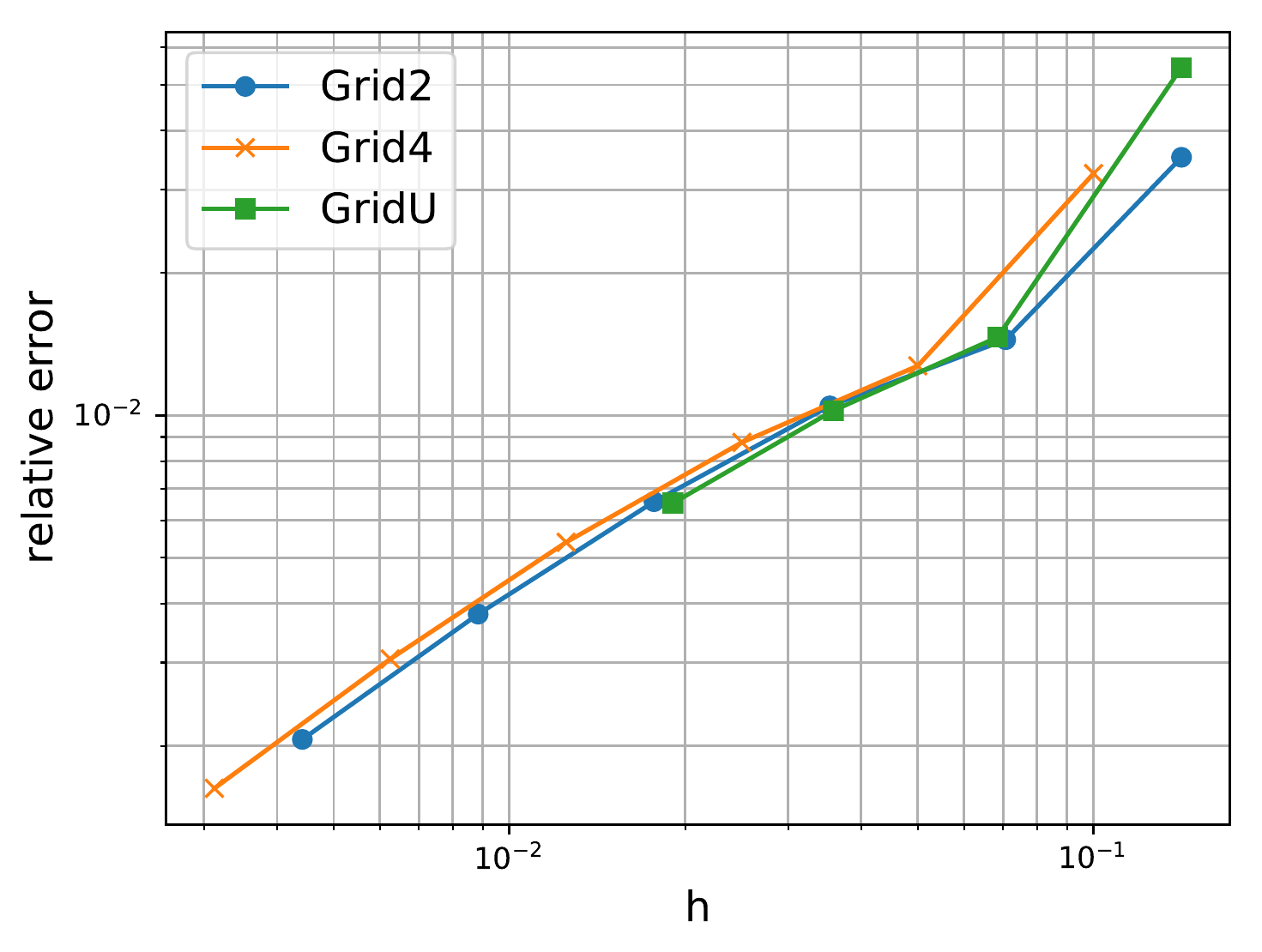}
  \caption{%
  Verification of convergence of the discretization (and the numerical implementation)
  with respect to \(h\)-refinement.
  Left: \(s=1/3\), right: \(s=2/3\).
  The relative error is measured with respect to \(L^2(\Od)\) norm.
  %, and
  %the asymptotic convergence rate is estimated to be \(O(h^{0.84})\) for
  %Grid2 and \(O(h^{0.86})\) for Grid4.%
  }
  \label{fig:hconv}
\end{figure}
\subsubsection{``\(\Gamma\)-convergence'' test}
Another test we perform is that of ``numerical \(\Gamma\)-convergence'',
that is, we try to illustrate Proposition~\ref{prop:limit}.
Namely we put \(s=1/3\),
\(\rho(x,y)= \underline{\rho} +(\overline{\rho}-\underline{\rho}) \exp\{-[(x-m_x)^2+(y-m_y)^2]/\sigma\}\)
with \((m_x,m_y) = (1/2,2/3)\) and \(\sigma = 0.1\), and
\(\kappa\) computed from \(\rho\) using the SIMP model with \(p=2\),
\(u_{\text{ana}}(x,y) = \sin(2\pi x) \sin(\pi y)\), for \(x,y \in \O\), and
zero otherwise.
We compute \(f = -\diver[\rho^p \nabla u_{\text{ana}}]\), and solve a variety
of non-local problems with varying \(\delta\) and \(h\) on Grid2.
The results are summarized in Fig.~\ref{fig:deltaconv}.
The main observation is that the quantity
\(e(\delta)= \|u_{\text{ana}}-\lim_{h\to 0} u_{\delta,h}\|_{L^2(\O)}\),
where \(u_{\delta,h}\) is the numerical solution to the discretized non-local
problem~\eqref{eq:wh}, appears to be decreasing with \(\delta\).
Unsurprisingly, one needs finer resolution meshes to resolve
non-local problems with smaller values of \(\delta\).
\begin{figure}
  \centering
  \includegraphics[width=0.49\columnwidth]{./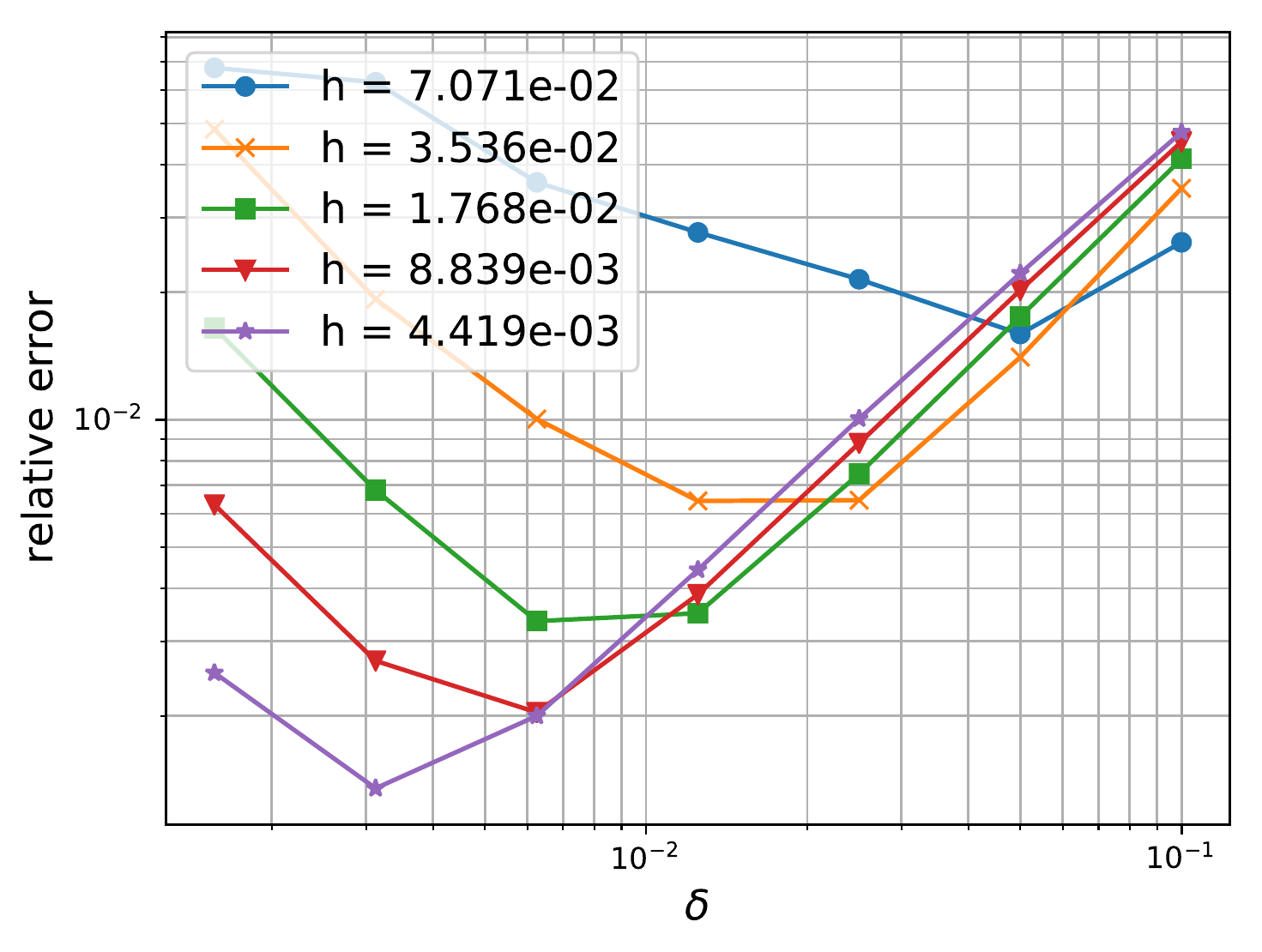}
  \caption{%
  Numerical verification of \(\Gamma\)-convergence.
  The error is measured as \(L^2(\O)\)-norm of the difference between the
  numerical solution of the non-local problem and the analytical solution
  of the local problem.%
  }
  \label{fig:deltaconv}
\end{figure}
\subsection{Solving the optimization problem}

In our implementation we solve a discretized version of the optimization problem~\eqref{eq:min}
with a tiny discrepancy: instead of the constraint \(\int_{\O} \rho(x)\dx \le \gamma|\O|\)
in~\eqref{eq:adm} we have implemented the constraint \(\int_{\Od} \rho(x)\dx \le \gamma|\O|\).
We focus exclusively on compliance minimization, that is, \(J(\rho,u)=\ell(u)\),
and we employ the so-called optimality criterion (OC) scheme for solving
these problems.
This choice is primarily owing to the popularity of OC in the topology
optimization community, and any other gradient-based non-linear constrained
optimization algorithm could be utilized in its place.
Within the OC scheme, given a current material distribution \(\rho_k\) we first
compute the corresponding state \(u_{\rho_k}\) by solving the discretized system~\eqref{eq:wh},
and then the derivatives of non-local compliance \(\compnl\) as
\begin{equation*}
  \begin{aligned}
  \compnl'(\rho_k;\xi) = -\frac{p}{2}\int_{\Od}\int_{\Od}
  [\rho_k^{p/2-1}(x)\rho_k^{p/2}(x')\xi(x)
  +\rho_k^{p/2}(x)\rho_k^{p/2-1}(x')\xi(x')]
  A_{\delta}(|x-x'|)\frac{(u_{\rho_k}(x)-u_{\rho_k}(x'))^2}{|x-x'|^2}\dx'\dx.
 \end{aligned}
\end{equation*}
The new material distribution is defined by a simple pointwise update scheme
\begin{equation*}
  \rho_{k+1} = \pi_{B_k}[\rho_k (-\nabla \compnl(\rho_k)/\lambda_{k+1})^\xi],
\end{equation*}
where \(\pi_{B_k}[\cdot]\) is a projection operator  onto a closed, convex, and
non-empty set \(B_k = \{\,
\rho \in L^2(\Od) \mid \max[\underline{\rho},(1-\eta)\rho_k] \leq \rho \leq
\min[\overline{\rho},(1+\eta)\rho_k]\,\}\), and \(\eta>0\) and \(\xi \in (0,1)\)
are trust-region like and damping parameters, respectively,
and \(\nabla \compnl(\rho_k)\) is \(L^2(\Omega_\delta)\) representation of
the directional derivatives \(\compnl'(\rho_k;\xi)\).
Finally, \(\lambda_{k+1}\) is computed by finding the root of the equation
\(\int_{\Od} \rho_{k+1}(x) \dx = \gamma|\O|\) using, for example, the bisection
algorithm.
We put \(\eta = 0.2\), \(\xi = 0.5\), and stop the algorithm when
\(\|\rho_{k+1}-\rho_k\|_{L^2(\Od)} < 10^{-4}\).
For more details see~\cite{bendose2003topology}.
\subsubsection{Convex case: \(p=1\)}
In the the ``easy'' convex case corresponding to \(p=1\) we can expect that the
optimization algorithm computes approximations to (discretized approximations of)
globally optimal solutions.
The computed local conductivities \(\kappa^{\text{loc}}=\rho\) and the corresponding
states for several values of \(\delta\) are shown in Figure~\ref{fig:p1opt}.
Note that in this case the local compliance minimization problem~\eqref{eq:min_loc}
admits globally optimal solutions, which is also shown in Figure~\ref{fig:p1opt}
with the corresponding state.
The qualitative resemblance between the solutions to the non-local and local problems
is clear from these pictures.
Additionally, we seem to have a quantitative connection between the two problems,
illustrated in Table~\ref{tbl:p1}.

Note that the number of optimization iterations needed to solve the problem
is virtually independent from the value of the non-local horizon \(\delta\) or
size of the mesh \(h\).
Still, each iteration of the optimization algorithm, which requires solving the
discretized state equations, becomes significantly more costly for larger \(\delta>0\).
\begin{table}
  \centering
  \begin{tabular}{rrrr}
    \(\delta\) & \(h\) & \(J^*\) & \(N\)\\
    \hline
    \(0.2\) & \(3.54\cdot 10^{-2}\) & \(9.0727 \cdot 10^{-2}\) & \(26\)\\
    \(0.2\) & \(1.77\cdot 10^{-2}\) & \(9.1778 \cdot 10^{-2}\) & \(27\)\\
    \(0.2\) & \(8.84\cdot 10^{-3}\) & \(9.2408 \cdot 10^{-2}\) & \(27\)\\
    \(0.2\) & \(4.42\cdot 10^{-3}\) & \(9.2798 \cdot 10^{-2}\) & \(27\)\\
    \hline
    \(0.1\) & \(3.54\cdot 10^{-2}\) & \(7.8627 \cdot 10^{-2}\) & \(28\)\\
    \(0.1\) & \(1.77\cdot 10^{-2}\) & \(7.9479 \cdot 10^{-2}\) & \(32\)\\
    \(0.1\) & \(8.84\cdot 10^{-3}\) & \(7.9930 \cdot 10^{-2}\) & \(32\)\\
    \(0.1\) & \(4.42\cdot 10^{-3}\) & \(8.0218 \cdot 10^{-2}\) & \(32\)\\
    \hline
    \(0.05\) & \(3.54\cdot 10^{-2}\) & \(7.3251 \cdot 10^{-2}\) & \(29\)\\
    \(0.05\) & \(1.77\cdot 10^{-2}\) & \(7.3896 \cdot 10^{-2}\) & \(34\)\\
    \(0.05\) & \(8.84\cdot 10^{-3}\) & \(7.4276 \cdot 10^{-2}\) & \(36\)\\
    \(0.05\) & \(4.42\cdot 10^{-3}\) & \(7.4484 \cdot 10^{-2}\) & \(35\)\\
    \hline
    \(0\) & \(3.13\cdot 10^{-3}\) & \(6.9460 \cdot 10^{-2}\) & \(38\)\\
    \hline
  \end{tabular}
  \caption{Summary of the results for the convex case \(p=1\).  \(J^*\): optimal value;
  \(N\): number of OC iterations.  \(\delta=0\) corresponds to the limiting
  local model (solved using a separate code) which admits optimal solutions
  in the convex case \(p=1\), see~\cite{cea1970example,bendose2003topology,allaire2012shape}.}
  \label{tbl:p1}
\end{table}
\begin{figure}
  \centering
  \includegraphics[width=0.32\columnwidth]{./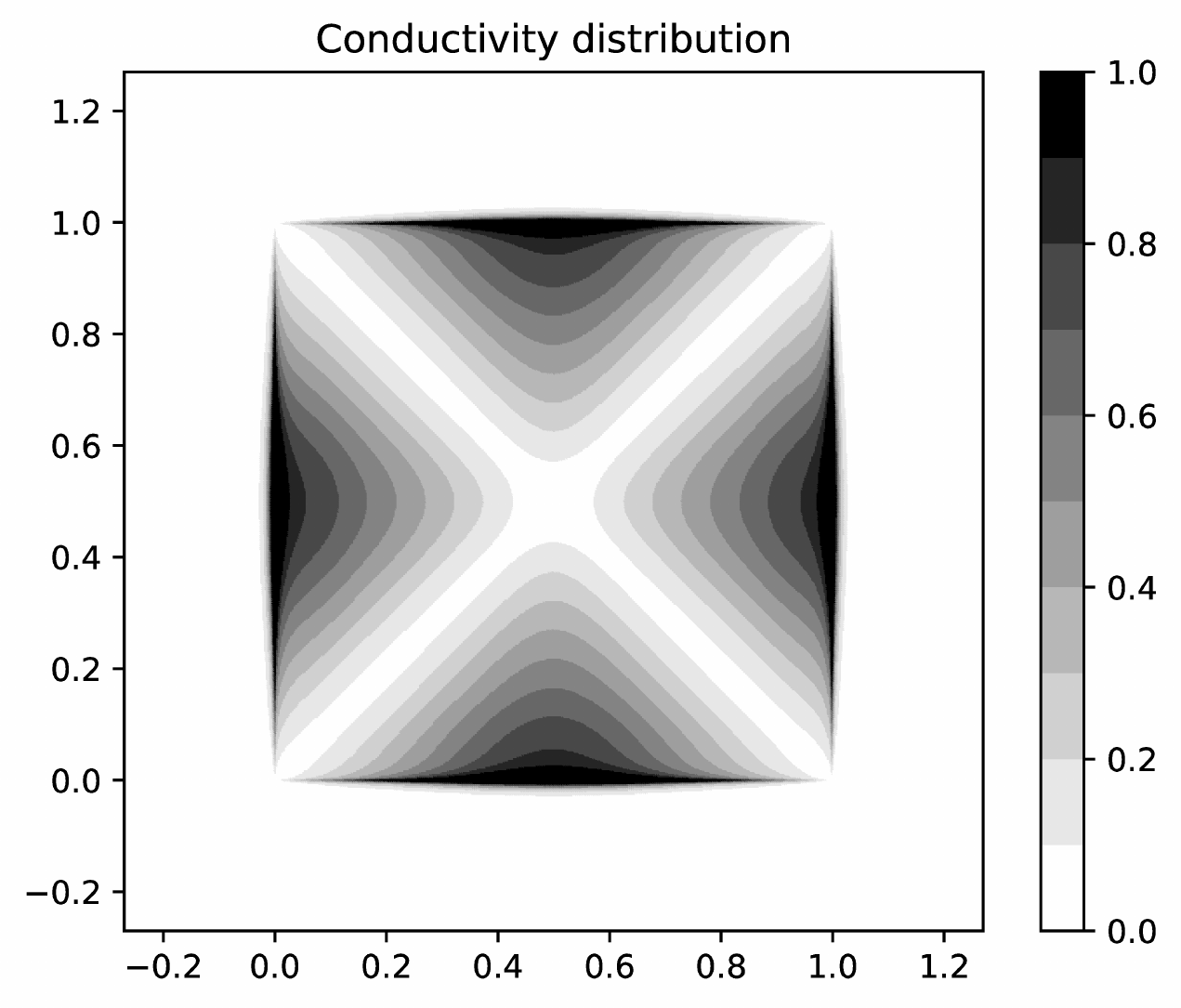}
  \includegraphics[width=0.32\columnwidth]{./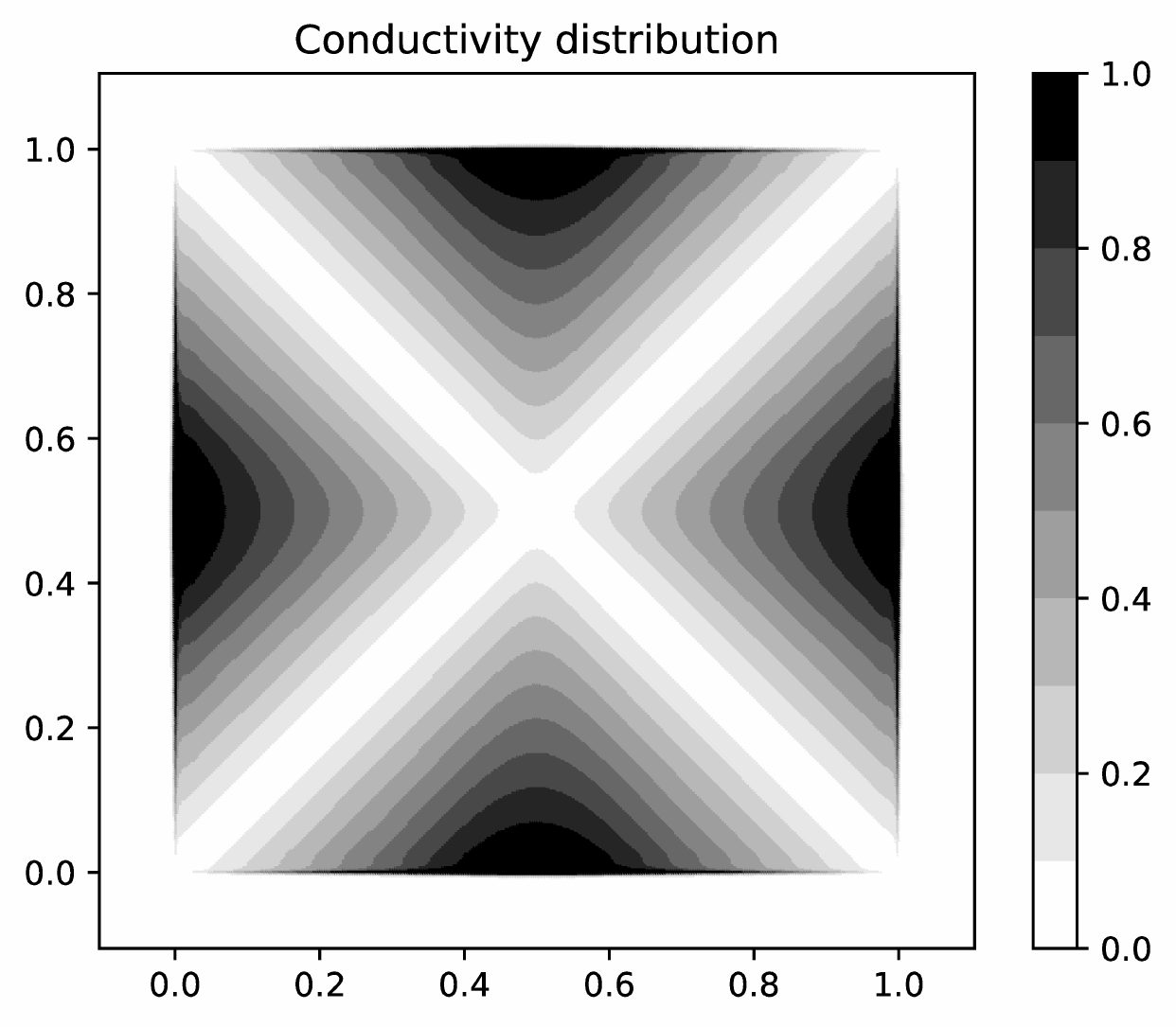}
  \includegraphics[width=0.32\columnwidth]{./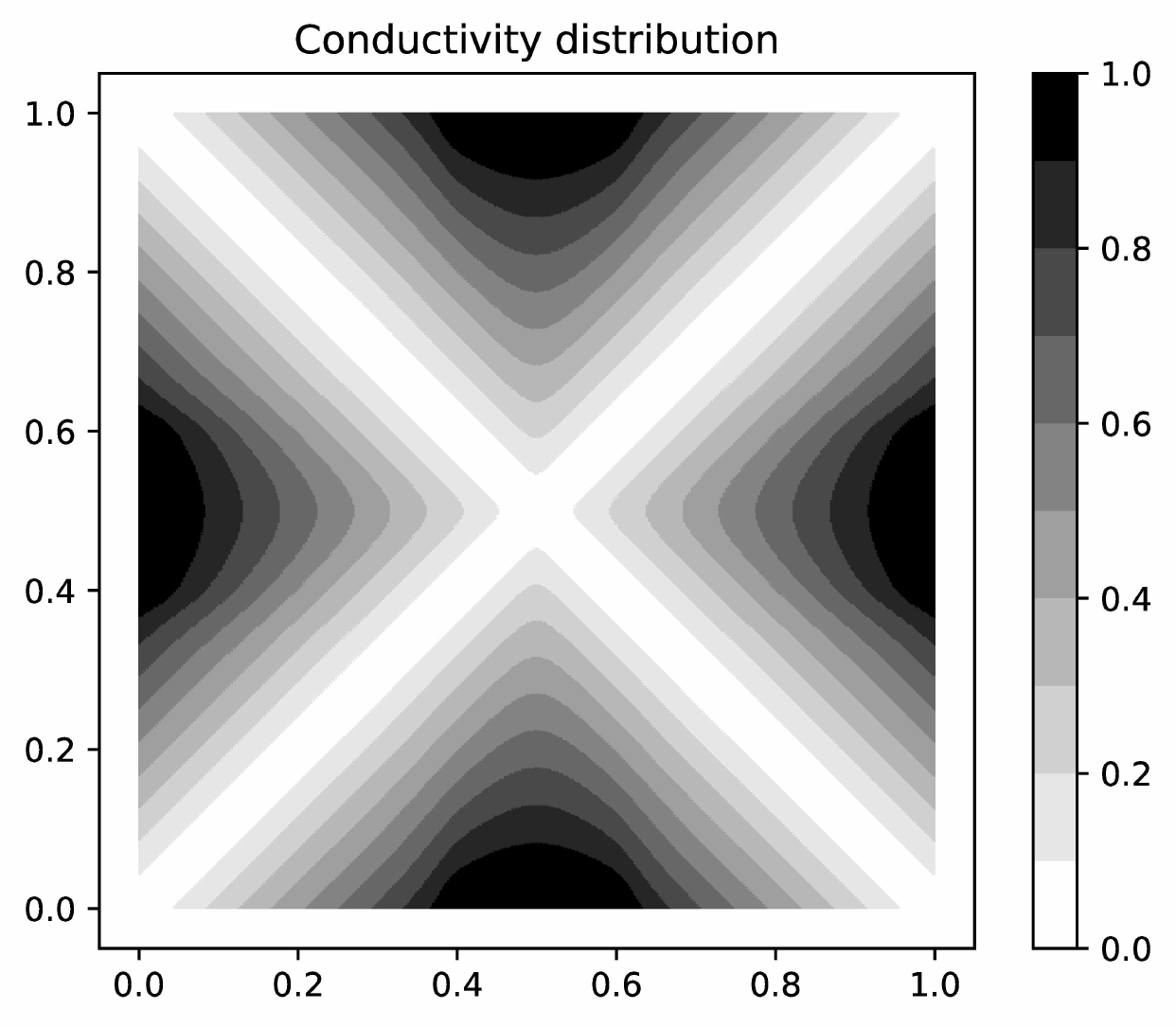}\\
  \includegraphics[width=0.32\columnwidth]{./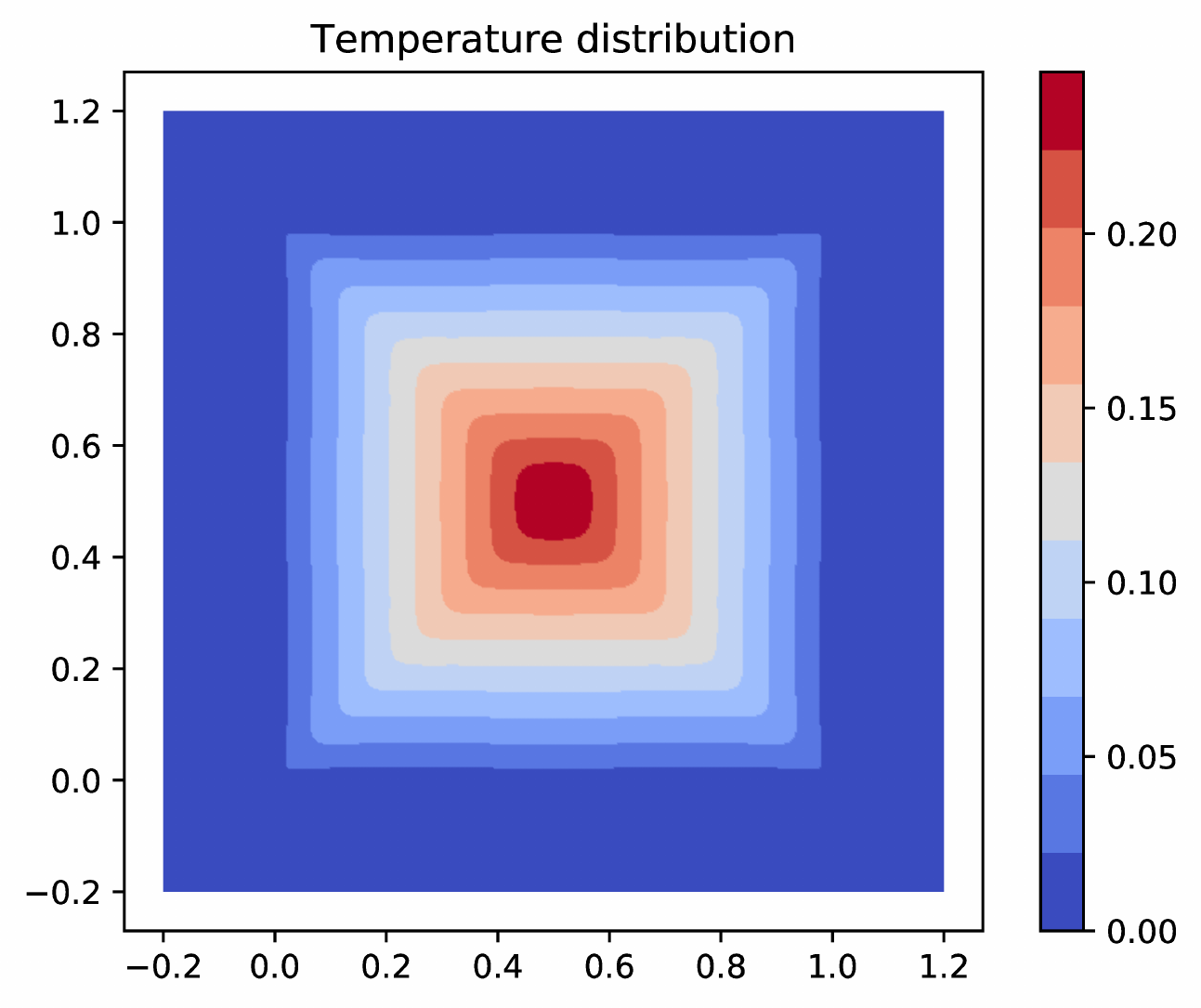}
  \includegraphics[width=0.32\columnwidth]{./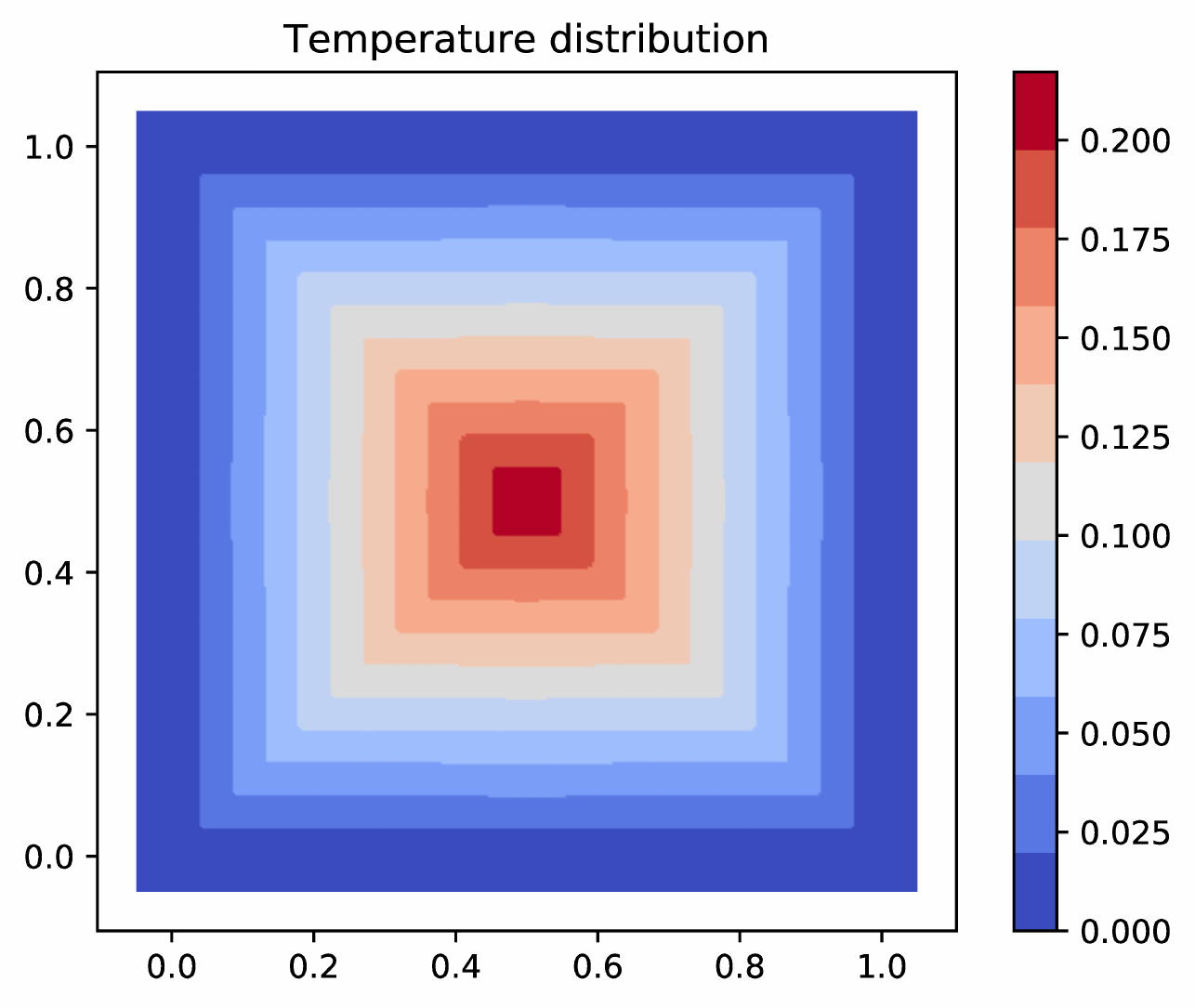}
  \includegraphics[width=0.32\columnwidth]{./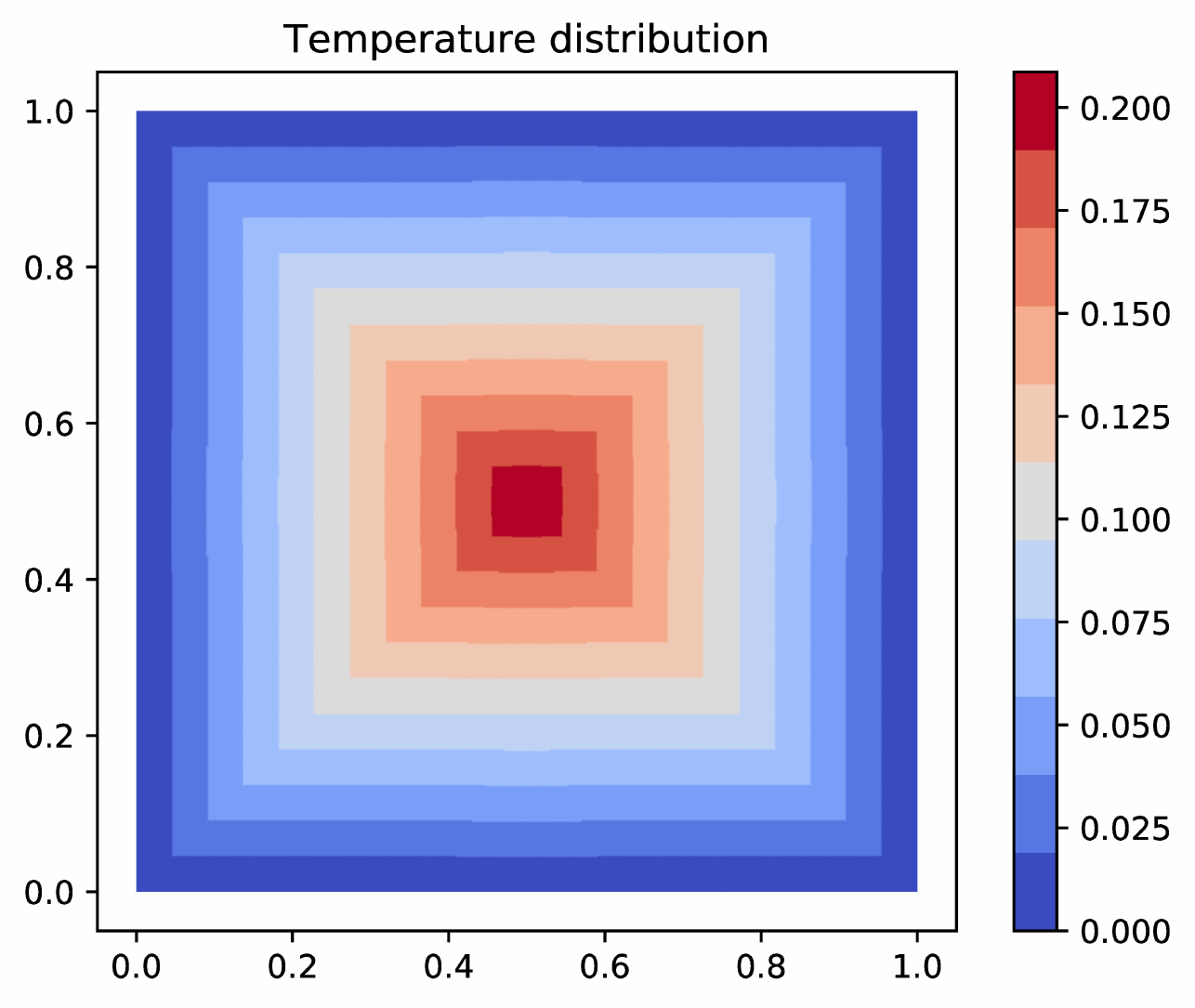}
  \caption{Pictures of the computed optimal designs for \(p=1\).
  Top line: optimal design (conductivity distribution \(\kappa^{\text{loc}}=\rho\)), bottom line:
  corresponding optimal states.
  From left to right: \(\delta=0.2\), \(\delta=0.05\), \(\delta=0\) (local problem).
  Note the slightly different color range and the presence of \(\Gnl\) for the nonlocal
  problems.}
  \label{fig:p1opt}
\end{figure}
\subsubsection{Nonconvex case: \(p=2\)}
A significantly more interesting case, in view of the very different behaviour of the
non-local and local problems, corresponds to the non-convex optimization problem~\eqref{eq:min}
with \(p=2\).
In this case, for the local problem the intermediate values of the conductivity are
effectively penalized by the underlying physics and one can expect the computed
optimized conductivity distribitions to be of ``bang-bang'' structure,
assuming either the lowest or the highest possible values of conductivity everywhere.
Note also that in this case the local optimization problem~\eqref{eq:min_loc} does not admit
optimal solutions, and therefore we have no local solution to compare with.

The computed local conductivities \(\kappa^{\text{loc}}=\rho^p\) and the corresponding
states for several values of \(\delta\) are shown in Figure~\ref{fig:p2opt}.
They agree with our expectations.
One can also note that for smaller \(\delta\) the computed conductivity distributions display
smaller features and progressively more oscillatory character.
Indeed, in the case of the local problem, minimizing sequences consist of conductivities,
which are locally highly oscillatory (periodic) and can be mathematically understood
as converging to composite materials, see~\cite{bendose2003topology,allaire2012shape}.
Quantitative information about these solutions is shown in Table~\ref{tbl:p2}.
Note that for smaller \(\delta\) the optimization algorithm requires significantly more
iterations to converge (which does not mean that it is more difficult to solve, as
each iteration is less computationally expensive in this case).

The non-existence of optimal solutions for the local compliance minimization
problem manifests itself numerically as ``mesh-dependence'' of optimal designs,
where progressively more oscillatory conductivity distributions are encountered
as one refines the computational mesh.
This is often used as an ``engineering'' test of existence of optimal solutions
for a given problem.
The optimal conductivity distributions corresponding to \(\delta=0.2\) and
a range of discretization levels is shown in Figure~\ref{fig:p2meshindep}.
Indeed, one can recognize the sequence of convergent shapes as the discretization
gets finer and finer.
Again, this example works by pure coincidence: indeed as the problem is non-convex
the optimization algorithm may end up in different local minima at different discretization
levels, and indeed we have observed such behavior for other values of \(\delta\).

Finally, we perform a ``cross-check'' of the computed non-local designs by evaluating
all computed designs for all values of \(\delta\).
We expect (but of course cannot guarantee this, as we can only hope to find
locally optimal solutions and not global ones) that the distribution optimized
for a specific value of \(\delta\) would outperform other designs computed for
different values of \(\delta\).
The results of this test are shown in Table~\ref{tbl:p2_crosscheck}.
Indeed, our expectations are confirmed: within each row the diagonal element
is the smallest one.
\begin{figure}
  \centering
  \includegraphics[width=0.32\columnwidth]{./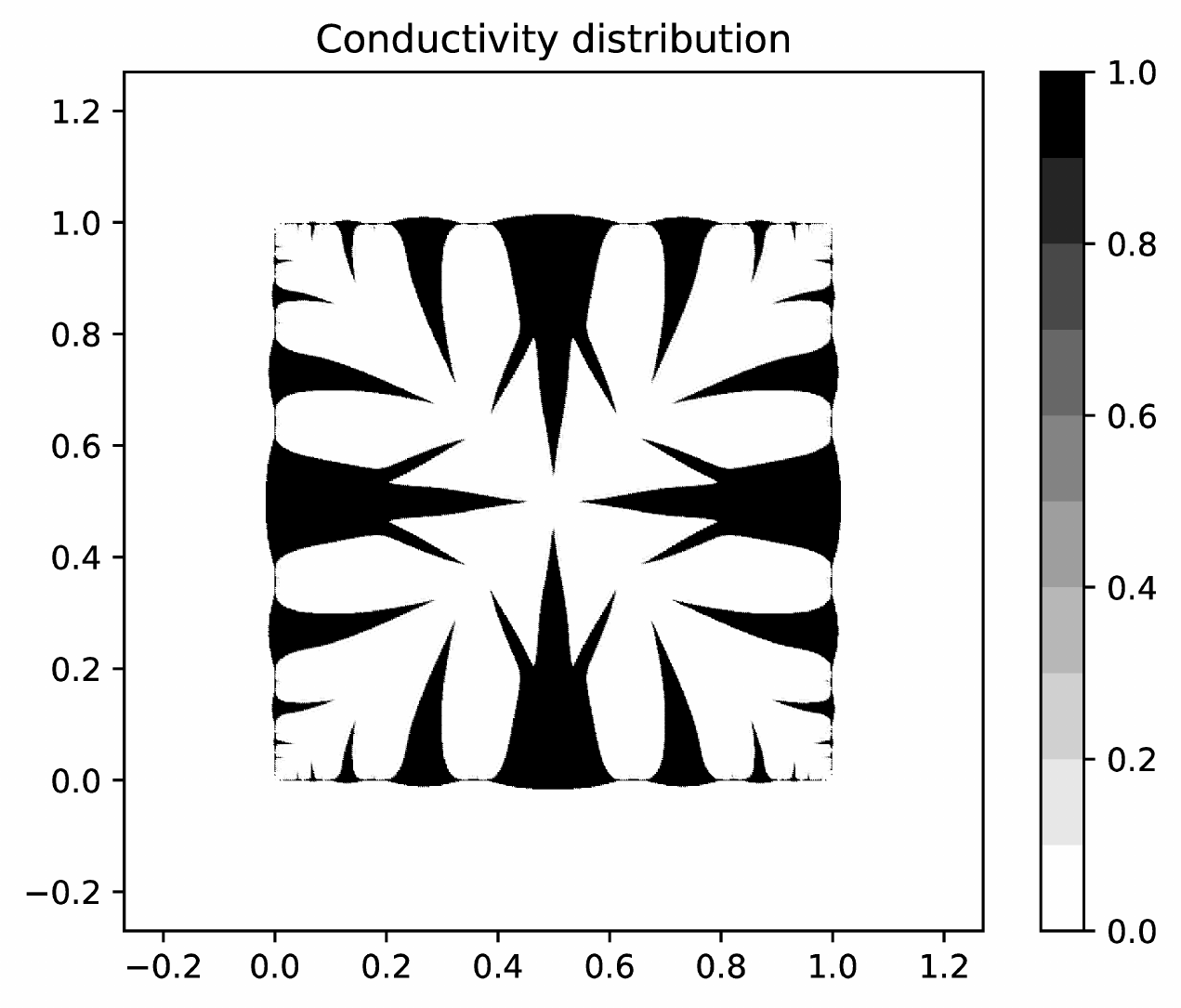}
  \includegraphics[width=0.32\columnwidth]{./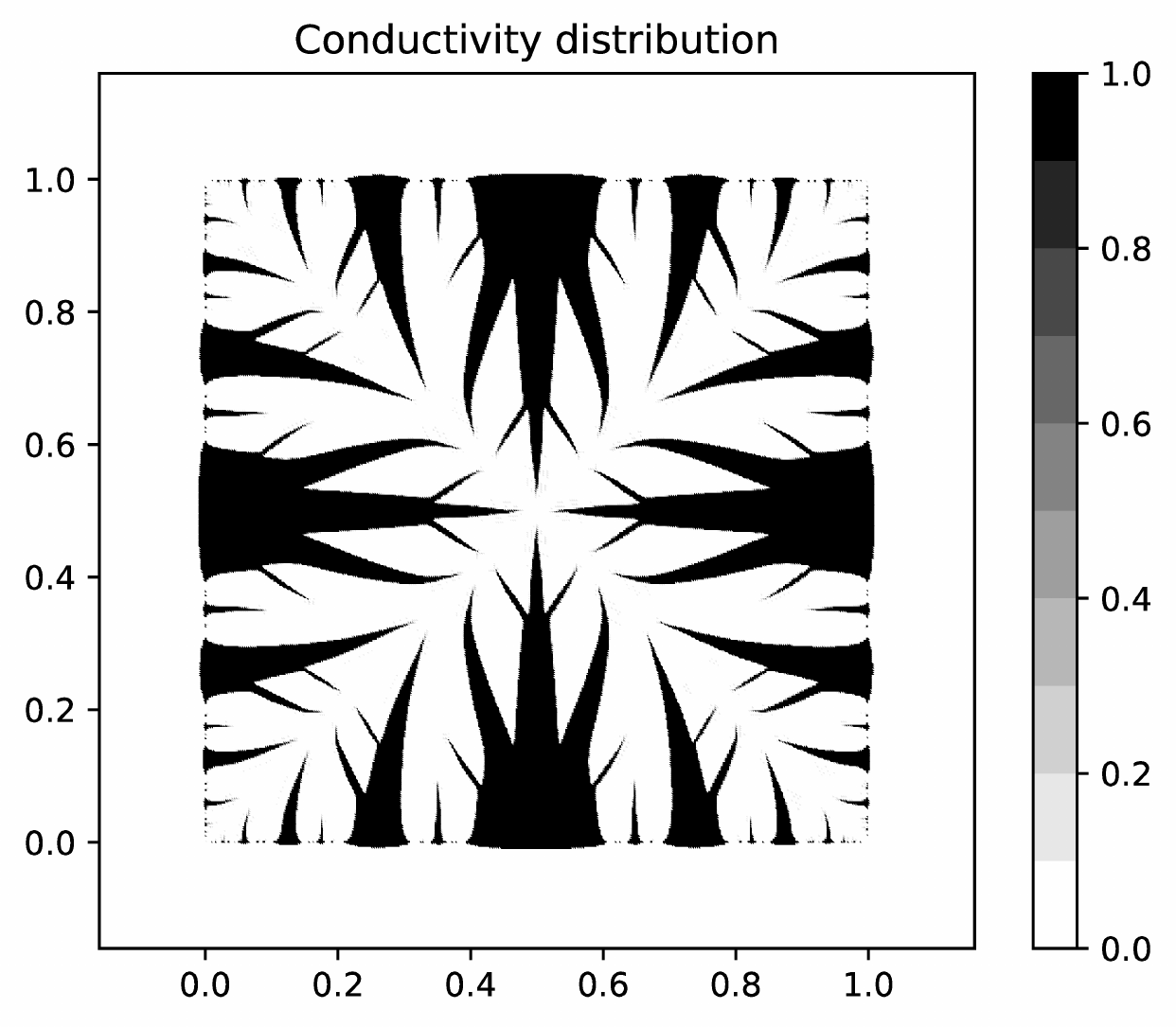}
  \includegraphics[width=0.32\columnwidth]{./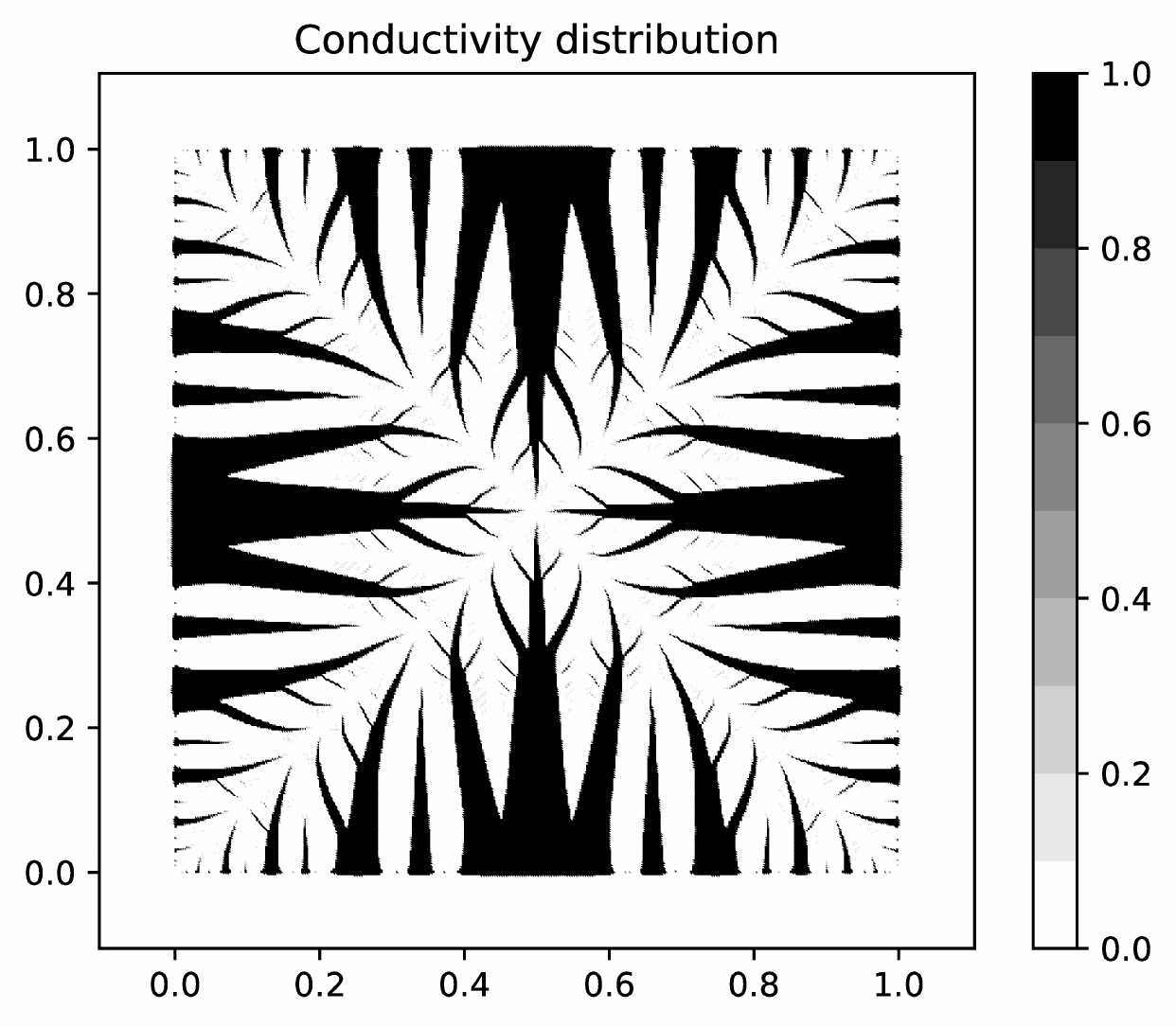}\\
  \includegraphics[width=0.32\columnwidth]{./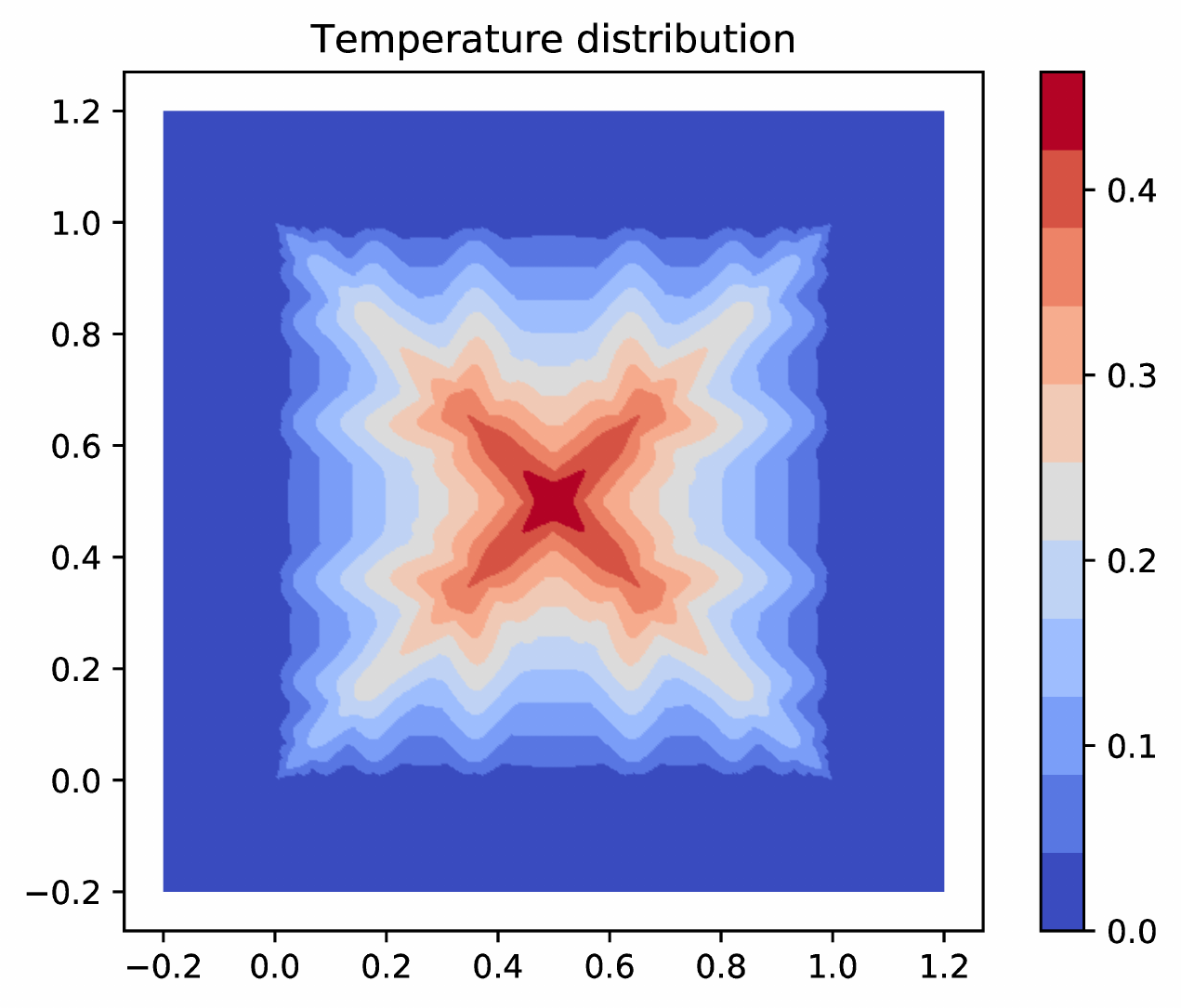}
  \includegraphics[width=0.32\columnwidth]{./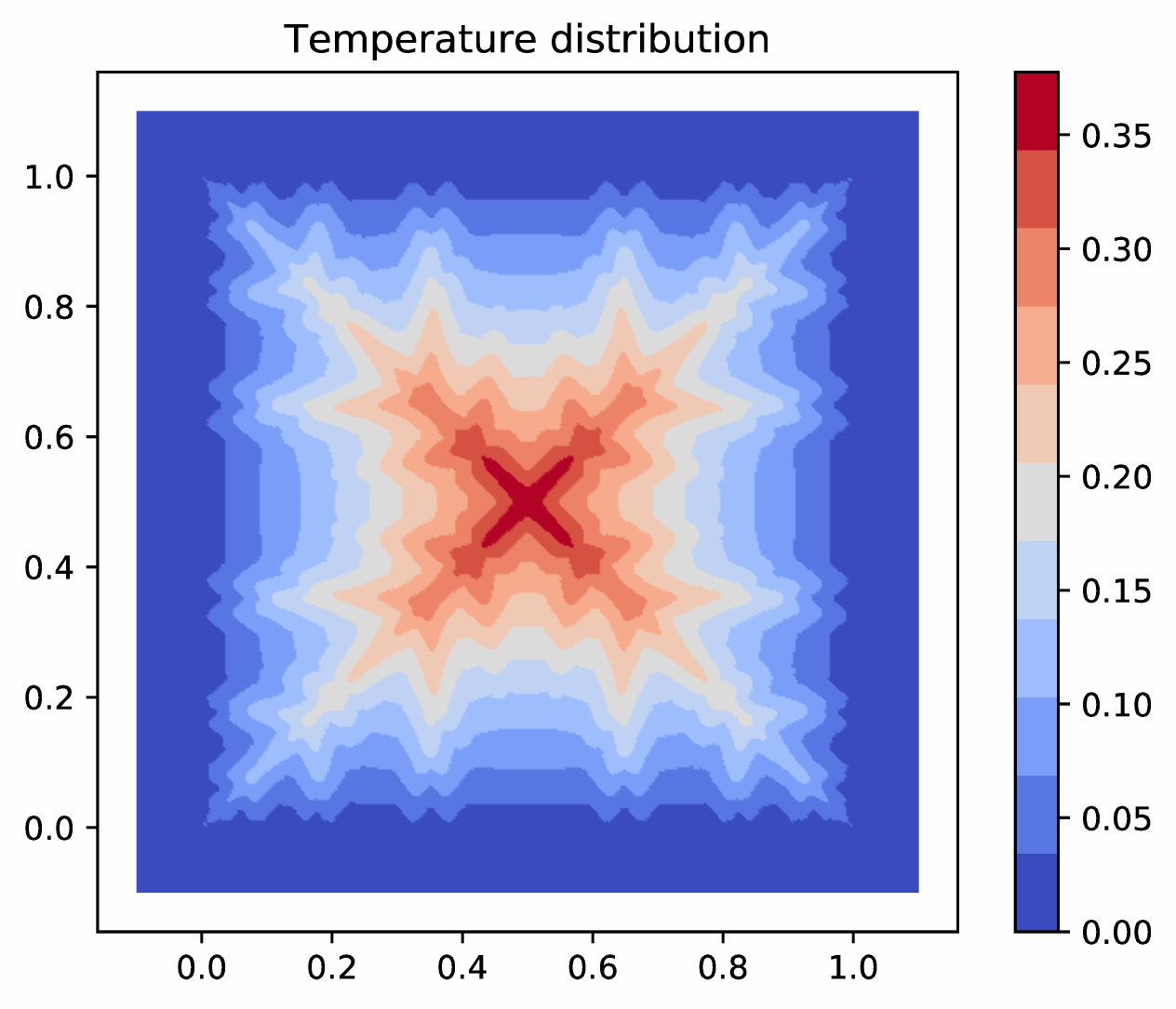}
  \includegraphics[width=0.32\columnwidth]{./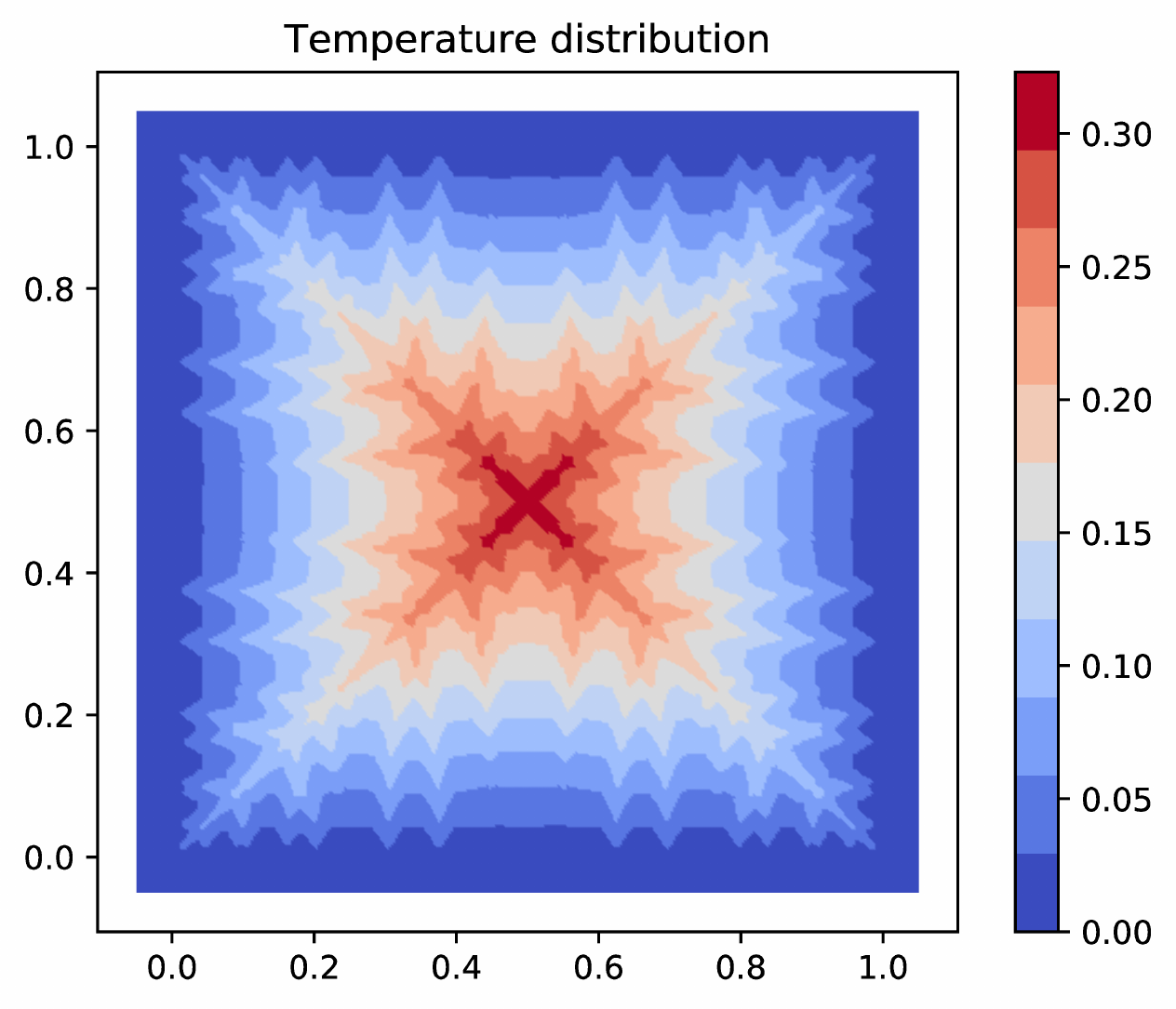}\\
  \caption{Pictures of the computed optimal designs for \(p=2\).
  Top line: optimal design (conductivity distribution \(\kappa^{\text{loc}}=\rho^2\)), bottom line:
  corresponding optimal states.
  From left to right: \(\delta=0.2\), \(\delta=0.1\), \(\delta=0.05\).
  Note the slightly different color range and the presence of \(\Gnl\) for the nonlocal
  problems.}
  \label{fig:p2opt}
\end{figure}
\begin{figure}
  \centering
  \includegraphics[width=0.4\columnwidth]{./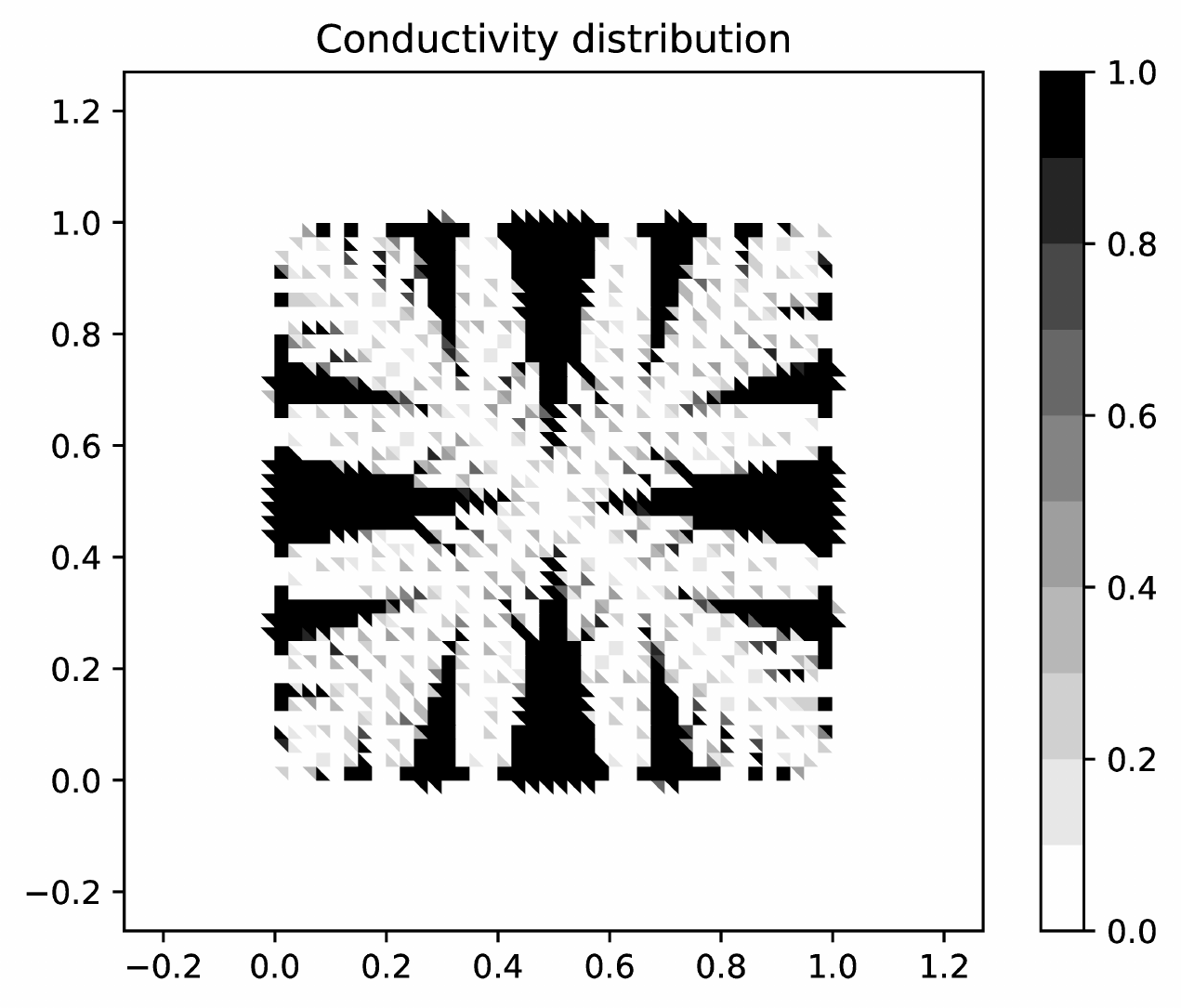}
  \includegraphics[width=0.4\columnwidth]{./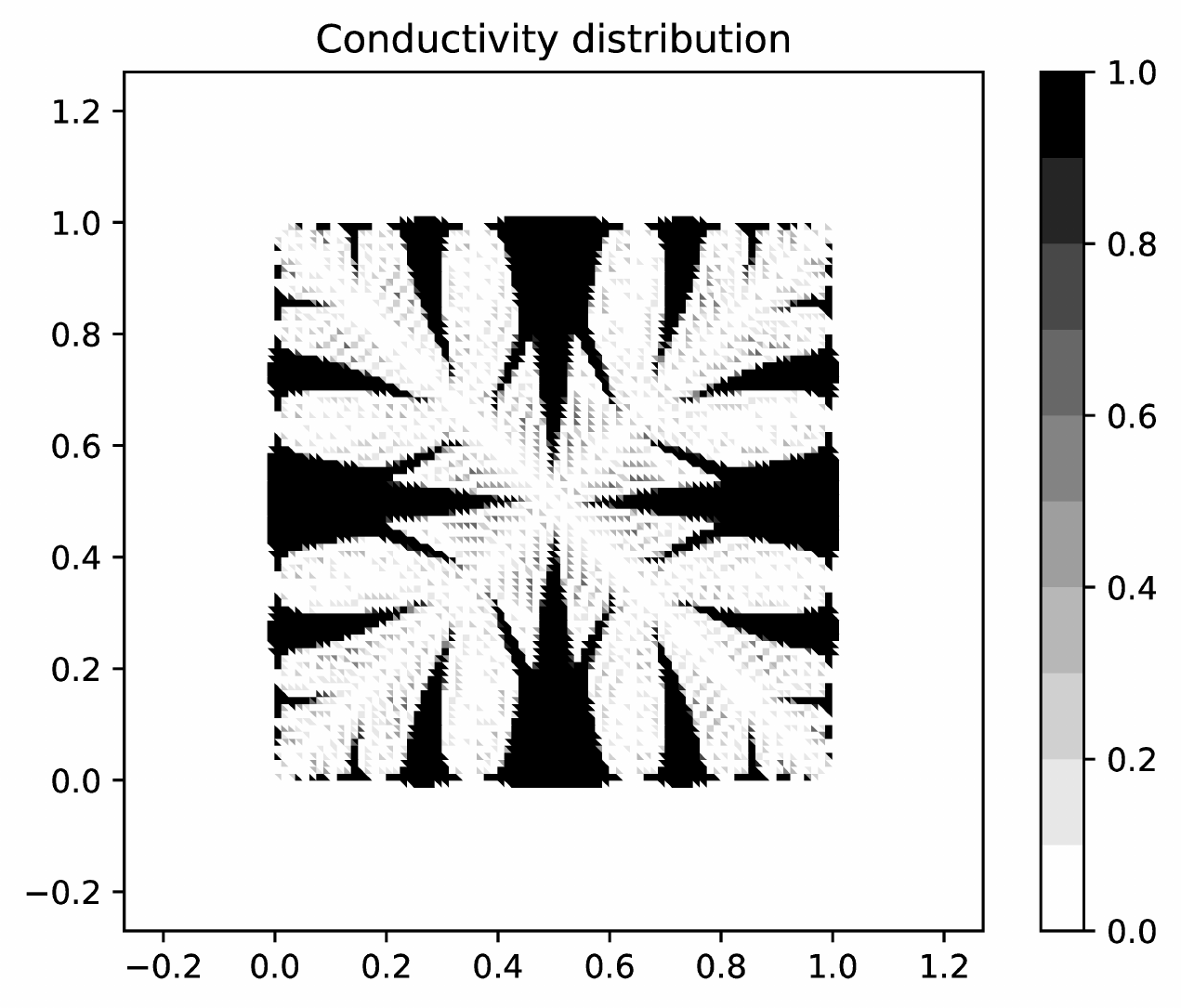}\\
  \includegraphics[width=0.4\columnwidth]{./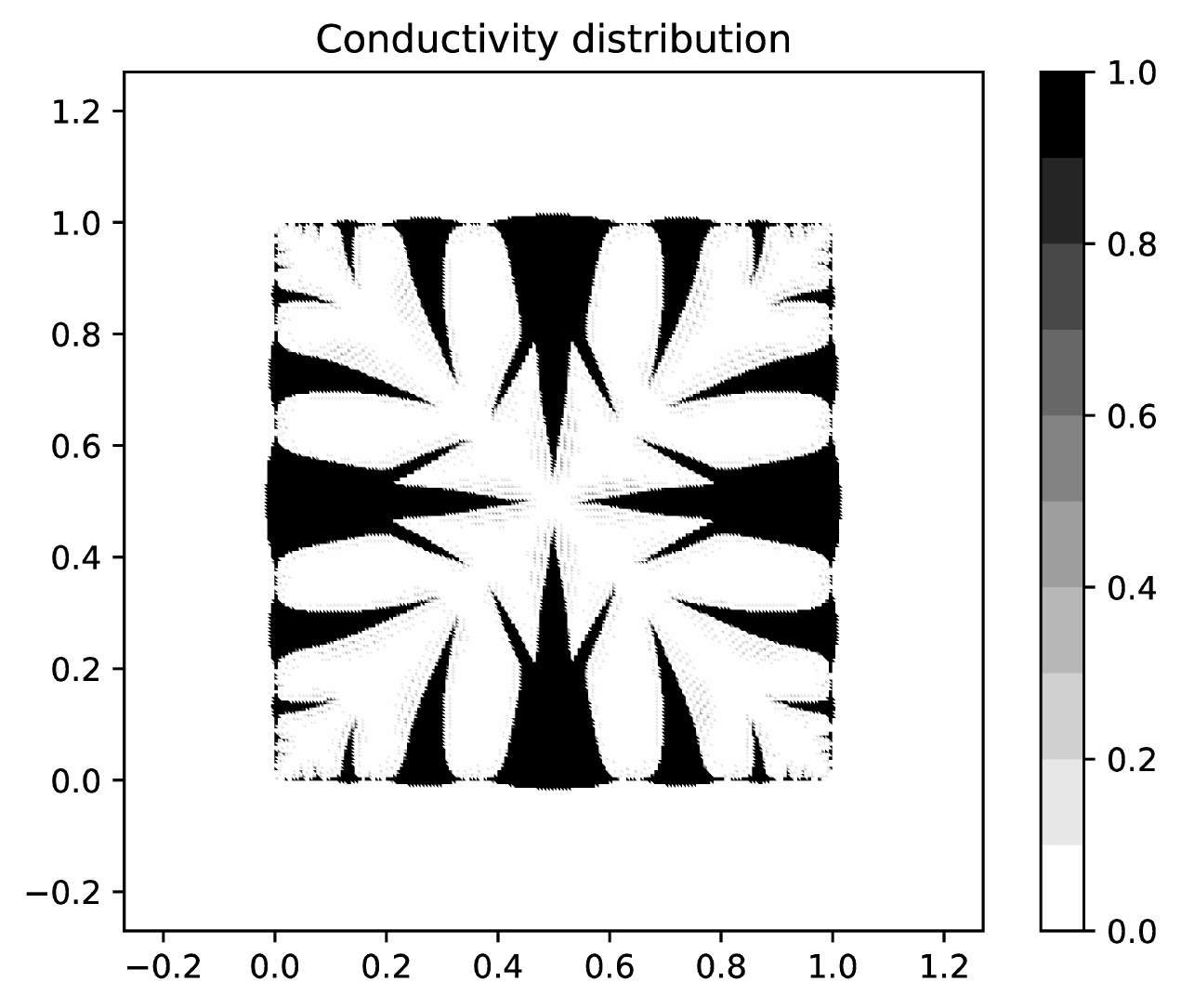}
  \includegraphics[width=0.4\columnwidth]{./NUMERICS/nloc/simp_nl_02_320_rho_01139.pdf}\\
  \caption{``Mesh-independence'' test (owing to the non-convexity of the problem we
  cannot guarantee that the optimization algorithm converges to the ``same'' local minimum
  on different discretizations).
  Conductivity distributions \(\kappa^{\text{loc}}=\rho^2\) are shown.
  We put \(p=2\), \(\delta=0.2\), and \(h \in \{3.54\cdot 10^{-2},1.77\cdot 10^{-2},
  8.84\cdot 10^{-3},4.42\cdot 10^{-3}\}\).%
  }
  \label{fig:p2meshindep}
\end{figure}
\begin{table}
  \centering
  \begin{tabular}{rrrr}
    \(\delta\) & \(h\) & \(J^*\) & \(N\)\\
    \hline
    \(0.2\) & \(3.54\cdot 10^{-2}\) & \(1.6902 \cdot 10^{-1}\) & \(1080\)\\
    \(0.2\) & \(1.77\cdot 10^{-2}\) & \(1.7327 \cdot 10^{-1}\) & \(1062\)\\
    \(0.2\) & \(8.84\cdot 10^{-3}\) & \(1.7516 \cdot 10^{-1}\) & \(1300\)\\
    \(0.2\) & \(4.42\cdot 10^{-3}\) & \(1.7613 \cdot 10^{-1}\) & \(1139\) \\
    \hline
    \(0.1\) & \(3.54\cdot 10^{-2}\) & \(1.2857 \cdot 10^{-1}\) & \(514\)\\
    \(0.1\) & \(1.77\cdot 10^{-2}\) & \(1.3242 \cdot 10^{-1}\) & \(1116\)\\
    \(0.1\) & \(8.84\cdot 10^{-3}\) & \(1.3480 \cdot 10^{-1}\) & \(1395\)\\
    \(0.1\) & \(4.42\cdot 10^{-3}\) & \(1.3560 \cdot 10^{-1}\) & \(1521\)\\
    \hline
    \(0.05\) & \(3.54\cdot 10^{-2}\) & \(1.0721 \cdot 10^{-1}\) & \(261\)\\
    \(0.05\) & \(1.77\cdot 10^{-2}\) & \(1.1066 \cdot 10^{-1}\) & \(790\)\\
    \(0.05\) & \(8.84\cdot 10^{-3}\) & \(1.1139 \cdot 10^{-1}\) & \(1358\)\\
    \(0.05\) & \(4.42\cdot 10^{-3}\) & \(1.1185 \cdot 10^{-1}\) & \(2394\) \\
    \hline
  \end{tabular}
  \caption{Summary of the results for the non-convex case \(p=2\).  \(J^*\): computed objective value;
  \(N\): number of OC iterations.}
  \label{tbl:p2}
\end{table}
\begin{table}
  \centering
  \begin{tabular}{rrrr}
    Original \(\delta\) & \(J\) for \(\delta=0.05\) & \(J\) for \(\delta=0.1\) & \(J\) for \(\delta=0.2\)\\
    \hline
    \(0.05\) &  \(\mathbf{1.1185\cdot 10^{-1}}\) & \(1.1724\cdot 10^{-1}\) & \(1.3225\cdot 10^{-1}\)\\
    \(0.1\)  & \(1.3954\cdot 10^{-1}\) & \(\mathbf{1.3560\cdot 10^{-1}}\) & \(1.4172\cdot 10^{-1}\)\\
    \(0.2\)  & \(1.9678\cdot 10^{-1}\) & \(1.8213\cdot 10^{-1}\) & \(\mathbf{1.7613\cdot 10^{-1}}\)\\
    \hline
  \end{tabular}
  \caption{``Cross-checking'' of the computed designs, \(h=4.42\cdot 10^{-3}\) in all cases.}
  \label{tbl:p2_crosscheck}
\end{table}

%
%
%\section{Conclusions}
%

\section*{Acknowledgements}
AE's research is financially supported by the Villum Fonden through the Villum Investigator Project InnoTop.  The work of JCB is funded by FEDER EU and Ministerio de Econom{\'i}a y Competitividad (Spain) through grant MTM2017-83740-P.

\bibliography{NonlocalOD}

\end{document}